\theoremstyle{definition}
\newtheorem{theorem}{Theorem}[section]
\newtheorem*{theorem*}{Theorem}
\newtheorem{lemma}[theorem]{Lemma}
\newtheorem{corollary}[theorem]{Corollary}
\newtheorem{proposition}[theorem]{Proposition}
\theoremstyle{definition}
\newtheorem{definition}[theorem]{Definition}
\newtheorem{example}[theorem]{Example}
\theoremstyle{remark}
\newtheorem{remark}[theorem]{Remark}
\numberwithin{equation}{section}
\numberwithin{equation}{section}
\begin{document}

\raggedbottom
\title{Twist Regions and  Coefficients Stability of the Colored Jones Polynomial}

\author{Mohamed Elhamdadi}
\address{Department of Mathematics, University of South Florida, 
Tampa, FL 33647 USA}
\email{emohamed@mail.usf.edu }

\author{Mustafa Hajij}
\address{Department of Mathematics, University of South Florida, 
Tampa, FL 33647 USA}
\email{mhajij@usf.edu}

\author{Masahico Saito}
\address{Department of Mathematics, University of South Florida, 
Tampa, FL 33647 USA}
\email{saito@usf.edu }




\begin{abstract}
We prove that the coefficients of the colored Jones polynomial of alternating links stabilize under increasing the number of twists in the twist regions of the link diagram. This gives us an infinite family of $q$-power series derived from the colored Jones polynomial parametrized by the color and the twist regions of the alternating link diagram. 
\end{abstract}
 \maketitle

 \tableofcontents
\section{Introduction}
The colored Jones polynomial $J_{n,L}(q)$ of a link $L$ is a sequence of Laurent polynomials with integer coefficients in one variable. The label $n$ stands for the coloring. The polynomial $J_{2,L}(q)$ is the original Jones polynomial. The stability of the highest and lowest coefficients of the colored Jones polynomial has been studied extensively in the last decade. It was conjectured in \cite{DL} that for an alternating link $L$ the lowest $n$ coefficients of $J_{n,L}(q)$ agree with the first $n$ coefficients of $J_{n+1,L}(q)$. This gives a well-defined $q$-series invariant called the tail of the colored Jones polynomial. The term head is used for the highest terms of the colored Jones polynomial. This conjecture was proven by Armond \cite{Armond} and independently by Garoufalidis and L\^e \cite{GL} where higher stability were also shown. This work was extended to quantum spin networks in \cite{Hajij2} and all links in \cite{CL1,CL2}. Dasbach and Lin showed that the head and tail of alternating links contain geometric information that can be used as bounds for the hyperbolic volume of a non-torus alternating links \cite{DL}. This work was extended by Futer, Kalfagianni and Purcell to a larger class of links \cite{Futer1,Futer2}. The tail of the colored Jones polynomial has interesting connections with number theory. It turns out that for many knots with small crossing numbers the tail of the colored Jones polynomial is equal to theta and false theta functions \cite{Cody Oliver, Hajij2}. Moreover, infinite families of classical and new Ramanujan type $q$-series has been recently discovered and recovered using techniques that are related to the tail \cite{CodyOliver,GL,Hajij1,Hajij2,Hajij4}. The tail of the colored Jones polynomial has also been studied using classical $q$-series techniques \cite{GL,Osburn2,KO}. Several connections between twist regions of a knot diagram and the colored Jones polynomial has been made. In \cite{CK1} the authors proved that Mahler measures of the Jones polynomial and the colored Jones polynomial behave like the hyperbolic volume under Dehn surgery and the corresponding full twists. Specifically, they showed that the Mahler measure of the Jones and colored Jones polynomial converges under twisting on any number of strands. A generlized Temperley-Lieb algebra was given in \cite{Cai} to provide an alternative proof of this result. A twisting formula of the Kauffman bracket was used in \cite{CK2} to study the hyperbolic volume of certain families of alternating links. In \cite{CodyOliver} it was proved that the tail of the colored Jones polynomial is invariant under changing the number of twists in any maximal negative twist region.

   In this paper we study the following aspects of the stability of the colored Jones polynomial of alternating links under twists. For each color we study the stability of the coefficients of the colored Jones polynomial as we change the number of crossings in multiple twist regions. The rate of stabiliy is defined and studied as a function of the number of crossings in twists regions and the color. This stability behaviour gives us an infinite family of $q$-power series associated with the colored Jones polynomial of alternating links and parametrized by the color and the twist regions of the link diagrams.  We start by giving the following example to show the behavior we will study. The following computataions were performed in Mathematica package \cite{mathematica} and the formula of the colored Jones polynomial were obtained using the techniques of \cite{MV}.  
 
\begin{example}
\label{main example}
Let $P(c_1,c_2,c_3)$ be the pretzel link presented in Figure \ref{pretzel}.

\begin{figure}[H]
  \centering
   {\includegraphics[scale=0.14]{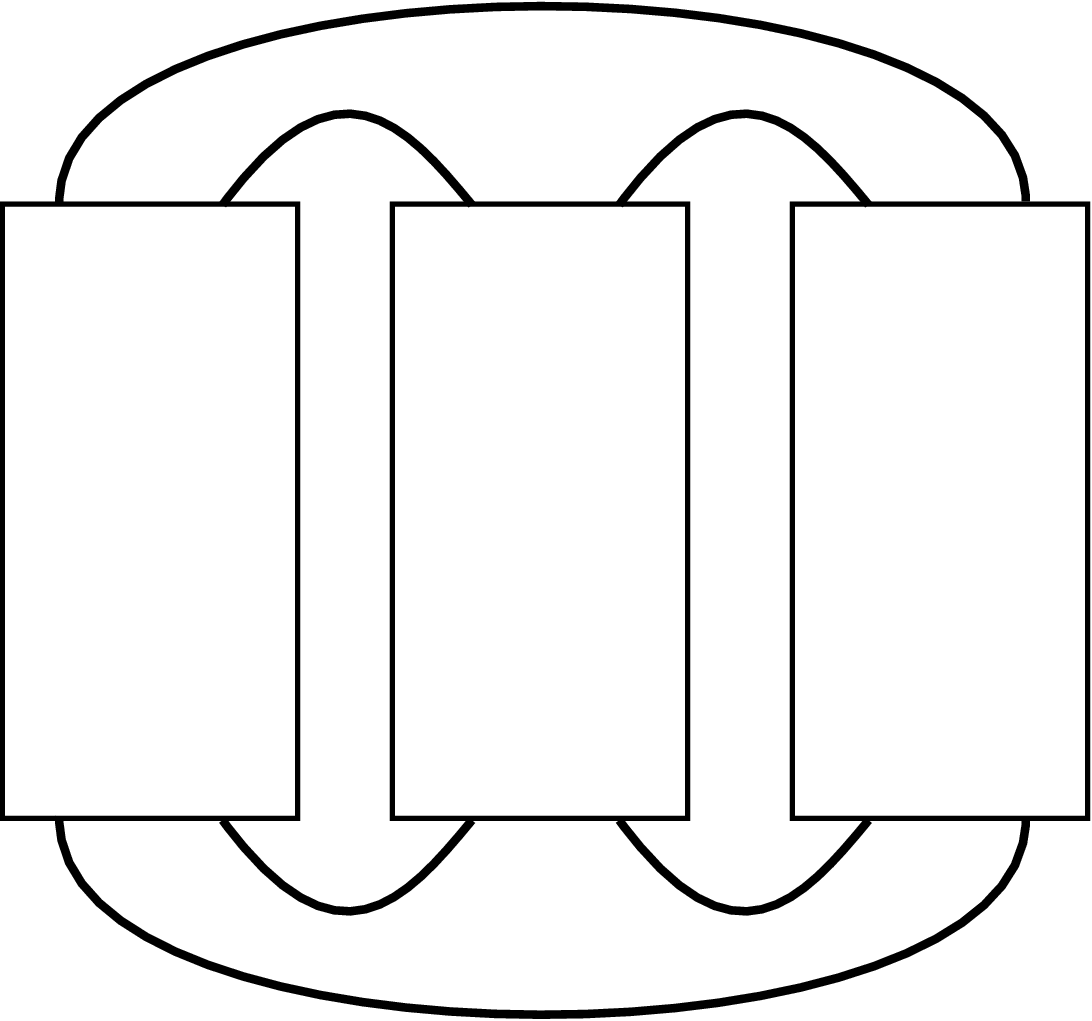}
    \put(-70,33){$c_1$}
          \put(-41,33){$c_2$}
          \put(-15,33){$c_3$}
    \caption{Pretzel link $P(c_1,c_2,c_3)$.}
  \label{pretzel}}
\end{figure} 

\noindent
 Here $c_1$, $c_2$ and $c_3$ are the number of negative crossings in each twist region. See Figure \ref{twist} for an example of a negative twist region with $c$ crossings. 
 The goal of this example is to show the stability behavior of the colored Jones polynomial of the link $P(c_1,c_2,c_3)$ under changing the number of twists in the twists regions of the link diagram $L$.
\begin{figure}[H]
  \centering
   {\includegraphics[scale=.9]{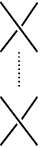}
    \put(-20,30){$c$}
    \caption{A negative twist region with $c$ crossings.}
  \label{twist}}
\end{figure}

\noindent 
In the pretzel link $P(c_1,c_2,c_3)$, let $c_1=8$, $c_2=6$ and let $c_3$ take the values $k \geq 1$.  We calculate the first $k+1$ coefficients of $J_2(P(8,6,k))$ as can be seen in the following table:\\

\begin{small}
 \begin{tabular}{c|>{\raggedright\arraybackslash$}p{6.5cm}<{$}c}
 \hline
 \text{The link $P(8, 6, k)$ } & \text{List of lowest $k+1$ of coefficients of $J_2({P(8, 6, k)})$}\\
  \hline
$k=1$ & $1,-1$  \\
$k=2$ & $1,-1,3$  \\ 
$k=3$ & $1,-1,3,-4$ \\
$k=4$ & $1,-1,3,-4,6$ \\
$k=5$ & $1,-1,3,-4,6,-8$ \\
$k=6$ & $1,-1,3,-4,6,-8,10$ \\
$k=7$ & $1,-1,3,-4,6,-8,10,-11$ \\
$k=8$ & $1,-1,3,-4,6,-8,10,-11,13$ \\
$k=9$ & $1,-1,3,-4,6,-8,10,-11,13,-13$ \\
$k=10$ & $1,-1,3,-4,6,-8,10,-11,13,-13,14$ \\
\hline 
\end{tabular}
\end{small}\\

\noindent
This stability behavior also holds when we change the number of crossings in two crossing regions at the same time as can be seen from the following table.\\

\begin{small}
 \begin{tabular}{c|>{\raggedright\arraybackslash$}p{6.5cm}<{$}c}
 \hline
 \text{The link $P(k, k, 2)$ } & \text{List of lowest $k+1$ of coefficients of $J_2({P(k, k, 2)})$}\\
  \hline
$k=1$ & $1,-1$  \\
$k=2$ & $1,-1,3$  \\ 
$k=3$ & $1,-1,3,-3$ \\
$k=4$ & $1,-1,3,-3,5$ \\
$k=5$ & $1,-1,3,-3,5,-6$ \\
$k=6$ & $1,-1,3,-3,5,-6,7$ \\
$k=7$ & $1,-1,3,-3,5,-6,7,-8$ \\
$k=8$ & $1,-1,3,-3,5,-6,7,-8,9$ \\
$k=9$ & $1,-1,3,-3,5,-6,7,-8,9,-10$ \\
$k=10$ & $1,-1,3,-3,5,-6,7,-8,9,-10,11$ \\
\hline 
\end{tabular}
\end{small}\\

\noindent
Furthermore, the stability behavior occurs also for higher colors. However, for higher colors more coefficients stabilize as we increase the number of crossings. For instance the following table shows a list of coefficients of the third colored Jones polynomial for a sequence of pretzel links.\\ 

\begin{tiny}
 \begin{tabular}{c|>{\raggedright\arraybackslash$}p{6.5cm}<{$}c}
 \hline
 \text{The link $P(k+2, k+4, k+1)$ } & \text{List of lowest $3k+1$ of coefficients of $J_3({P(k+2, k+4, k+1)})$}\\
  \hline
$k=1$ & $1,-1,-1,0$  \\
$k=2$ & $1,-1,-1,0,4,0,-4$  \\ 
$k=3$ & $1,-1,-1,0,4,0,-4,-5,7,6$ \\
$k=4$ & $1,-1,-1,0,4,0,-4,-5,7,6,-1,-13,1$ \\
$k=5$ & $1,-1,-1,0,4,0,-4,-5,7,6,-1,-13,1,7,9,-8$ \\
$k=6$ & $1,-1,-1,0,4,0,-4,-5,7,6,-1,-13,1,7,9,-8,-3,-5,5$ \\
$k=7$ & $1,-1,-1,0,4,0,-4,-5,7,6,-1,-13,1,7,9,-8,-3,-5,5,-1,13,-4$ \\
\hline 
\end{tabular}
\end{tiny}\\

\noindent
Finally, if we increase the number of crossings in twist regions along with the color we also obtain the stability behaviors shown in the following table:\\

\begin{small}
 \begin{tabular}{c|>{\raggedright\arraybackslash$}p{6.5cm}<{$}c}
 \hline
 \text{The link $P(2,5, k)$ } & \text{List of lowest $k+1$ of coefficients of $J_k({P(2, 5, k)})$}\\
  \hline
$k=1$ & $1,-1$  \\
$k=2$ & $1,-1,-1$  \\ 
$k=3$ & $1,-1,-1,0$ \\
$k=4$ & $1,-1,-1,0,0$ \\
$k=5$ & $1,-1,-1,0,0,1$ \\
$k=6$ & $1,-1,-1,0,0,1,0$ \\
$k=7$ & $1,-1,-1,0,0,1,0,1$ \\
\hline 
\end{tabular}
\end{small}\\

\end{example}

\noindent
An explicit formula for the Kauffman bracket of pretzel links is given in \cite{pretzelknots} and a recursive formula can be found in \cite{voy}.

The purpose of this paper is to prove that the stability behavior illustrated in this example holds for the colored Jones polynomial of any sequence of alternating links indexed by the color and the twist regions. 
\subsection{Main Results}
 We start by 
  stating the first main result on
  the stability of the coefficients for Kaufmman bracket, or equivalently the Jones polynomial, under increasing the number of twist in a single maximal twist region in:\\

{\bf Theorem } \ref{new greatness}{ \em
Let $L=L_k$ be an alternating link diagram with a marked maximal negative twist region with $k \geq 1$ crossings. Then,
\begin{equation*}
 \langle L_{k} \rangle  \doteq_{4k} \langle L_{k-1} \rangle.
\end{equation*}}\\
Here the notation $P_1\doteq_nP_2$ means that the lowest $ n$ coefficients of the Laurent polynomials $P_1$ and $P_2$ coincide up to a sign.
This theorem generalizes to alternating links with multiple marked twist regions in:\\

{\bf Theorem} \ref{2} { \em  Let $L=L(k_1, \ldots,k_r)$ be an alternating link diagram. Then
\begin{equation*}
\langle L(k_1,\ldots, k_r)\rangle \doteq_{4k} \langle L(k_1-1,\ldots, k_r-1) \rangle,
\end{equation*}
where $k=\min_{1 \leq i \leq r }(k_i)$.
}\\

Theorem \ref{new greatness} generalizes to the unreduced colored Jones polynomial:\\

{\bf Theorem } \ref{queen theorem}{ \em Let $L=L_k$ be a reduced alternating link diagram with a marked maximal negative twist region with $k \geq 1$ crossings. Then,
\begin{equation*}
\tilde{J}_n(L_{k})\doteq_{4n(k-1)+4}\tilde{J}_n(L_{k-1}).
\end{equation*} 
}\\

Moreover, Theorem \ref{2} also generalizes to the unreduced colored Jones polynomial:\\

{\bf Theorem } \ref{nice}{ \em  	 
Let $L=L(k_1,\ldots,k_r)$ be a reduced alternating link diagram. Then
\begin{equation*}
\tilde{J}_n( L(k_1,\ldots, k_r))\rangle \doteq_{n(k-1)+1}  \tilde{J}_n( L(k_1-1,\ldots, k_r-1) 
\end{equation*}
where $k=\min_{1 \leq i \leq r }(k_i)$.
}\\

The previous theorems give us a method to generate families of $q$-series parametrized by the color and the twist regions of the alternating link diagram:\\

 { \bf Theorem} \ref{Cor4.9}{ \em Let $L=L(k_1,\ldots,k_r)$ be an alternating link diagram. Then the sequence $\{\tilde{J}_n(L(k_1+i,\ldots,k_r+i)) \}_{i \in \mathbb{N}}$ has a well-defined tail.
}
\\

Finally we show that our work generalizes the work Armond, Dasbach \cite{CodyOliver} and Garoufalidis, L\^e \cite{GL}:\\

{ \bf Theorem} \ref{cody_us}{ \em Let $L=L(k_1, \ldots, k_r) $ be a reduced alternating diagram. 
Then the sequence 
$\{ \tilde{J}_{n+i} (L( k_1 +i, \ldots, k_r +i) ) \}_{i\in {\mathbb N}} $ 
has a well-defined tail.
}

\subsection{Organization of The Paper}
 The paper is organized as follows. In section \ref{review}, we review the basic material needed from skein theory.  Section \ref{sec3} contains the proof of the result in the case of the Jones polynomial. In section \ref{sec4}, we prove our result for the colored Jones polynomial. In section \ref{sec5} we show the connection of this stability behavior with the tail of the colored Jones polynomial. We conclude the paper by section \ref{sec6} which contains some open questions.
\section{Setting the Scene}
\label{review}
Let $M$ be an oriented $3$-manifold and let $I$ be a closed interval. A framed link in $M$ is an oriented embedding of a disjoint union of
oriented annuli in $M$. If the manifold $M$ has a boundary $\partial M$ then a closed interval in $\partial M$ is called a framed point. A \text{band} in $M$ is an oriented embedding of $I\times I$ that meets the boundary $\partial M$ orthogonally at two framed points. 
\begin{definition}\cite{Przytycki, RT} 
	Let $M$ be an oriented $3$-manifold and $\mathcal{R}$ be a commutative ring with an identity and an invertible element $A$. Let $\mathcal{L}_M$ be the set of isotopy classes of framed links in $M$ including the empty link. If $M$ has a boundary and an even number of marked framed points on $\partial M$ then the set $\mathcal{L}_M$ also includes the isotopy classes of bands that meet the marked points. Let $\mathcal{R}\mathcal{L}_M$ be the free $ \mathcal{R}$-module generated by the set $\mathcal{L}_M$. The \textit{Kauffman Bracket Skein Module} of $M$ and $\mathcal{R}$, denoted by $\mathcal{S}(M,\mathcal{R})$ is the quotient module
	$\mathcal{S}(M,\mathcal{R})=\mathcal{R}\mathcal{L}_M/K(M)$,
	where $K(M)$ is the smallest submodule of $ \mathcal{R}\mathcal{L}_M$ that is generated by all expressions of the form \cite{Kauffman1}:
	\begin{eqnarray*}(1)\hspace{3 mm}
		\begin{minipage}[h]{0.06\linewidth}
			\vspace{0pt}
			\scalebox{0.04}{\includegraphics{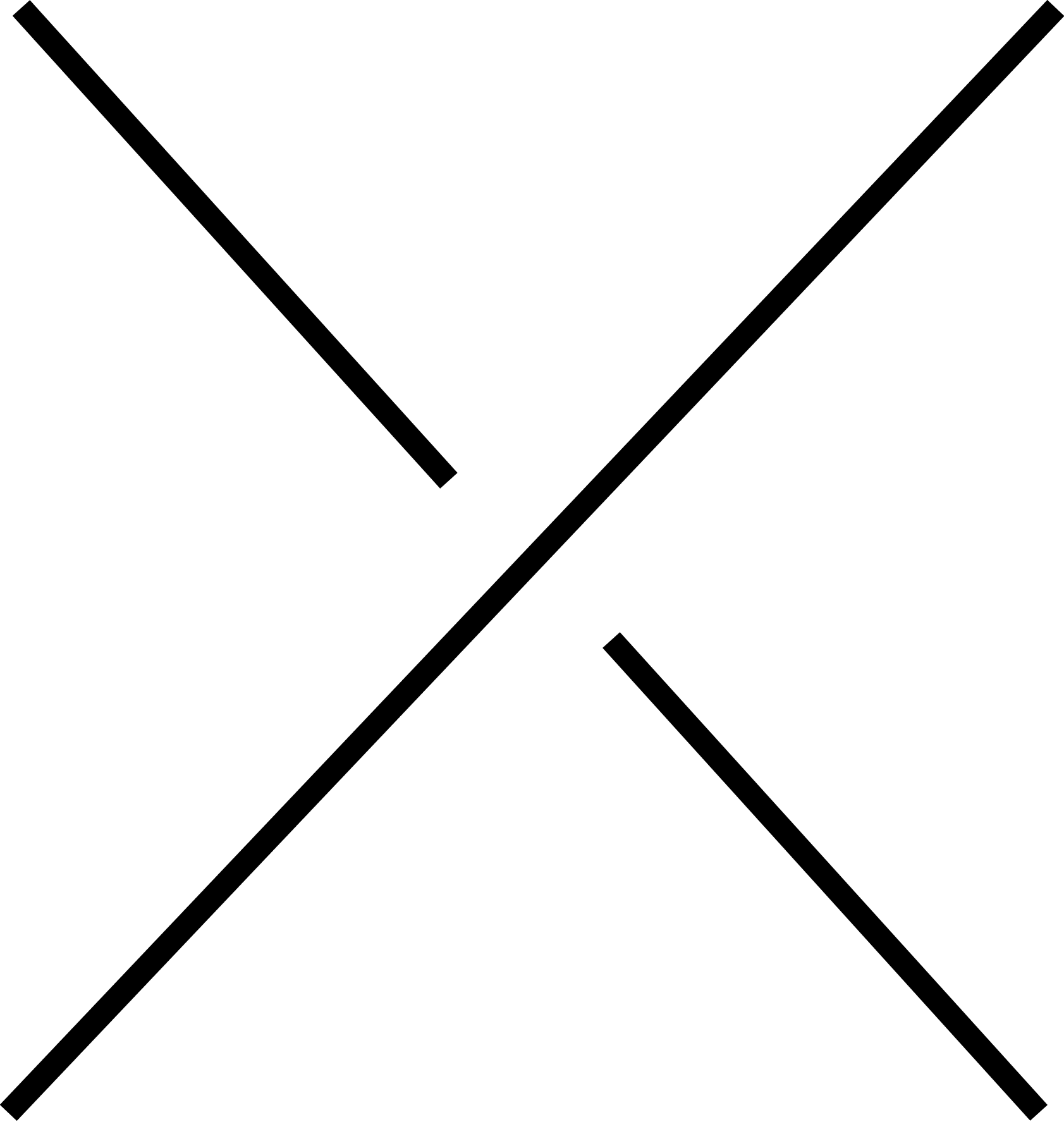}}
		\end{minipage}
		-
		A 
		\begin{minipage}[h]{0.06\linewidth}
			\vspace{0pt}
			\scalebox{0.04}{\includegraphics{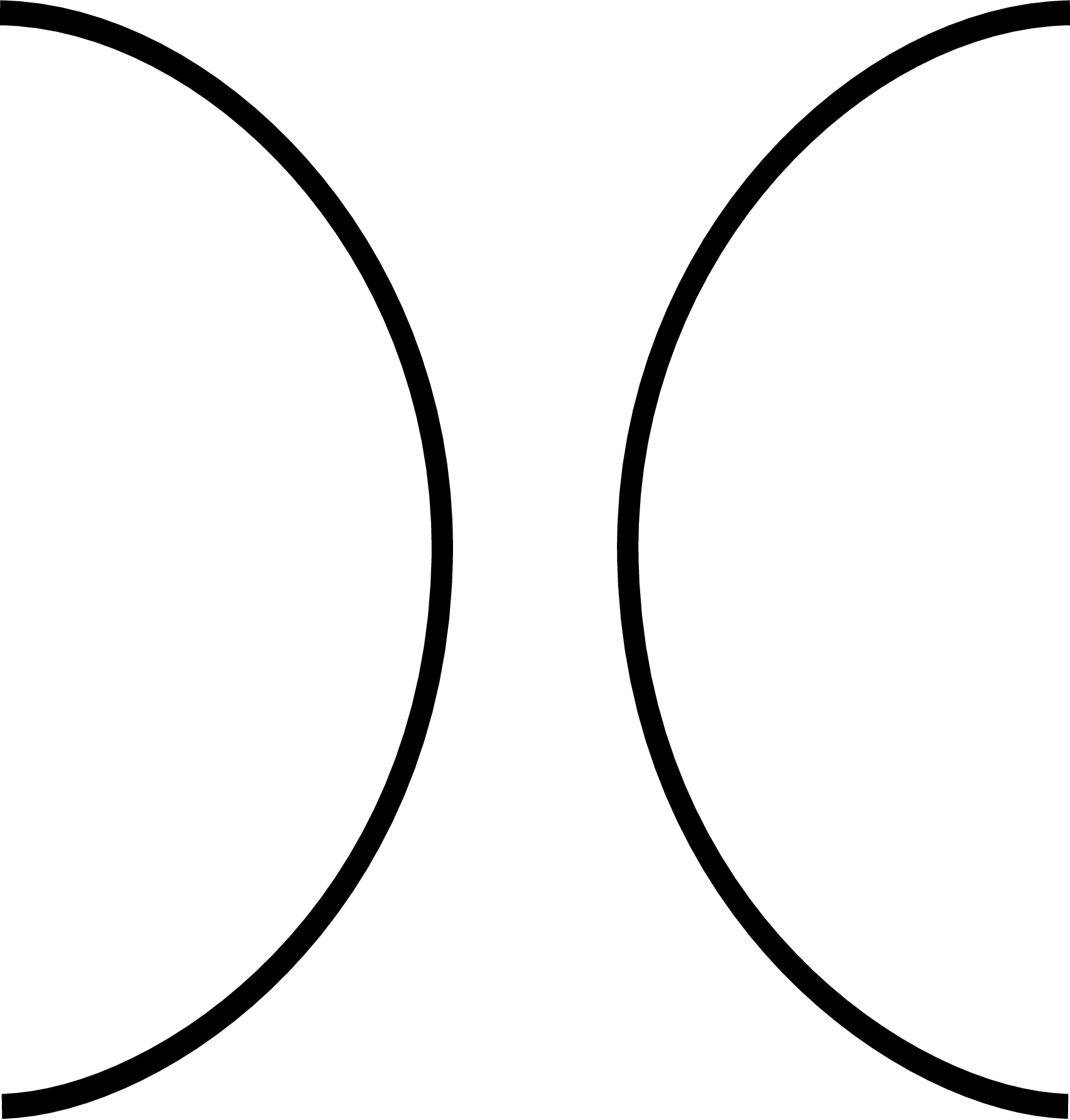}}
		\end{minipage}
		-
		A^{-1} 
		\begin{minipage}[h]{0.06\linewidth}
			\vspace{0pt}
			\scalebox{0.04}{\includegraphics{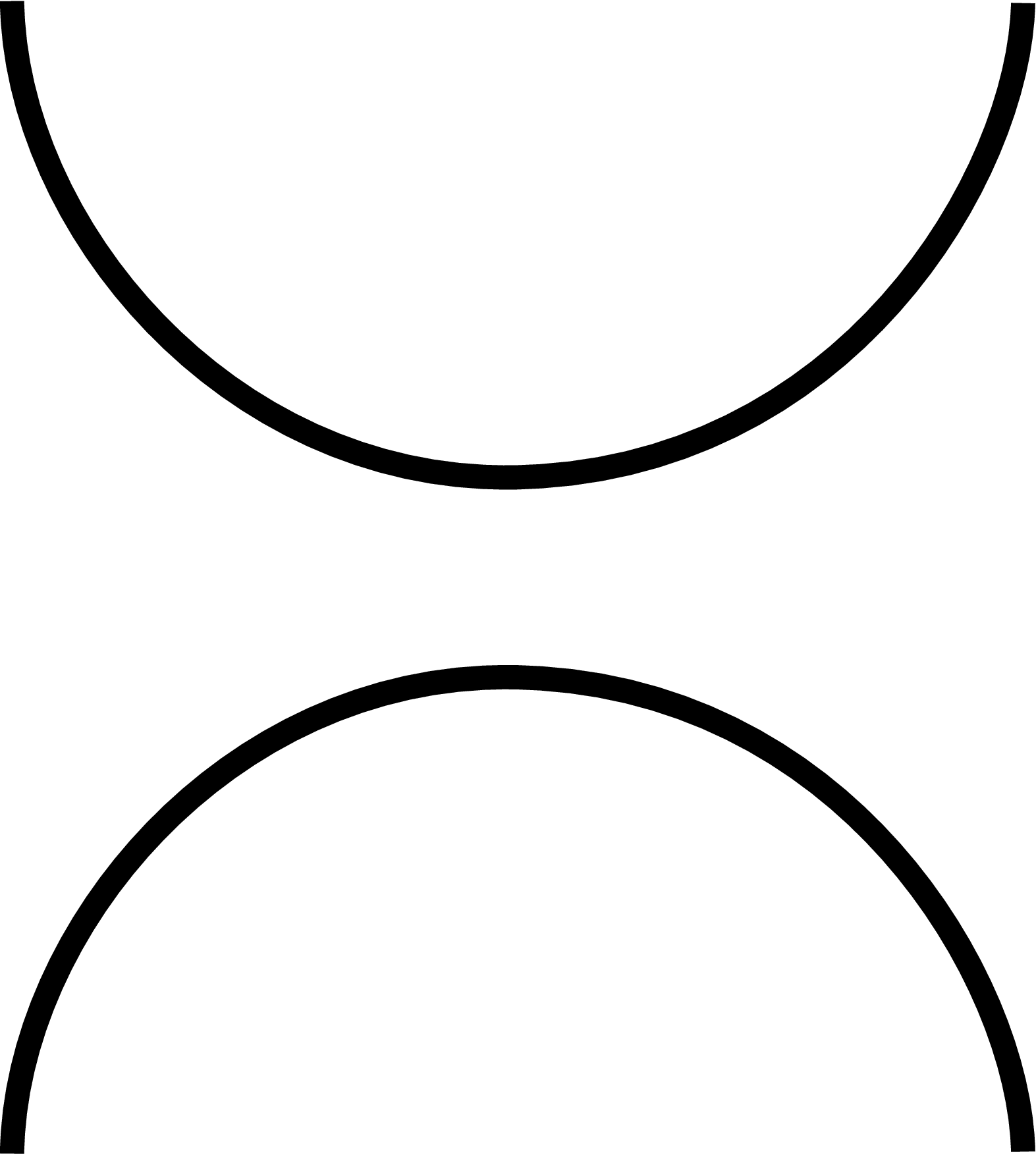}}
		\end{minipage}
		, \hspace{20 mm}
		(2)\hspace{3 mm} L\sqcup
		\begin{minipage}[h]{0.05\linewidth}
			\vspace{0pt}
			\scalebox{0.02}{\includegraphics{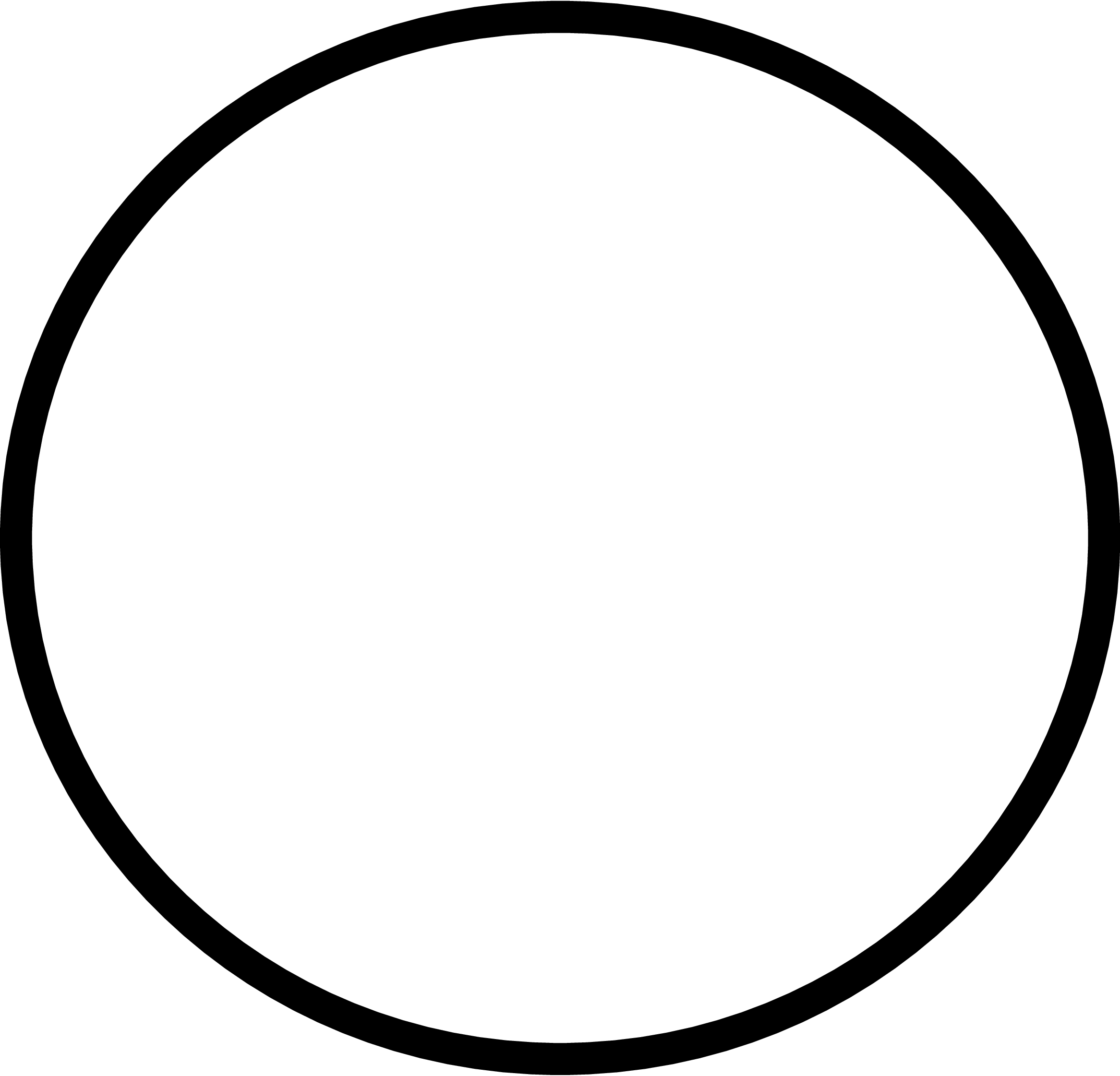}}
		\end{minipage}
		+
		(A^{2}+A^{-2})L. 
	\end{eqnarray*}
	where $L\sqcup$ \begin{minipage}[h]{0.05\linewidth}
		\vspace{0pt}
		\scalebox{0.02}{\includegraphics{simple-circle}}
	\end{minipage}  consists of an element in $\mathcal{L}_M$ and the zero-framed knot 
	\begin{minipage}[h]{0.05\linewidth}
		\vspace{0pt}
		\scalebox{0.02}{\includegraphics{simple-circle}}
	\end{minipage} that bounds a disk in $M$.
\end{definition}    
Throughout this article, the ring $\mathcal{R}$ will be fixed to be the ring of all rational functions $\mathbb{Q}(A)$ with the indeterminate $A$ so we will drop the ring from the notation of the Kauffman bracket skein module. Furthermore, if $F$ is an oriented surface and $M=F \times I$ then we will refer to $\mathcal{S}(M)$ by $\mathcal{S}(F)$ and refer to this module by the Kauffman bracket skein module of $F$.

The Kauffman bracket induces an isomophism $\langle \hspace{2pt}   \rangle : \mathcal{S}(S^2) \longrightarrow \mathbb{Q}(A)$ given by sending every diagram $D$ to $\langle D\rangle$. In particular this isomorphism sends the empty link to $1$. Let $D=I\times I$ be the unit square. Fix $2n$ marked points on the boundary of $D$ such that we have $n$ points on the top of $D$ and $n$ points on the bottom of it. Denote by $\mathcal{S}(D^2,2n)$ the Kauffman bracket skein module of the disk $D$ with the $2n$ marked points. This module can be made into an associative algebra over $\mathbb{Q}(A)$ by the obvious vertical diagram juxtaposition. This algebra, known as the \textit{$n^{th}$ Temperley-Lieb algebra} $TL_n$, plays a central role in Witten-Reshetikhin-Turaev Invariants for $SU(2)$ \cite{Lic92,RT}, the colored Jones polynomial and its applications \cite{RT,TuraevViro92}, and quantum spin networks \cite{MV}. See also the book \cite{Ohtsuki} for more details. For every $n \geq 1 $ there exists a unique idempotent in $TL_n$ known as the $n^{th}$ \textit{Jones-Wenzl idempotent} (projector), denoted $f^{(n)}$. The Jones-Wenzl idempotent was defined by Jones in \cite{J2} and it satisfies the following recursive formula due to Wenzl \cite{Wenzl}: 
\begin{align}
\label{recursive}
  \begin{minipage}[h]{0.05\linewidth}
        \vspace{0pt}
        \scalebox{0.12}{\includegraphics{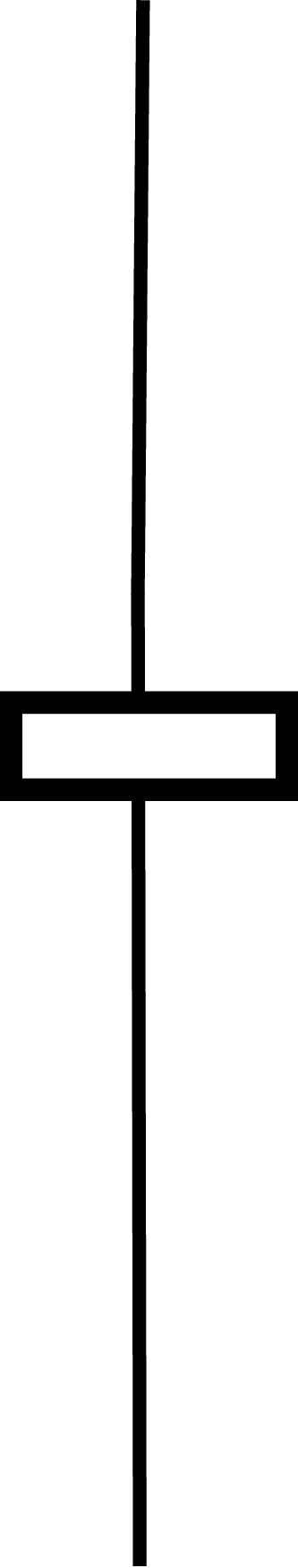}}
         \put(-20,+70){\footnotesize{$n$}}
   \end{minipage}
   =
  \begin{minipage}[h]{0.08\linewidth}
        \hspace{8pt}
        \scalebox{0.12}{\includegraphics{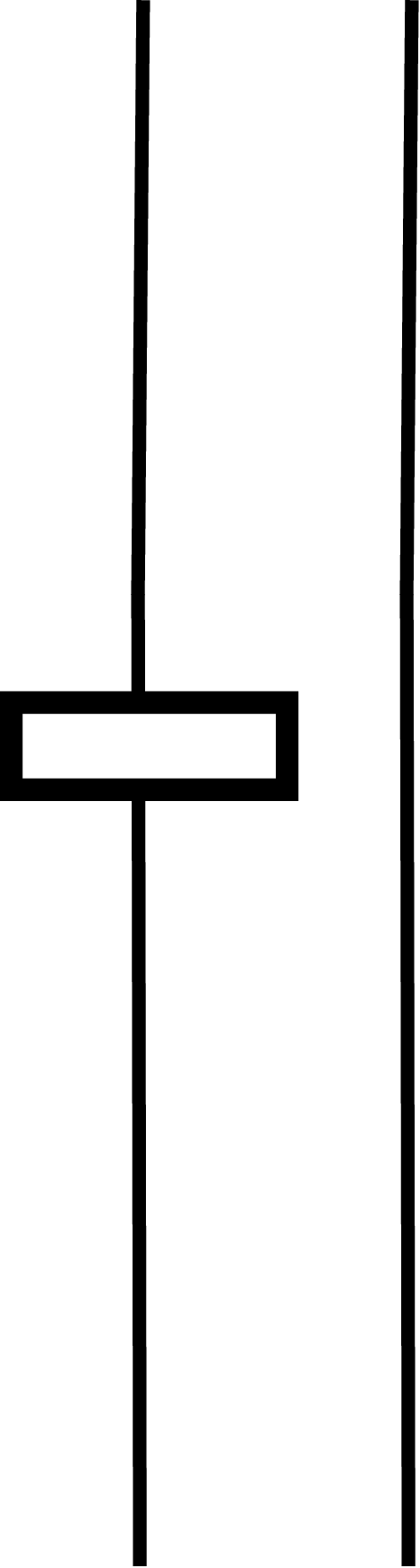}}
        \put(-42,+70){\footnotesize{$n-1$}}
        \put(-8,+70){\footnotesize{$1$}}
   \end{minipage}
   \hspace{9pt}
   -
 \Big( \frac{\Delta_{n-2}}{\Delta_{n-1}}\Big)
  \hspace{9pt}
  \begin{minipage}[h]{0.10\linewidth}
        \vspace{0pt}
        \scalebox{0.12}{\includegraphics{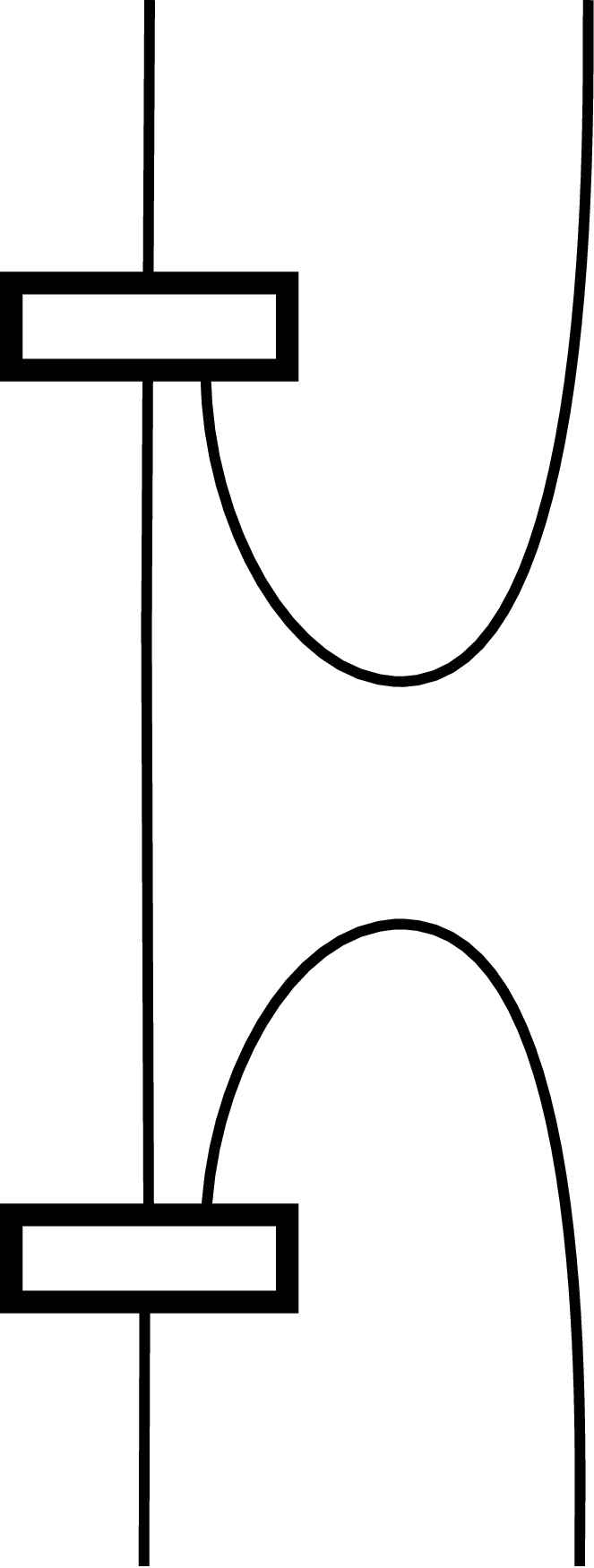}}
         \put(2,+85){\footnotesize{$1$}}
         \put(-52,+87){\footnotesize{$n-1$}}
         \put(-25,+47){\footnotesize{$n-2$}}
         \put(2,+10){\footnotesize{$1$}}
         \put(-52,+5){\footnotesize{$n-1$}}
   \end{minipage}
  , \hspace{20 mm}
    \begin{minipage}[h]{0.05\linewidth}
        \vspace{0pt}
        \scalebox{0.12}{\includegraphics{nth-jones-wenzl-projector}}
        \put(-20,+70){\footnotesize{$1$}}
   \end{minipage}
  =
  \begin{minipage}[h]{0.05\linewidth}
        \vspace{0pt}
        \scalebox{0.12}{\includegraphics{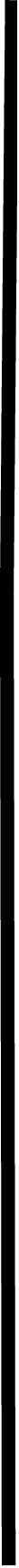}}
   \end{minipage}   
  \end{align}
  where 
\begin{equation*}
 \Delta_{n}=(-1)^{n}\frac{A^{2(n+1)}-A^{-2(n+1)}}{A^{2}-A^{-2}}.
\end{equation*} 

The element $f^{(n)}$ is characterized by the following properties \cite{Lic92}:
\begin{eqnarray}
\label{properties}
\hspace{0 mm}
\begin{minipage}[h]{0.21\linewidth}
\vspace{0pt}
\scalebox{0.115}{\includegraphics{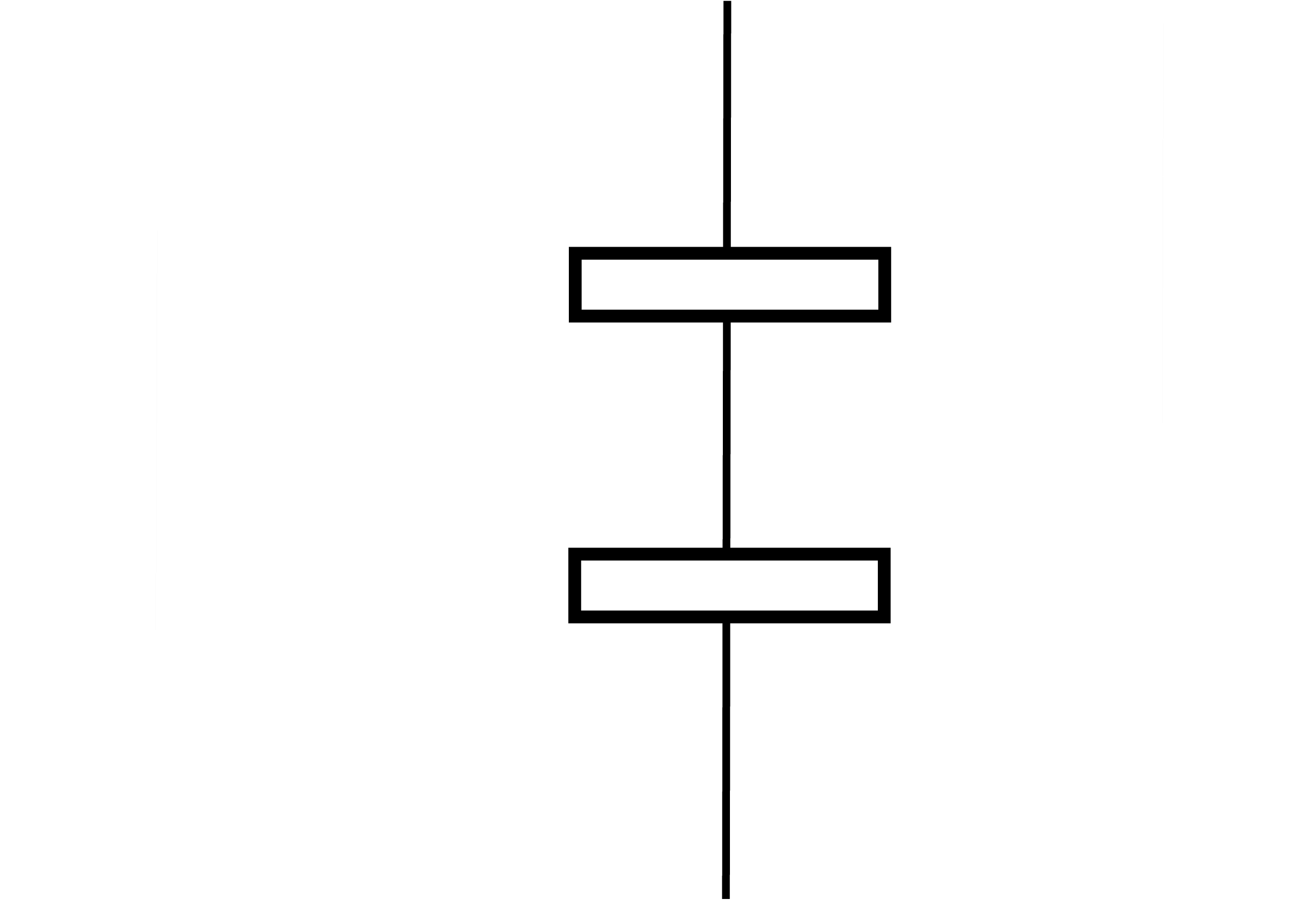}}
\put(0,+80){\footnotesize{$n$}}

\end{minipage}
= \hspace{5pt}
\begin{minipage}[h]{0.1\linewidth}
\vspace{0pt}
\hspace{100pt}
\scalebox{0.115}{\includegraphics{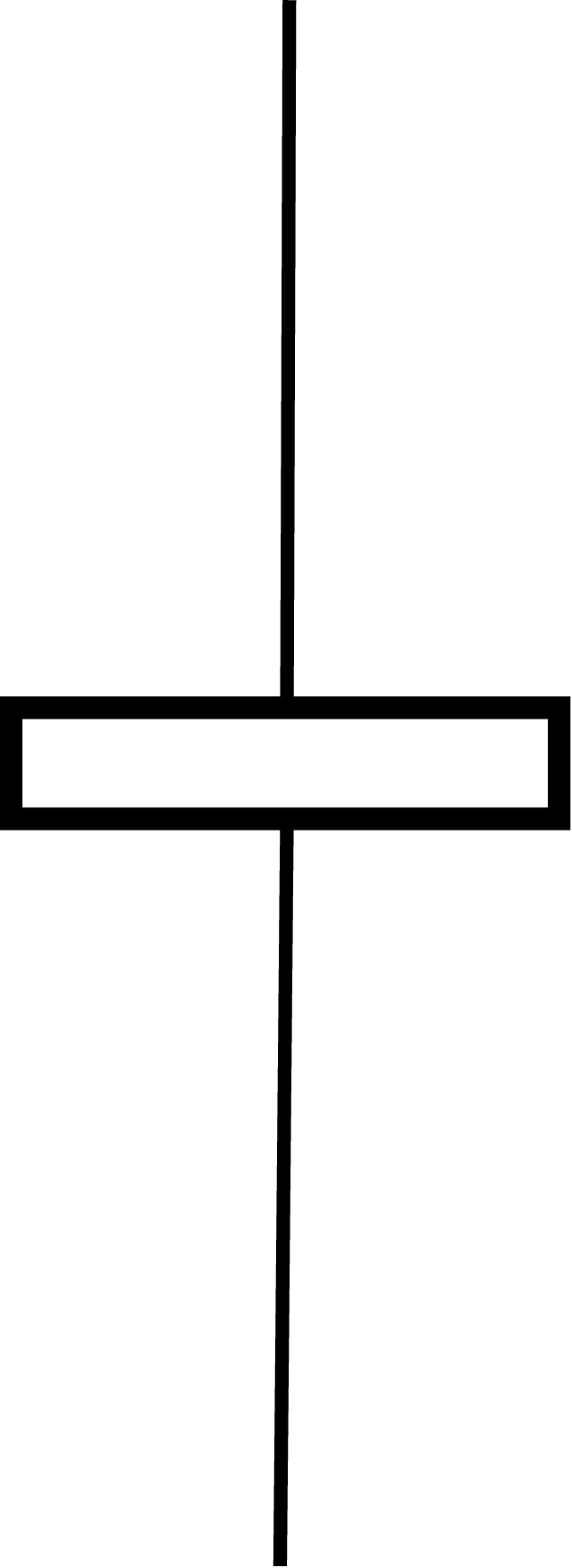}}
\put(-60,80){\footnotesize{$n$}}
\end{minipage}
, \hspace{35 mm}
\begin{minipage}[h]{0.09\linewidth}
\vspace{0pt}
\scalebox{0.115}{\includegraphics{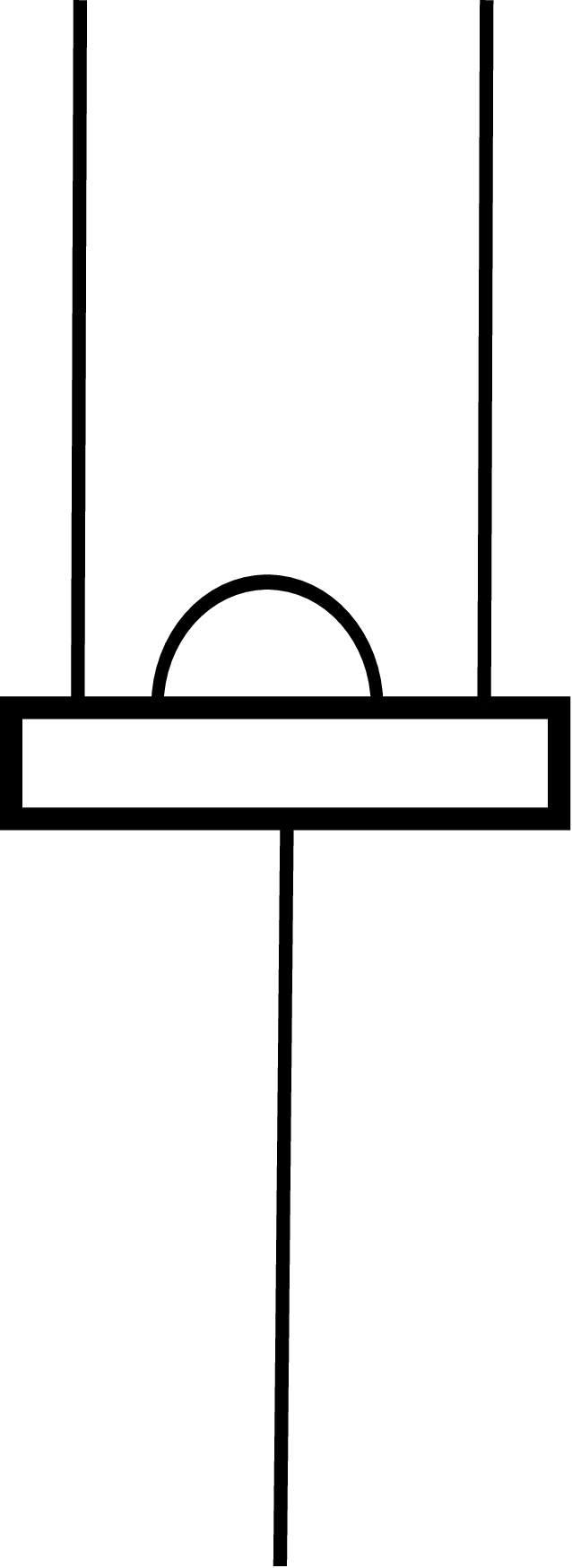}}
\put(-70,+82){\footnotesize{$n-i-2$}}
\put(-20,+64){\footnotesize{$1$}}
\put(-2,+82){\footnotesize{$i$}}
\put(-28,20){\footnotesize{$n$}}
\end{minipage}
=0.
\label{AX}
\end{eqnarray}
The second equation of \ref{AX} holds for $0\leq i\leq n-2$. We will also need the following properties of the projector:     
\begin{eqnarray}
\label{properties_2}
\hspace{8 mm}
\begin{minipage}[h]{0.08\linewidth}
\vspace{0pt}
\scalebox{0.115}{\includegraphics{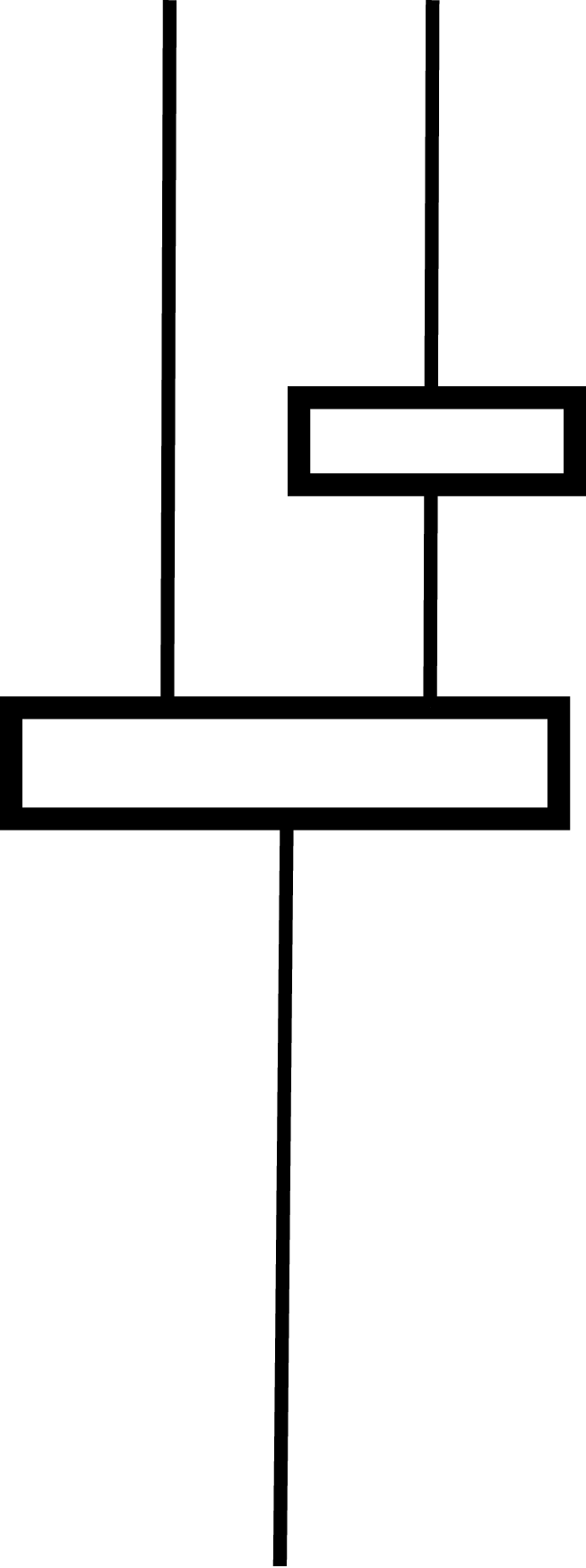}}
\put(-34,+82){\footnotesize{$n$}}
\put(-19,+82){\footnotesize{$m$}}
\put(-46,20){\footnotesize{$m+n$}}
\end{minipage}
=
\begin{minipage}[h]{0.09\linewidth}
\vspace{0pt}
\scalebox{0.115}{\includegraphics{idempotent2}}
\put(-46,20){\footnotesize{$m+n$}}
\end{minipage}
, \hspace{14 mm}
\begin{minipage}[h]{0.08\linewidth}
\vspace{-5pt}
\scalebox{0.19}{\includegraphics{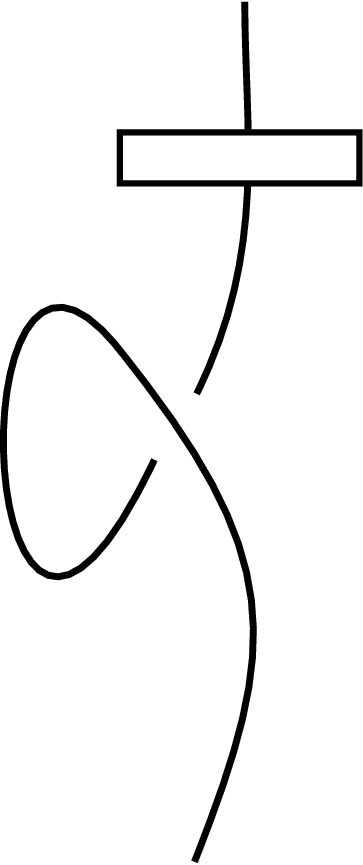}}
\put(-22,+75){\footnotesize{$n$}}
\end{minipage}
=(-1)^nA^{-n^2-2n}
\begin{minipage}[h]{0.09\linewidth}
\hspace{5pt}
\scalebox{0.19}{\includegraphics{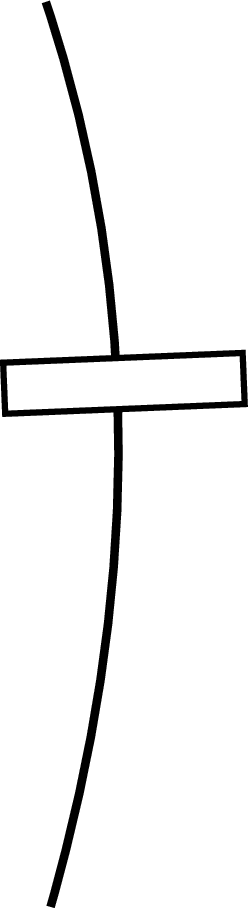}}
\put(-6,55){\footnotesize{$n$}}.
\end{minipage}
\end{eqnarray}
We also need the following fact from \cite{Hajij3}:
\begin{eqnarray}
\label{greatness}
   \begin{minipage}[h]{0.15\linewidth}
        \vspace{0 pt}
        \scalebox{0.4}{\includegraphics{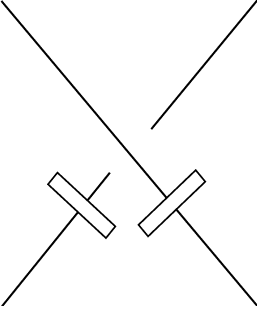}}
        \scriptsize{
         \put(-55,+52){$n$}
          \put(-1,+52){$n$}}

   \end{minipage}
   =
     \displaystyle\sum\limits_{i=0}^{n}C_{n,i}
  \begin{minipage}[h]{0.15\linewidth}
        \vspace{0pt}
        \scalebox{.4}{\includegraphics{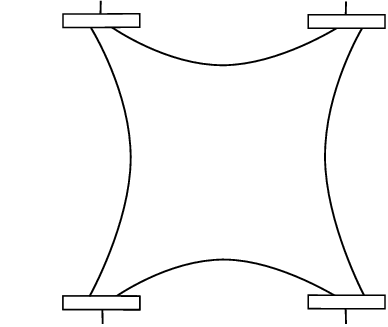}}
        \scriptsize{
        \put(-60,+64){$n$}
          \put(-5,+64){$n$}
          \put(-58,30){$i$}
          \put(-7,30){$i$}
         \put(-43,+56){$n-i$}
           \put(-43,+16){$n-i$}
           }
          \end{minipage}
  \end{eqnarray}
  where
  \begin{equation}
  \label{my fav}
  C_{n,i}=A^{n^2+2i^2-4in}\frac{(A^4;A^4)_n}{(A^4;A^4)_i(A^4;A^4)_{n-i}}.
  \end{equation}
Here $(a;q)_n$ is the $q$-Pochhammer which is defined as 
\begin{equation*}
(a;q)_n=\prod\limits_{j=0}^{n-1}(1-aq^j).
\end{equation*}  
The Jones-Wenzl projector can be used to extend the Kauffman bracket skein modules to banded trivalent graphs. Let $(a,b,c)$ a triple of positive integers. We say that $(a,b,c)$ is admissible if $a+b+c \equiv 0 \hspace{2pt}(\text{mod}\hspace{2pt} 2)$ and $|a-b|<c<a+b$. Given an admissible triple we define a $3$-valent vertex as :

 \begin{eqnarray}
 \label{untwist}
    \begin{minipage}[h]{0.19\linewidth}
        \vspace{-5pt}
        \scalebox{0.15}{\includegraphics{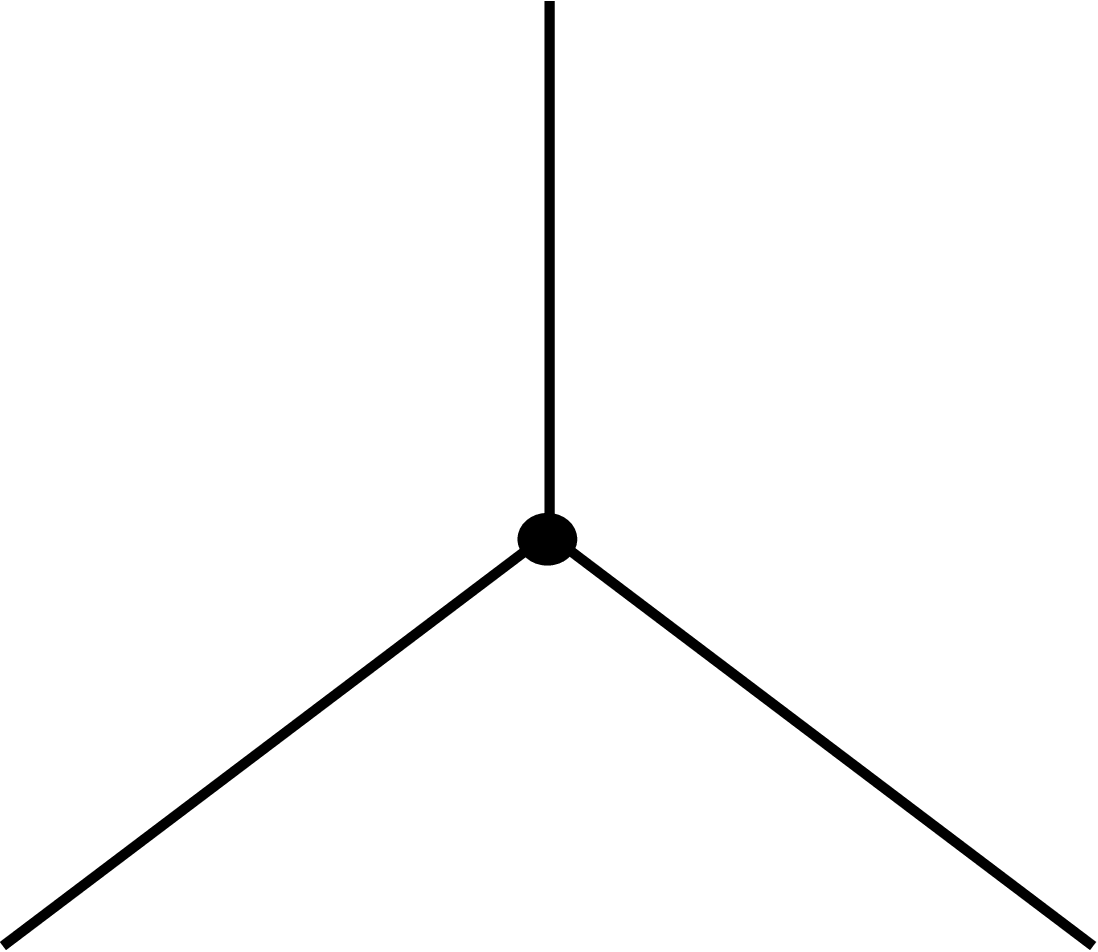}}
          \put(0,5){\footnotesize{$b$}}
        \put(-47,+60){\footnotesize{$a$}}
        \put(-77,+5){\footnotesize{$c$}}
   \end{minipage}
   =
\hspace{1 mm}  \quad
   \begin{minipage}[h]{0.1\linewidth}
        \vspace{0pt}
        \scalebox{0.4}{\includegraphics{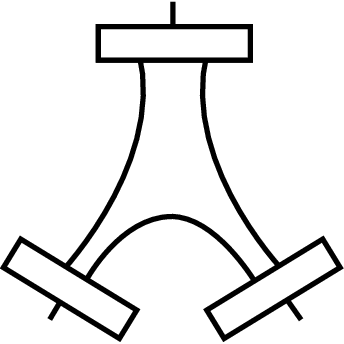}}
        \put(0,5){\footnotesize{$b$}}
        \put(-47,+65){\footnotesize{$a$}}
        \put(-70,+5){\footnotesize{$c$}}
		 \put(-19,40){\footnotesize{$x$}}
        \put(-47,40){\footnotesize{$y$}}
        \put(-40,14){\footnotesize{$z$}}        
        
   \end{minipage}\hspace{20pt}.
   \end{eqnarray}

Here $x$, $y$ and $z$ are three non-negative integers that are determined uniquely by the equations $a=x+y$, $b=x+z$ and $c=y+z$. These three equations are equivalent to the equations  
$x=\frac{a+b-c}{2}$,
$y=\frac{a+c-b}{2}$, and 
$z=\frac{b+c-a}{2}$ when the triple $(a,b,c)$ is admissible. We usually refer to the trivalent vertex shown on the right hand side of the previous equation by $\tau_{a,b,c}$ and refer to the positive integers $a,b$ and $c$ as \textit{colors}. Furthermore, we usually call $x$, $y$ and $z$ the \textit{internal colors} of the element $\tau_{a,b,c}$.\\ 

The \textit{fusion formula} is given by

 \begin{eqnarray}
 \label{fusion}
    \begin{minipage}[h]{0.09\linewidth}
        \vspace{0pt}
        \scalebox{0.4}{\includegraphics{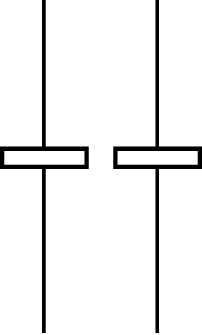}}
          \put(-5,+50){\footnotesize{$n$}}
        \put(-40,+50){\footnotesize{$n$}}
   \end{minipage}
   =\displaystyle\sum\limits_{i=0}^n\frac{\Delta_{2i}}{\theta(n,n,2i)}\hspace{1 mm}  
   \begin{minipage}[h]{0.09\linewidth}
        \vspace{0pt}
        \scalebox{0.3}{\includegraphics{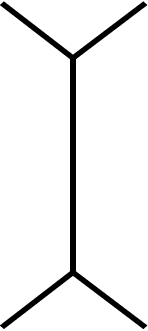}}
         \put(-29,+46){\footnotesize{$n$}}
        \put(-29,-1){\footnotesize{$n$}}
        \put(2,-1){\footnotesize{$n$}}
        \put(2,+46){\footnotesize{$n$}}
        \put(-5,+23){\footnotesize{$2i$}}
   \end{minipage}\hspace{10pt}
   \end{eqnarray}


\noindent  
where $\theta(a,b,c)$ is the evaluation of the \textit{theta graph}, Figure \ref{thetagraph}, in the Kauffman bracket skein module of $S^2$. We will denote the trivalent graph that appears on the right handside of (\ref{fusion}) by $T_{n,i}$.
\begin{figure}[H]
	\centering
	{\includegraphics[scale=0.1]{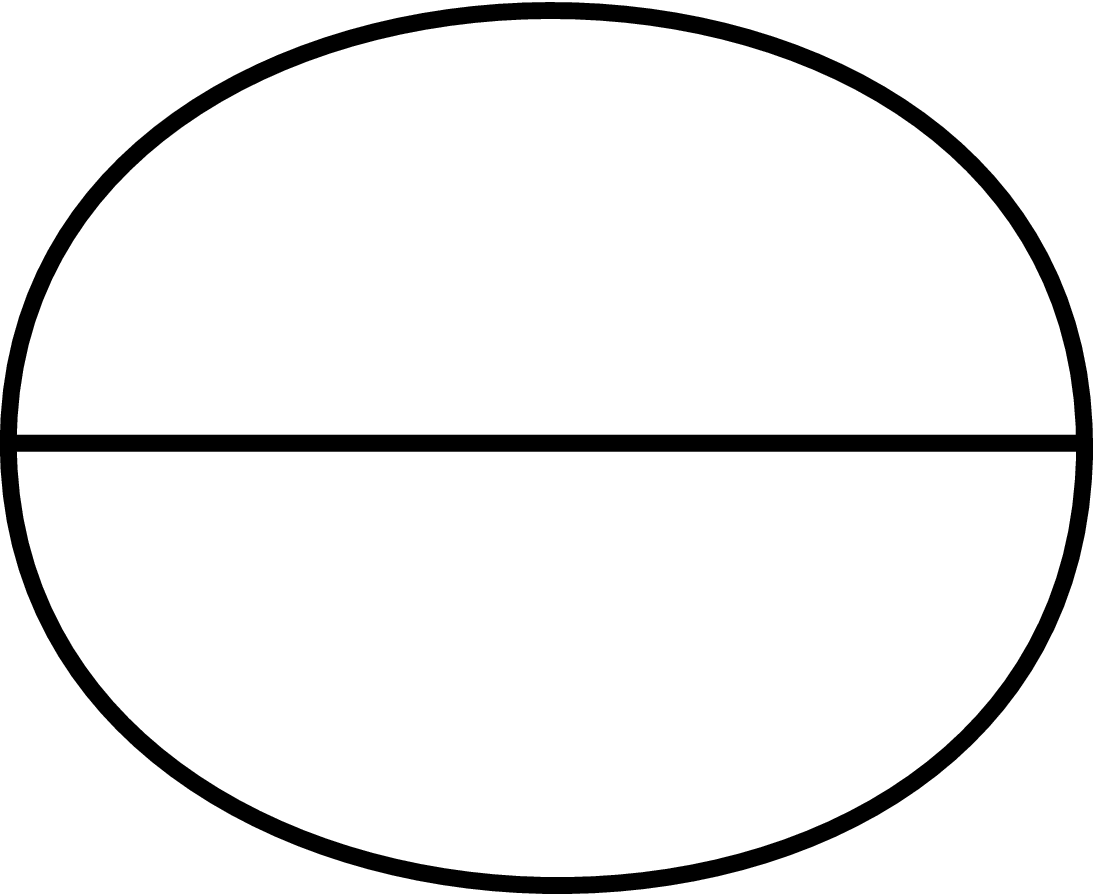}
		\put(-25,+45){\footnotesize{$a$}}
		\put(-25,+23){\footnotesize{$b$}}
		\put(-25,+4){\footnotesize{$c$}}
		\caption{The theta graph $\theta(a,b,c)$.}
		\label{thetagraph}}
\end{figure}

 This evaluation is given explicitly by 
\begin{equation}
\label{mn3}
\theta(a,b,c)=(-1)^{x+y+z} A^{-2(x+y+z)}  
 \frac{(A^4;A^4)_{x}(A^4;A^4)_{y}(A^4;A^4)_{z} (A^4 ;A^4)_{x+y+z+1}}{ (1-A^4)(A^4;A^4)_{x+y}(A^4;A^4)_{y+z}(A^4;A^4)_{x+z}},
\end{equation}
where $x$, $y$ and $z$ are the internal colored of the $3$-vertex $\tau_{a,b,c}$. Furthermore, one has:

 \begin{eqnarray}
 \label{untwist}
    \begin{minipage}[h]{0.09\linewidth}
        \vspace{0pt}
        \scalebox{0.1}{\includegraphics{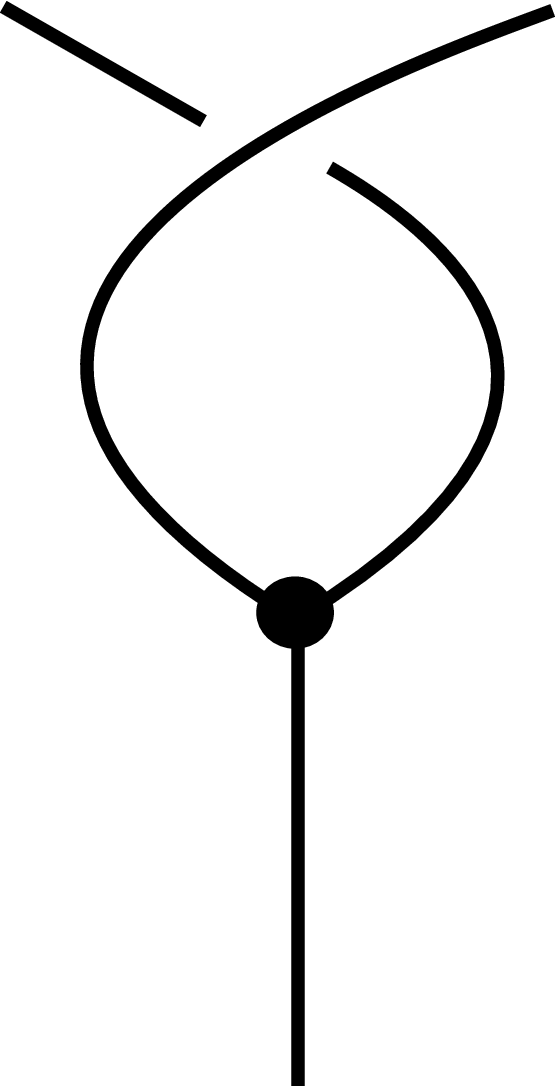}}
          \put(5,+50){\footnotesize{$b$}}
        \put(-40,+50){\footnotesize{$a$}}
        \put(-20,+5){\footnotesize{$c$}}
   \end{minipage}
   =\mu_c^{a,b}
\hspace{1 mm}  
   \begin{minipage}[h]{0.09\linewidth}
        \vspace{0pt}
        \scalebox{0.1}{\includegraphics{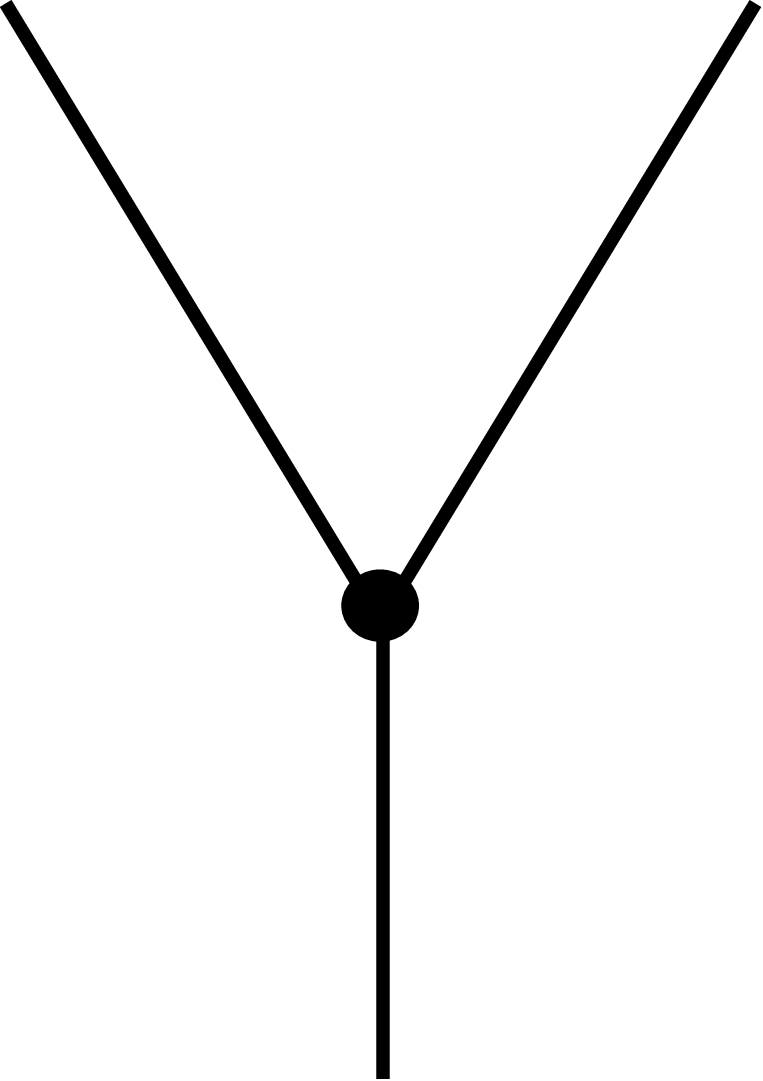}}
         \put(5,+50){\footnotesize{$b$}}
        \put(-45,+50){\footnotesize{$a$}}
        \put(-23,+5){\footnotesize{$c$}}
   \end{minipage}
   \end{eqnarray}
where $\mu_c^{a,b}=(-1)^{\frac{a+b-c}{2}}A^{
a+b-c +\frac{a^2+b^2-c^2}{2}}$.\\

\noindent
We will also need the definition of the colored Jones polynomial. Let $L$ be a framed link in $S^3$. Take cabling of every component of $L$, according to its framing, by the $n^{th}$ Jones-Wenzl idempotent and consider the evaluation of decorated framed link as an element of $\mathcal{S}(S^3)$. Up to a power of $\pm A$, that depends on the framing of $L$, the value of this element is defined to be the $n^{th}$ (unreduced) colored Jones polynomial polynomial $\tilde{J}_{n,L}(A)$. The colored Jones polynomial can be recovered from the unreduced colored Jones polynomial via:
\begin{equation}
\label{change of variable}
J_{n+1,L}(q)=\frac{\tilde{J}_{n,L}(A)}{\Delta_n}\bigg|_{A=q^
{1/4}}.
\end{equation}
Since we are interested in the list of the coefficients of the colored Jones polynomial, we do not need to take into consideration the framing of the link in our study. For this reason, in our computations of the coefficients of the $n$-th unreduced colored Jones polynomial of a link $L$ we choose any diagram $D$ of the link $L$ and compute the evaluation of the skein element obtained from $D$ by blackboard cabling all its components by the $n^{th}$ Jones-Wenzl idempotent. We denote this evaluation by $\langle S_n(D) \rangle$. Note that 
\begin{equation*}
\tilde{J}_{n,L} \doteq \langle S_n(D) \rangle.
\end{equation*}

\begin{remark}
	We will only state the facts that are concerned with the lowest terms of the colored Jones polynomial of alternating links. It should be noted, however, that these facts can also be proven for the highest terms analogously.
\end{remark}
\section{Coefficients Stability of the  Jones polynomial under Increasing Number of Twists}
\label{sec3}
In this section we give a proof of the coefficients stability of the Jones polynomial of alternating link diagram under increasing the number of twists in a maximal twist region. Later we will give a general proof for all colors. We choose to give a separate proof for the Jones polynomial since it is less technical and it illustrates clearly the main idea of the more general technical proof given in section \ref{sec4}.
\subsection{Alternating Links and the Minimal Degree of the Jones Polynomial} 
 Let $L$ be a link in $S^3$ and let $D$ be a link diagram of $L$. For any crossing in $D$ there are two ways to smooth it, the positive smoothing and the negative smoothing. See Figure \ref{smoothings111}.
\begin{figure}[H]
  \centering
   {\includegraphics[scale=0.14]{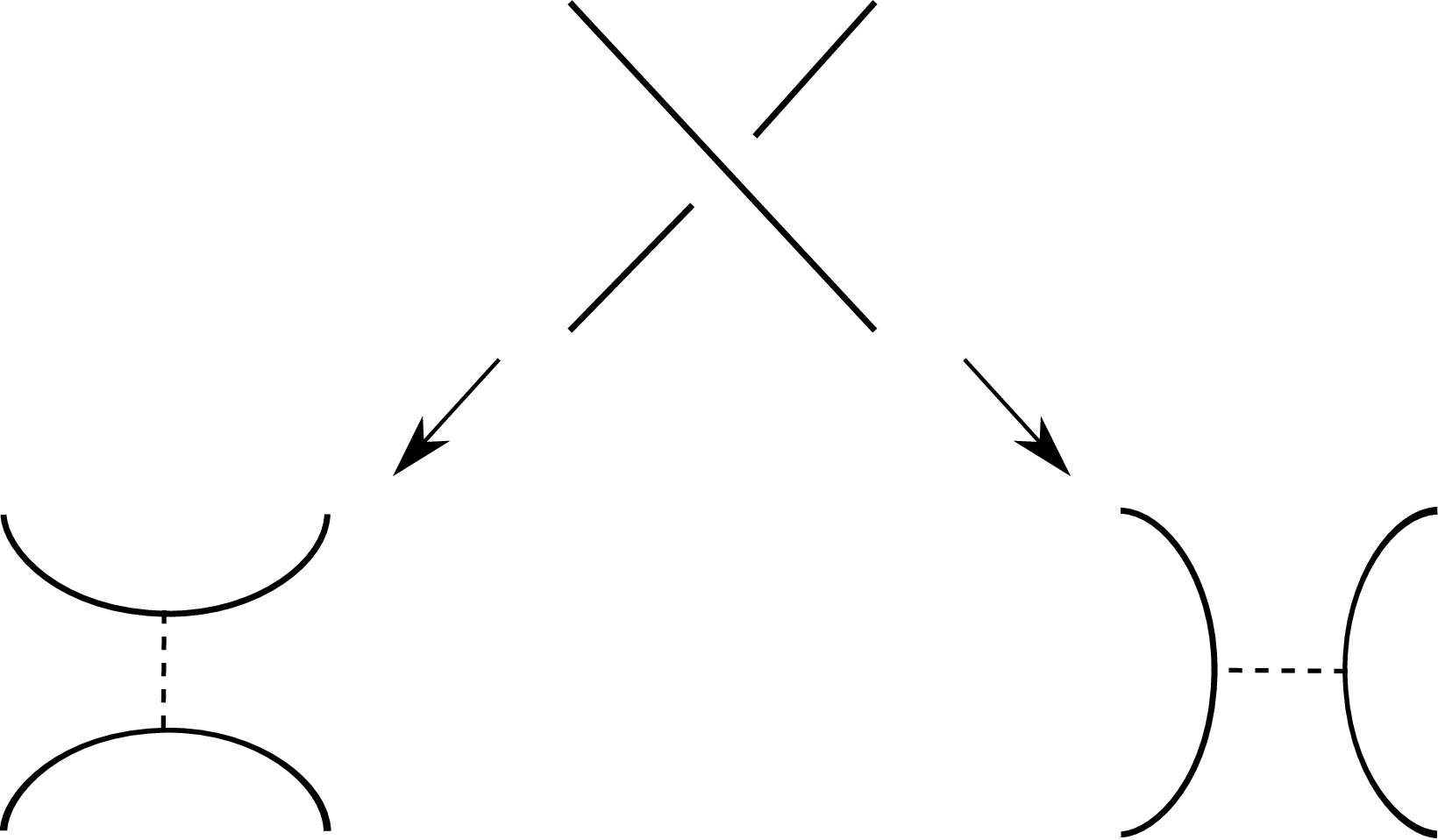}
    \put(-88,33){$+$}
          \put(-30,33){$-$}
    \caption{The positive and the negative smoothings of a crossing.}
  \label{smoothings111}}
\end{figure}
After applying a smoothing to each crossing in $D$ we obtain a planar diagram consisting of a collection of disjoint circles in the plane. We call this diagram along with the crossings assignments a \textit{state} of the diagram $D$. More precisely, write $R_D$ to denote the set of crossings of the diagram $D$. A state of the diagram $D$ is a function $s: R_D \longrightarrow \{-1,+1\}$. If $s$ is a state of $D$ then we denote by $s(D)$ the diagram constructed from $D$ using $s$. The state which assigns to every crossing the value $+1$ is called the all-positive state and is denoted by $s_+(D)$. The all-negative state $s_-(D)$ is defined similarly. For a state $s$ of a diagram $D$ we write $|s(D)|$ to denote the number of connected components in $s(D)$. 

The all-negative state can be used to compute the minimum degree of adequate links, a class of links that contain alternating links. We will need the notion of adequate links later so we give its definition here. 
\begin{definition}
 Let $D$ be a link diagram. The minus-graph of the diagram $D$, denoted $\mathbb{G}_{-}(D)$ is the graph whose vertices are the circles of $s_-(D)$ and whose edges correspond to the crossings in the diagram $D$. The reduced minus-graph of $D$, denoted by $\mathbb{G}^{\prime}_{-}(D)$, is obtained from $\mathbb{G}_{-}(D)$ by replacing parallel edges by a single edge (see Figure \ref{allB}).
\end{definition}
The plus-graph and the reduced plus-graph of a link diagram $D$ are defined similarly. A link diagram $D$ is called minus-adequate if  $\mathbb{G}_{-}(D)$ does not contain any loop. The notion of plus-adequate diagram is defined similarly. A link diagram is adequate if it is both minus-adequate and plus-adequate. It is known \cite{Th} that a reduced alternating diagram is adequate.\\

 \begin{figure*}[htb]
 	\centering
 	{\includegraphics[scale=0.1]{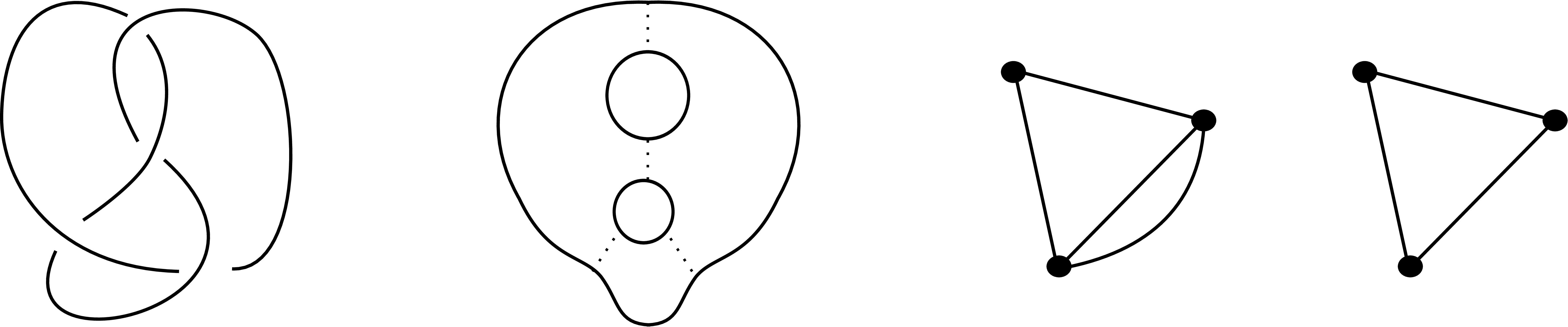}
 		\put(-290,-15){$D$ }
 		\put(-10,-15){$\mathbb{G}^{\prime}_-(D)$  }
 		\put(-195,-15){$s_-(D)$ }
 		\put(-100,-15){$\mathbb{G}_-(D)$ }
 		\caption{A link diagram $D$, its all-negative state $s_-(D)$, the minus-graph $\mathbb{G}_-(D)$, and the reduced all minus-graph $\mathbb{G}^{\prime}_-(D)$.  }
 		\label{allB}}
 \end{figure*}
The Kauffman skein relation can be used to show that the all-positive state (respectively
the all-negative state) realizes the highest (respectively the lowest) coefficient of the Jones polynomial of a plus-adequate (respectively minus-adequate) link.

For an element $f$ in $\mathcal{S}(S^2)$ expressed as an element in $\mathbb{Z}[A^{-1}][[A]]$, the symbol $m(f)$ will denote the minimum degree of $f$.

\begin{proposition}\cite{Th}
Let $D$ be a minus-adequate link diagram with $c$ crossings. Then
\begin{equation*}
\label{mainprop1}
m( \langle D \rangle )=m(\langle s_-(D)\rangle )=-c-2|s_-(D)|.
\end{equation*}
and for any state $s$ of $D$ different from $s_-$, we have 
\begin{equation*}
m(\langle s(D)\rangle )-4 \geq m(\langle s_-(D) \rangle ), 
\end{equation*}
with equality if the state $s$ is obtained from $s_-$ by changing the label of one crossing from a positive smoothing to a negative smoothing.
\end{proposition}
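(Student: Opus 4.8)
The plan is to derive everything from the standard state-sum expansion of the Kauffman bracket. Expanding relation (1) at every crossing and collapsing circles via relation (2), one obtains
\[
\langle D\rangle=\sum_{s}A^{\alpha(s)}\,(-A^2-A^{-2})^{|s(D)|},
\]
where $s$ ranges over all states $s\colon R_D\to\{+1,-1\}$ and $\alpha(s)$ is the number of positive smoothings minus the number of negative smoothings. I would first fix the convention (unreduced bracket, each circle evaluating to $-A^2-A^{-2}$) so the constants come out right. Since $(-A^2-A^{-2})^{|s(D)|}$ has a nonzero lowest-degree term $(-1)^{|s(D)|}A^{-2|s(D)|}$, multiplying by the monomial $A^{\alpha(s)}$ shows that the contribution of a single state has minimal degree exactly $\alpha(s)-2|s(D)|$, with no internal cancellation. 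For the all-negative state $\alpha(s_-)=-c$, giving $m(\langle s_-(D)\rangle)=-c-2|s_-(D)|$, the claimed value.

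The main work, and the key to the first equality, is the second displayed inequality, which I read as $m(\langle s(D)\rangle)\ge m(\langle s_-(D)\rangle)+4$ for every $s\ne s_-$. Granting it, every state other than $s_-$ contributes only terms of degree $\ge -c-2|s_-(D)|+4$, so the lowest-degree term $(-1)^{|s_-(D)|}A^{-c-2|s_-(D)|}$ coming from $s_-$ cannot be cancelled. Hence $m(\langle D\rangle)=m(\langle s_-(D)\rangle)=-c-2|s_-(D)|$.

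To prove the inequality I would argue along a sequence of single crossing changes, driven by two observations. First, changing the smoothing at one crossing alters $|s(D)|$ by exactly $\pm1$, since locally it either merges two circles into one or splits one into two. Second, and this is where minus-adequacy enters, the hypothesis that $\mathbb{G}_-(D)$ has no loop says precisely that at every crossing the two arcs of $s_-(D)$ lie on distinct circles; hence changing any single crossing starting from $s_-$ merges two circles and lowers $|s(D)|$ by one. Now set $d(s)=\alpha(s)-2|s(D)|$ and connect $s_-$ to an arbitrary $s$ by a path $s_-=s^{(0)},s^{(1)},\dots,s^{(k)}=s$ in which each state is obtained from the previous by flipping one crossing from $-1$ to $+1$. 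At each step $\alpha$ increases by $2$ while $|s(D)|$ changes by $\pm1$, so $d(s^{(j+1)})-d(s^{(j)})=2\mp2\in\{0,4\}$, and thus $d$ is non-decreasing along the path. By the adequacy observation the first step is a merge, contributing $+4$, so $d(s)\ge d(s^{(1)})=d(s_-)+4$; when $s$ differs from $s_-$ in exactly one crossing the path has length one and the single $+4$ step gives equality.

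I expect the main obstacle to be the bookkeeping that promotes the local ``$\pm1$ circle change'' to global monotonicity of $d$, and in particular the clean reduction of minus-adequacy to the statement that the very first flip out of $s_-$ must be a merge; once those are in place, the rest is the routine state-sum estimate and the non-cancellation argument for the bottom coefficient.
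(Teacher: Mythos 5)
Your proof is correct and is exactly the standard state-sum argument of Kauffman--Lickorish--Thistlethwaite: the paper does not prove this proposition but cites Thistlethwaite, whose argument proceeds the same way (expand the bracket over states, note that a single resmoothing changes $|s(D)|$ by exactly $\pm 1$, and use minus-adequacy to force the first flip out of $s_-$ to be a merge, after which the quantity $\alpha(s)-2|s(D)|$ is non-decreasing). Your reading of the displayed inequality as $m(\langle s(D)\rangle)\ge m(\langle s_-(D)\rangle)+4$ is the right one, being the only reading consistent with the equality clause and with $s_-$ realizing the minimal degree.
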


Our work here can be considered as a generalization of the previous fact. We are interested in the list of coefficients of the Jones polynomial. For this reason we need the following definition.

Let $P_1(q)$ and $P_2(q)$ be power series in $\mathbb{Z}[q^{-1}][[q]]$. For a non-negative integer $n$, we say that $P_1$ and $P_2$ are \textit{ $n$ equivalent} and write $P_1(q)\doteq_n P_2(q)$, if their first $n$ coefficients agree up to a sign. For instance, $-q^{-4} + 4q
^{-3} - 6 + 11 q \doteq_5 1 - 4q + 6q^4$. If $P_1(q) \doteq_n P_2(q)$ for every integer $n \geq 0$, then we simply write $P_1(q) \doteq P_2(q)$.
\begin{definition}
\label{stab}
	Let $\mathcal{P}=\{P_n(q)\}_{n \in \mathbb{N}}$ be a sequence of formal power series in $\mathbb{Z}[q^{-1}][[q]]$ and let $f: \mathbb{N} \longrightarrow \mathbb{N}$ be an increasing function. We say that the sequence $\mathcal{P}$ stabilizes with rate $f$  if there exists a formal power series $T_{\mathcal{P}}(q)$ in $\mathbb{Z}[[q]]$ that satisfies
	\begin{equation*}
	\label{main defintion}
	T_{\mathcal{P}}(q)\doteq_{f(n)}P_n(q), \text{ for all } n \in \mathbb{N}.
	\end{equation*}
	\end{definition}
	
Note that the sequence $\mathcal{P}$ stabilizes with rate $f$ if and only if $P_n(q)\doteq_{f(n)}P_{n+1}(q)$ for all $n \geq 1$.  We call the function $f$ the \textit{rate of stability} of the sequence $\mathcal{P}$ and we call the power series $T_{\mathcal{P}}(q)$ the \textit{tail} of $\mathcal{P}$. If the tail $T_{\mathcal{P}}(q)$ of a sequence $\mathcal{P}$ exists then it is independent from the rate of stability. Note that if $\mathcal{P}$ stabilizes with rate $f$ then it also stabilizes with any rate $g$ such that $g \leq f$. We say that the rate of stabilization $f$ is maximal for the sequence $\mathcal{P}$ if stabilizes with rate $f$ but it does not stabilize for any rate $g$ such that $g > f$.

 We will show that the Jones polynomial of a sequence of alternating links parametrized by the number of twists in a maximal negative twist regions has a well-defined tail in the sense of the Definition \ref{stab}. For this purpose we need the following notion.	
	
\subsection{Twist Regions}
Let $L$ be a link diagram. Suppose that $L$ has $r$ maximal negative twist regions labeled by $1, \ldots, r$. Let $k_i$ be the number of negative crossings in the region $i$. We will denote by $L(k_1,\ldots,k_r)$ the link diagram $L$ with $r$ labeled maximal negative twist regions such that the $i^{th}$ region has $k_i \geq 1$ crossings. In the case when we are interested in a subset of the total sets of $r$ twist regions we will only label these regions that we are interested in and use the same notation above to denote the link diagram with the labeled twist regions. In particular, when we are interested merely in a single maximal negative twist region with $k$ crossings in the diagram $L$ then we will refer to this link diagram by $L_{k}$ and refer to the twist region we are interested in as \textit{the marked twist region} of the diagram $L$. Figure \ref{example0} shows a link diagram with a total of $4$ maximal negative twist regions labeled by two different methods.
 \begin{figure}[H]
	\centering
	{\includegraphics[scale=0.23]{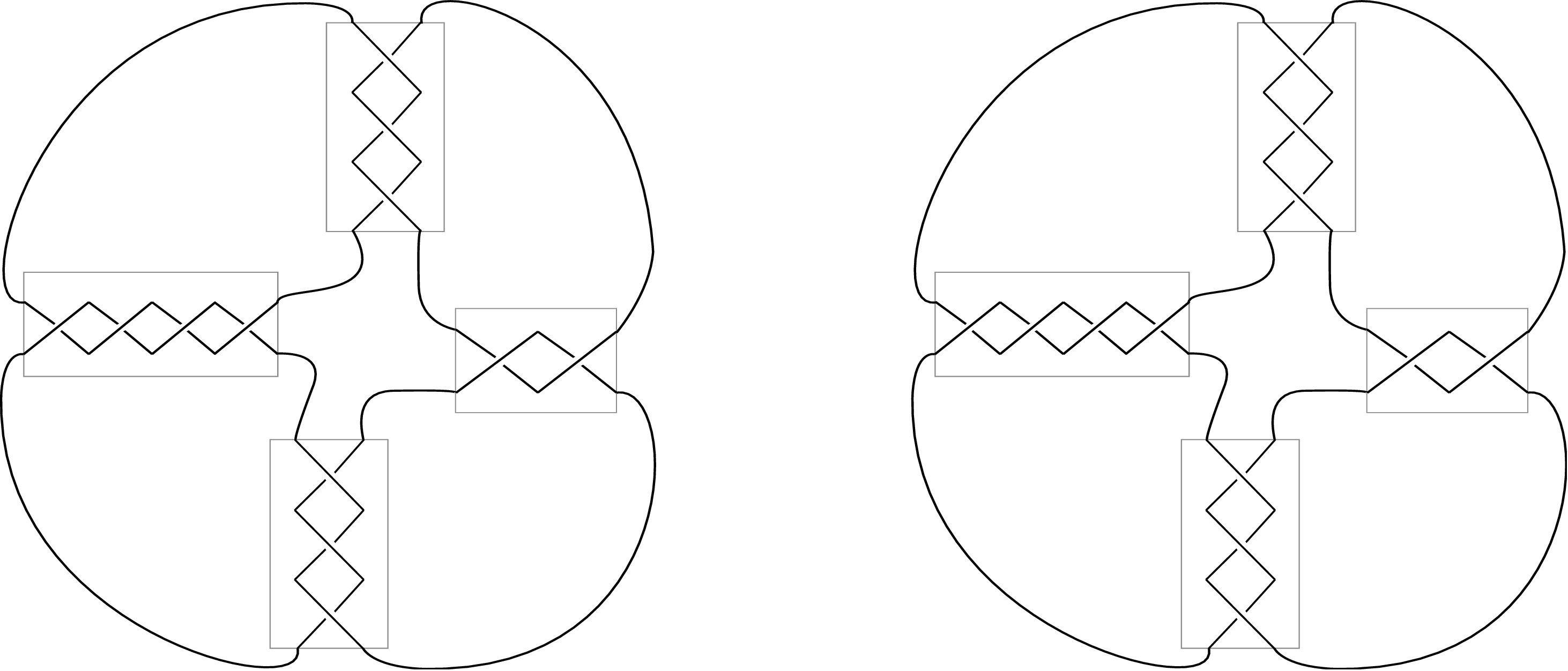}
	\put(-340,+93){\footnotesize{$L^{\prime}$}}
	\put(-143,+93){\footnotesize{$L$}}
		\put(-300,+83){\footnotesize{$1$}}
		\put(-210,75){\footnotesize{$2$}}
		\put(-275,8){\footnotesize{$3$}}
		\put(-125,+83){\footnotesize{$1$}}
		\put(-75,+100){\footnotesize{$2$}}
		\put(-35,75){\footnotesize{$3$}}
		\put(-85,8){\footnotesize{$4$}}
		\caption{The link $L^{\prime} =L^{\prime}(4,2,3)$ on the left and the link $L=L(4,3,2,3)$ on the right.}
		\label{example0}}
\end{figure}
Let $L=L(k_1,\ldots,k_r)$ be an alternating link diagram such that $k_i \geq 1$. Suppose that $b_i \in \mathbb{Z}$ such that  $k_i+b_i \geq 0$. The link diagram $L(k_1+b_1,\ldots,k_r+b_r)$ is the link diagram obtained from $L(k_1,\ldots,k_r)$ by replacing the i-th twist region which has $k_i$ negative crossings with a twist region with $k_i+b_i$ negative crossings. In the case when $k_i+b_i=0$ then we replace the i-th maximal twist region by the negative smoothing as illustrated in Figure \ref{replacement}.

 \begin{figure*}[htb]
 	\centering
 	{\includegraphics[scale=0.4]{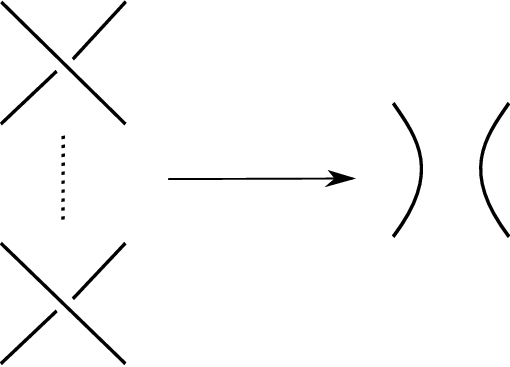}
 		\caption{Replacing a maximal twist region by the negative smoothing when the number of this twist region becomes zero.  }
 		\label{replacement}}
 \end{figure*}

\begin{theorem}\label{KauffAlt}
\label{new greatness}
Let $L=L_k$ be an alternating link diagram with a marked maximal negative twist region with $k \geq 1$ crossings. Then,
\begin{equation}
 \langle L_{k} \rangle  \doteq_{4k} \langle L_{k-1} \rangle.
\end{equation} 
\end{theorem}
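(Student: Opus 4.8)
The plan is to resolve the marked twist region algebraically in the Temperley--Lieb algebra $TL_2$ and then track minimal degrees using adequacy. Writing $\sigma^{-1}$ for a single negative crossing viewed as an element of $TL_2$, the Kauffman relation gives $\sigma^{-1} = A^{-1}\mathbf{1} + A\,e$, where $\mathbf 1$ is the identity tangle (two through-strands) and $e$ is the turnback, with $e^2 = (-A^2-A^{-2})e$. A short induction using the recursion $c_k = A^{-k+2} - A^3 c_{k-1}$ then yields
\[
(\sigma^{-1})^{k} = A^{-k}\mathbf 1 + c_k\, e, \qquad c_k = \sum_{j=0}^{k-1}(-1)^j A^{-k+2+4j}.
\]
Substituting this into the marked region of $L_k$ produces
\[
\langle L_k\rangle = A^{-k}\langle D_1\rangle + c_k\langle D_e\rangle,
\]
where $D_1$ (resp. $D_e$) is the $k$-independent diagram obtained from $L$ by replacing the marked region with two through-strands (resp. with the turnback); note that $D_1=L_0$ since the negative smoothing of the region is exactly the through-strand smoothing.

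Next I would eliminate $\langle D_1\rangle$ between the expansions for $L_k$ and $L_{k-1}$. Using $c_k - A^{-1}c_{k-1} = (-1)^{k-1}A^{3k-2}$ --- the telescoping collapse of the two sums to a single monomial is the one genuinely pleasant computation --- this gives the key identity
\[
\langle L_k\rangle = A^{-1}\langle L_{k-1}\rangle + (-1)^{k-1}A^{3k-2}\langle D_e\rangle .
\]
Since multiplication by the monomial $A^{-1}$ does not alter the coefficient sequence, and $\langle L_{k-1}\rangle$ shares its coefficient sequence with $A^{-1}\langle L_{k-1}\rangle$, the theorem reduces to showing that the error term $(-1)^{k-1}A^{3k-2}\langle D_e\rangle$ cannot disturb the first $4k$ coefficients, i.e. that its minimal degree exceeds $m(\langle L_k\rangle)$ by at least $4k$.

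The degree bookkeeping is where the alternating hypothesis enters. Because $L$ is alternating, $L_1$ is adequate, and $\mathbb{G}_{-}(D_1)$ is obtained from $\mathbb{G}_{-}(L_1)$ by deleting the edge of the marked crossing; being a subgraph of a loopless graph it is loopless, so $D_1$ is minus-adequate and $m(\langle D_1\rangle)=m_1:=-c_0-2|s_-(D_1)|$, where $c_0$ is the number of crossings off the marked region. The all-negative state of $D_1$ coincides with $s_-(L_1)$, and by minus-adequacy the marked crossing joins two \emph{distinct} circles there; passing to the turnback smoothing merges those two circles, so $|s_-(D_e)|=|s_-(D_1)|-1$. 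Invoking only the universal lower bound $m(\langle D\rangle)\ge -c-2|s_-(D)|$ (which needs no adequacy of $D_e$), I get $m(\langle D_e\rangle)\ge m_1+2$.

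Finally I would assemble the estimate. The term $A^{-k}\langle D_1\rangle$ has minimal degree $-k+m_1$, while $c_k\langle D_e\rangle$ has minimal degree at least $(-k+2)+(m_1+2)=-k+m_1+4$, so the bottom of $\langle L_k\rangle$ comes solely from the $D_1$ term and $m(\langle L_k\rangle)=-k+m_1$. The error term has minimal degree at least $(3k-2)+(m_1+2)=3k+m_1$, which exceeds $m(\langle L_k\rangle)$ by exactly $4k$; hence $\langle L_k\rangle$ and $A^{-1}\langle L_{k-1}\rangle$ agree in their first $4k$ coefficients, giving $\langle L_k\rangle\doteq_{4k}\langle L_{k-1}\rangle$ and automatically covering the base case $k=1$ (where $D_1=L_0$). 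The main obstacle is the degree comparison $m(\langle D_e\rangle)\ge m_1+2$: everything hinges on knowing that switching the marked smoothing to the turnback strictly decreases the circle count of the all-negative state, which is precisely the adequacy of the underlying alternating diagram.
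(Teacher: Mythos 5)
Your proof is correct and follows essentially the same route as the paper's: both hinge on the identity $\langle L_k\rangle = A^{-1}\langle L_{k-1}\rangle + (-1)^{k-1}A^{3k-2}\langle D_e\rangle$ (which the paper obtains by smoothing one crossing and untwisting rather than by your $TL_2$ telescoping) and on the observation that minus-adequacy forces the turnback to merge two distinct circles of the all-negative state, so that the error term sits $4k$ degrees above the bottom. The only substantive difference is cosmetic: the paper applies the exact Thistlethwaite degree formula to the alternating diagram $L'=D_e$, whereas you more economically invoke only the universal lower bound $m(\langle D\rangle)\ge -c-2|s_-(D)|$ for $D_e$.
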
	
\begin{proof}
Using the Kauffman skein relation to smooth one of the crossings in the marked twist region gives:

   \begin{eqnarray*}\langle L_{k} \rangle &=&  A \quad \begin{minipage}[h]{0.1\linewidth}
        \vspace{0pt}
        \scalebox{0.18}{\includegraphics{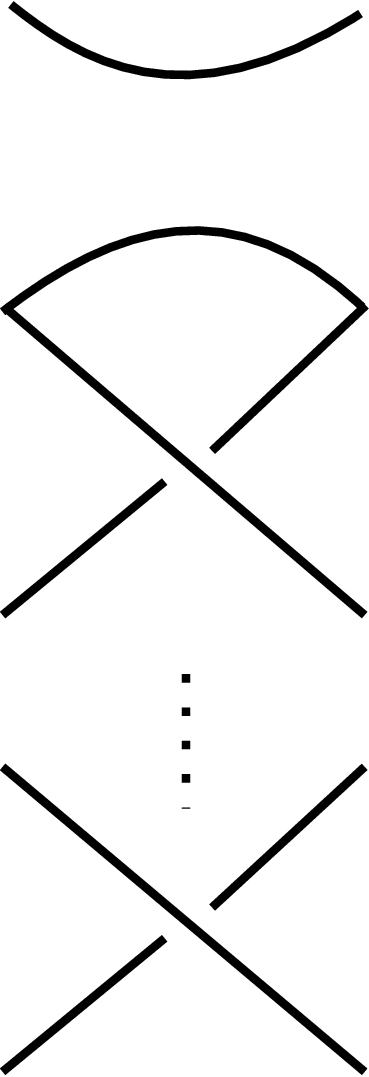}}
        \put(-42,+30){\footnotesize{$k-1$}}
   \end{minipage}
   + A^{-1}   
    \begin{minipage}[h]{0.01\linewidth}
        \vspace{0pt}
        \scalebox{0.18}{\includegraphics{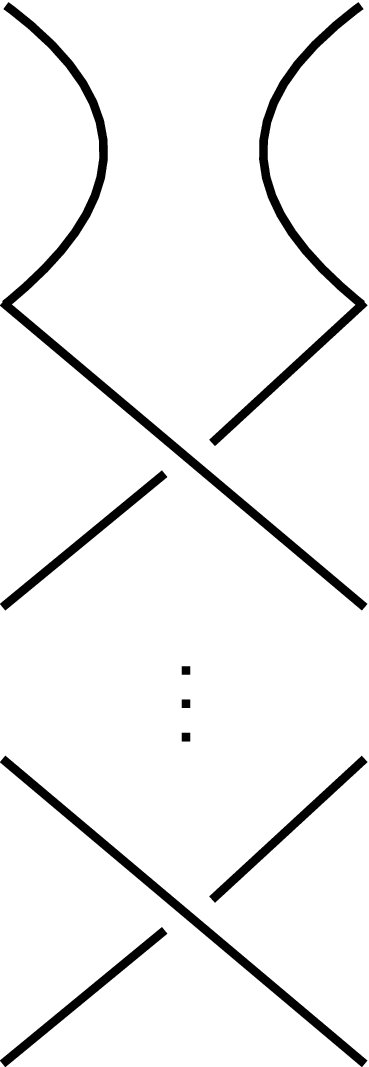}}
             \put(-0,+30){\footnotesize{$k-1$}}
   \end{minipage}
 \end{eqnarray*}
Untwisting the regions in the first term in the previous equation we obtain: 

\begin{eqnarray}
\label{mainequation}
 \langle L_{k} \rangle &=& A (-A^3)^{k-1} \quad \begin{minipage}[h]{0.1\linewidth}
        \vspace{0pt}
        \scalebox{0.18}{\includegraphics{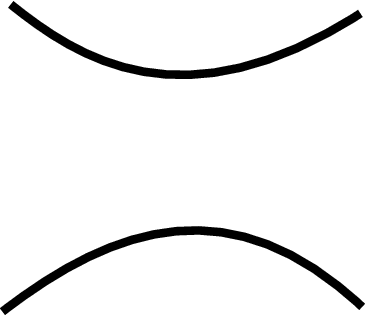}}
   \end{minipage} + A^{-1} \langle L_{k-1} \rangle. 
 \end{eqnarray}  
  
Let $L^{\prime}$ be the link obtained from $L_k$ in the first term in equation \ref{mainequation}. Figure \ref{smoothings1} shows that both $L^{\prime}$ and $L_{k-1}$ are alternating.
\begin{figure}[H]
  \centering
   {\includegraphics[scale=0.14]{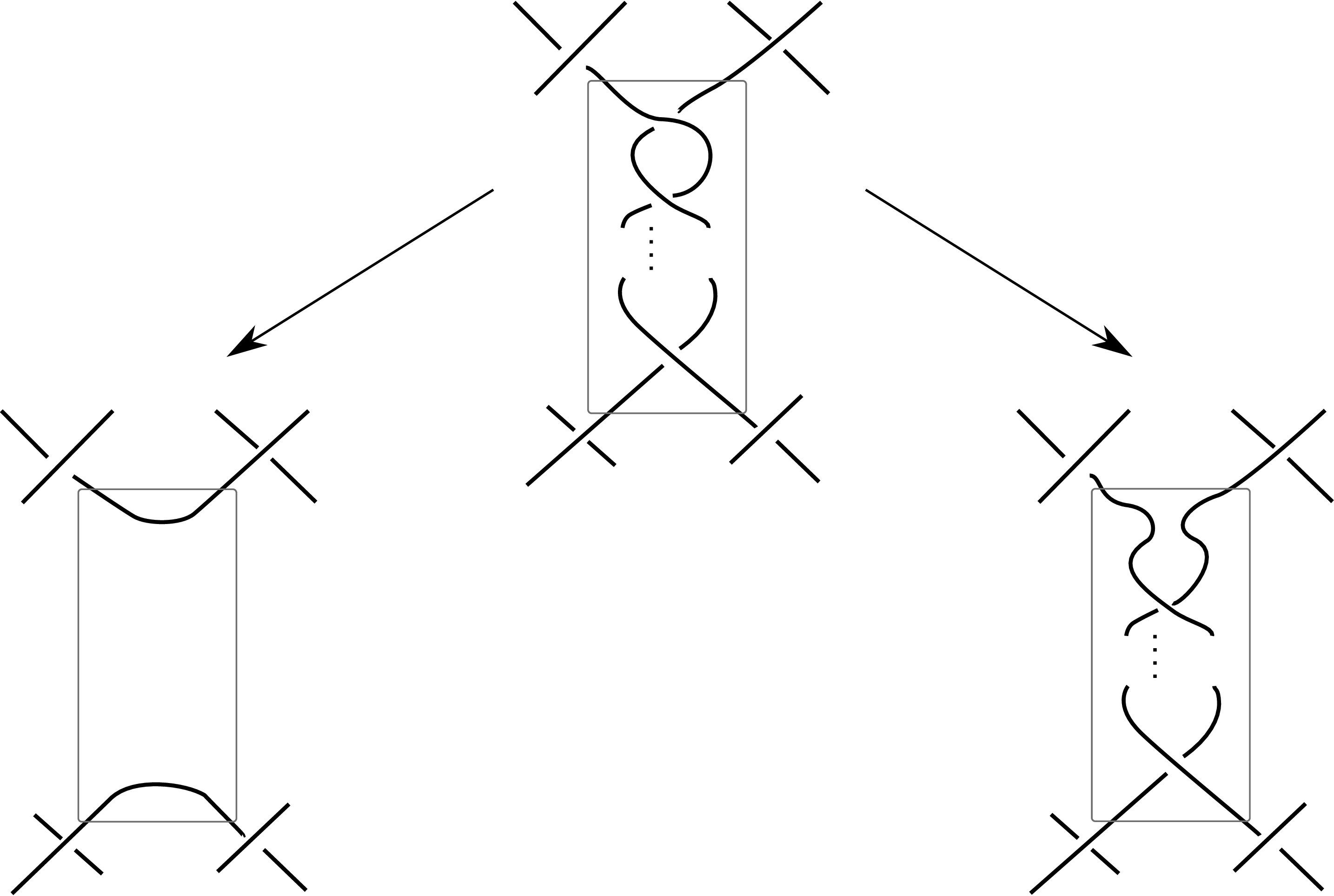}
    \put(-188,83){$L^{\prime}$}
          \put(-30,83){$L_{k-1}$}
    \caption{Giving a maximal negative twist region in an alternating link $L=L_k$, the two links $L^{\prime}$ and $L_{k-1}$ obtained from $L$ are alternating.}
  \label{smoothings1}}
\end{figure} 
Assuming that the total number of crossings in the link diagram $L_k$ is $c$, then the numbers of crossings in the link diagrams $L_{k-1}$ and $L^{\prime}$ are $c-1$ and $c-k$ respectively. Furthermore, it is clear that $|s_{-}(L_k)|=|s_{-}(L_{k-1})|$ and $|s_{-}(L^{\prime})|=|s_{-}(L_k)|-1$.

Thus by Proposition \ref{mainprop1}: 

\begin{eqnarray*} m ( A (-A^3)^{k-1} \langle L^{\prime} \rangle)&=&(3k-2)-(c-k)-2(|s_{-}(L_k)|-1)\\&=&4k-c-2|s_{-}(L_k)|,
 \end{eqnarray*}  
 and
\begin{eqnarray*} m ( A^{-1} \langle  L^{k-1} \rangle )&=&(-1)-(c-1)-2|s_{-}(L_k)|)\\&=&-c-2|s_{-}(L_k)|=m(L_k).
 \end{eqnarray*} 
Hence, by equation (\ref{mainequation}), the first $4k$ terms of $A^{-1} \langle L_{k-1} \rangle$ do not get canceled by any term in the expansion of $-A^{3k-2} \langle L^{\prime} \rangle$. This completes the proof.   
\end{proof}
Formulas for the Kauffman bracket under twisting can be also found in \cite{CK1,CK2}. Equation (\ref{change of variable}) and the previous theorem \ref{KauffAlt} imply immediately the following.

\begin{corollary}
\label{1}
Let $L=L_k$ be an alternating link diagram with a marked maximal negative twist region with $k \geq 1$ crossings. Then the sequence $\{J_2(L_{k+i}) \}_{i \in \mathbb{N}}$ has a well-defined tail.
\end{corollary}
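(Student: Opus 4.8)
The plan is to deduce the corollary directly from Theorem \ref{KauffAlt} together with the normalization (\ref{change of variable}), so that the statement about $J_2$ is reduced to the bracket estimate already established. First I would invoke the remark following Definition \ref{stab}: the sequence $\mathcal{P}=\{J_2(L_{k+i})\}_{i\in\mathbb{N}}$ has a well-defined tail as soon as it stabilizes with some increasing rate $f$, and this is equivalent to producing an increasing function $f:\mathbb{N}\to\mathbb{N}$ with
\[
J_2(L_{k+i})\doteq_{f(i)}J_2(L_{k+i+1})\qquad\text{for all }i\in\mathbb{N}.
\]
Thus the whole task is to turn the bracket inequality of Theorem \ref{KauffAlt} into such a relation between the normalized $q$-series.

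Next I would apply Theorem \ref{KauffAlt} to the diagram carrying $k+i+1$ crossings in the marked region. Since $L_{k+i}$ is obtained from $L_{k+i+1}$ by lowering the marked region by one crossing, the theorem gives
\[
\langle L_{k+i+1}\rangle\doteq_{4(k+i+1)}\langle L_{k+i}\rangle,
\]
so the brackets themselves already stabilize, with the strictly increasing rate $i\mapsto 4(k+i+1)$ measured in the variable $A$.

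It remains to transport this agreement through the passage $\langle L_m\rangle\mapsto J_2(L_m)=\big(\langle L_m\rangle/\Delta_1\big)\big|_{A=q^{1/4}}$ coming from (\ref{change of variable}) with $n=1$. Here $\Delta_1=-(A^2+A^{-2})$ is a fixed Laurent polynomial independent of $m$, and $1/\Delta_1=-A^{2}\sum_{j\ge0}(-1)^jA^{4j}$ has a nonzero term of lowest degree. Multiplication by such a series acts on the coefficient sequence (indexed by degree and read upward from the bottom) in a lower-triangular manner with nonvanishing diagonal, so the lowest $N$ coefficients of $\langle L_m\rangle/\Delta_1$ depend only on the lowest $N$ coefficients of $\langle L_m\rangle$; hence $\doteq_{N}$ is preserved, together with its global sign, after dividing by $\Delta_1$. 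Finally the substitution $A=q^{1/4}$ rescales $A$-degrees to $q$-degrees by the factor $\tfrac14$: the exponents occurring in $\langle L_m\rangle/\Delta_1$ lie in a single residue class modulo $4$, so the surviving coefficients correspond bijectively to consecutive $q$-coefficients, and an agreement of the lowest $4N$ coefficients in $A$ becomes an agreement of the lowest $N$ coefficients in $q$. This yields $J_2(L_{k+i})\doteq_{f(i)}J_2(L_{k+i+1})$ with $f(i)\sim k+i$ strictly increasing, and by the first paragraph the tail $T_{\mathcal{P}}(q)$ is well defined.

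The main obstacle I expect is precisely this last bookkeeping: checking that dividing by the fixed factor $\Delta_1$ and substituting $A=q^{1/4}$ neither cancels nor re-indexes the stable coefficients, and that the global signs implicit in $\doteq_{N}$ can be chosen coherently along the sequence, so that a single series in $\mathbb{Z}[[q]]$ (rather than a sign-ambiguous family) records the common stable part. Both points are controlled by the explicit leading term $A^{-1}\langle L_{k-1}\rangle$ appearing in equation (\ref{mainequation}) of the proof of Theorem \ref{KauffAlt}, which fixes the degree shift and the sign at each stage; writing this out is routine but is where the genuine content of the reduction lies.
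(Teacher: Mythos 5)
Your proposal is correct and follows the same route as the paper, which simply states that Corollary \ref{1} follows immediately from Theorem \ref{KauffAlt} together with the normalization (\ref{change of variable}). The only difference is that you spell out the bookkeeping (division by the fixed factor $\Delta_1$ acting lower-triangularly on the coefficient list, and the exponent re-indexing under $A=q^{1/4}$) that the paper leaves implicit.
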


Now assume that an alternating link $L=L(k_1,k_2)$ with $k_1,k_2 \geq 1 $ has two marked maximal twist regions and we want to compare the first few coefficients of the Jones polynomial $L(k_1-1,k_2-1)$ with $L=L(k_1,k_2)$. By the previous theorem one has
\begin{equation*}
\langle L(k_1,k_2)\rangle \doteq_{4k_1} \langle L(k_1-1,k_2)\rangle \doteq_{4k_2} \langle L(k_1-1,k_2-1)\rangle .
\end{equation*}
Hence,
\begin{equation*}
\langle L(k_1,k_2)\rangle \doteq_{4k} \langle L(k_1-1,k_2-1) \rangle, 
\end{equation*}
where $k:=\min(k_1,k_2)$. In general we have the following.
\begin{theorem}
\label{2}
Let $L=L(k_1, \ldots,k_r)$ be an alternating link diagram. Then
\begin{equation}
\langle L(k_1,\ldots, k_r)\rangle \doteq_{4k} \langle L(k_1-1,\ldots, k_r-1) \rangle,
\end{equation}
where $k=\min_{1 \leq i \leq r }(k_i)$.
\end{theorem}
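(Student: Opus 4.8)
The plan is to deduce the $r$--region statement from the single--region Theorem~\ref{KauffAlt} by telescoping, exactly as in the $r=2$ computation preceding it. The only auxiliary fact needed is that the relation $\doteq_n$ chains with the min rule: if $P_1\doteq_a P_2$ and $P_2\doteq_b P_3$, then $P_1\doteq_{\min(a,b)}P_3$. This is immediate from the definition: writing $c_j(P)$ for the $j$-th coefficient of $P$ read off from its lowest-degree term, the two hypotheses give global signs $\epsilon_1,\epsilon_2\in\{\pm 1\}$ with $c_j(P_1)=\epsilon_1 c_j(P_2)$ for $j\le a$ and $c_j(P_2)=\epsilon_2 c_j(P_3)$ for $j\le b$, so $c_j(P_1)=\epsilon_1\epsilon_2\, c_j(P_3)$ for all $j\le\min(a,b)$.

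Next I would interpolate between the two diagrams by shrinking the regions one at a time. Set
\[
L^{(i)}=L(k_1-1,\ldots,k_i-1,\,k_{i+1},\ldots,k_r),\qquad 0\le i\le r,
\]
so that $L^{(0)}=L(k_1,\ldots,k_r)$ and $L^{(r)}=L(k_1-1,\ldots,k_r-1)$. The diagrams $L^{(i-1)}$ and $L^{(i)}$ differ only in the $i$-th region, which carries $k_i$ crossings in $L^{(i-1)}$ and $k_i-1$ in $L^{(i)}$; thus, marking that region, $L^{(i-1)}$ and $L^{(i)}$ play the roles of $L_{k_i}$ and $L_{k_i-1}$ of Theorem~\ref{KauffAlt}, which yields
\[
\langle L^{(i-1)}\rangle \doteq_{4k_i} \langle L^{(i)}\rangle .
\]
Chaining these $r$ relations with the min rule gives $\langle L^{(0)}\rangle \doteq_{\min_i 4k_i}\langle L^{(r)}\rangle$, and since $\min_i 4k_i=4\min_i k_i=4k$ this is the desired conclusion.

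The step that requires care --- and where I expect the only real obstacle --- is checking that each intermediate diagram $L^{(i-1)}$ genuinely satisfies the hypotheses of Theorem~\ref{KauffAlt}: it must be alternating, and its $i$-th region must remain a maximal negative twist region with $k_i$ crossings. Alternating-ness propagates down the chain because reducing a maximal negative twist region by one crossing, or replacing it by its negative smoothing when its count reaches zero (the case $k_j=1$), produces an alternating diagram --- this is precisely the observation made for $L_{k-1}$ in the proof of Theorem~\ref{KauffAlt} and illustrated in Figure~\ref{smoothings1}. Since the $r$ twist regions are disjoint, shrinking regions $1,\ldots,i-1$ leaves the $i$-th region and its $k_i$ crossings untouched, so it is still a maximal negative twist region when Theorem~\ref{KauffAlt} is applied. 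Granting this bookkeeping, the proof is a routine telescoping, and one may equally phrase it as an induction on $r$ with the single-region theorem as the inductive step.
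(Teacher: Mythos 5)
Your proof is correct and follows essentially the same route as the paper: the paper carries out exactly this telescoping explicitly for $r=2$ (chaining $\doteq_{4k_1}$ and $\doteq_{4k_2}$ via the min rule) and then states the general case, which is your induction on the regions. Your additional bookkeeping --- verifying that each intermediate diagram stays alternating and that the untouched regions remain maximal negative twist regions --- is a point the paper leaves implicit, and it is handled correctly.
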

The previous theorem along with equation (\ref{change of variable}) immediately imply the following Corollary.

\begin{corollary}
\label{3}
Let $L=L(k_1,\ldots,k_r)$ be an alternating link diagram. Then the sequence $\{J_2(L(k_1+i,\ldots,k_r+i)) \}_{i \in \mathbb{N}}$ has a well-defined tail.
\end{corollary}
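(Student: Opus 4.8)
The plan is to reduce the statement to the Kauffman-bracket stabilization already recorded in Theorem \ref{2}, and then to transport that stabilization through the change of variable (\ref{change of variable}) that produces $J_2$ from the bracket. This is exactly the multi-region analogue of the single-twist Corollary \ref{1}, with Theorem \ref{2} playing the role that Theorem \ref{KauffAlt} played there.

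First I would fix notation: write $L_{(i)} := L(k_1+i,\ldots,k_r+i)$ and $k_{\min} := \min_{1\le j\le r} k_j$, so that $\min_{1\le j \le r}(k_j+i) = k_{\min}+i$. Applying Theorem \ref{2} to the diagram $L_{(i+1)}$ gives, for every $i \ge 0$,
\[
\langle L_{(i+1)} \rangle \doteq_{4(k_{\min}+i+1)} \langle L_{(i)} \rangle .
\]
Thus at the level of Kauffman brackets the sequence stabilizes with the strictly increasing, unbounded rate $i \mapsto 4(k_{\min}+i+1)$.

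Next I would push this down to the Jones polynomial via (\ref{change of variable}) with $n=1$. Here $\tilde J_{1,L_{(i)}}(A) \doteq \langle S_1(D)\rangle = \langle L_{(i)}\rangle$, since cabling by $f^{(1)}$ is a single strand, and $\Delta_1 = -(A^2+A^{-2})$, so that $J_2(L_{(i)}) = \langle L_{(i)}\rangle / \Delta_1 \big|_{A = q^{1/4}}$ up to the sign and framing factor that $\doteq$ ignores. The key technical point, and what I expect to be the main obstacle, is a lemma asserting that this fixed operation transforms $n$-equivalence in a controlled way: if $\langle L_{(i+1)}\rangle \doteq_{4m} \langle L_{(i)}\rangle$, then $J_2(L_{(i+1)}) \doteq_{m} J_2(L_{(i)})$ (up to a bounded additive constant). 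For the division step one expands $1/\Delta_1 = -A^2\bigl(1 - A^4 + A^8 - \cdots\bigr)$ as a power series whose lowest coefficient is invertible; since the lowest $N$ coefficients of a product of power series depend only on the lowest $N$ coefficients of each factor, multiplication by $1/\Delta_1$ preserves $\doteq_N$ exactly, with the global sign preserved because both brackets are multiplied by the same series. For the substitution step one uses that $A\mapsto q^{1/4}$ rescales the $A$-degree by the factor $1/4$; because the bracket is supported in a single residue class of exponents, the $4m$ agreeing lowest $A$-coefficients collapse to $m$ agreeing lowest $q$-coefficients. The only care required is the bookkeeping of exponents modulo the relevant period so that no spurious fractional powers of $q$ appear and the rate is rescaled correctly; this is routine but is the one place where the argument is not purely formal.

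Finally I would conclude. Combining the two steps yields $J_2(L_{(i+1)}) \doteq_{\,k_{\min}+i+1} J_2(L_{(i)})$ for all $i \ge 0$, so the sequence $\mathcal P = \{J_2(L_{(i)})\}_{i\in\mathbb N}$ satisfies $P_i \doteq_{f(i)} P_{i+1}$ with the increasing unbounded rate $f(i) = k_{\min}+i+1$. By the characterization recorded immediately after Definition \ref{stab}, this is equivalent to $\mathcal P$ stabilizing with rate $f$; since $f$ is increasing and unbounded, the associated power series $T_{\mathcal P}(q) \in \mathbb Z[[q]]$ is uniquely determined, independent of the rate, and is precisely the desired well-defined tail.
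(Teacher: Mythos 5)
Your proposal is correct and follows essentially the same route as the paper, which derives this corollary in one line from Theorem \ref{2} together with the change of variable (\ref{change of variable}); you have simply spelled out the details (division by the fixed unit power series $1/\Delta_1$, the degree rescaling under $A\mapsto q^{1/4}$, and the passage from an increasing unbounded rate to a well-defined tail) that the paper treats as immediate.
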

We will denote by $T_{2,L(k_1,\ldots,k_r)}$ the tail in Corollary \ref{3} associated with the alternating link diagram $L(k_1,\ldots,k_r)$ and the Jones polynomial.

The following Corollary follows from Theorem \ref{new greatness}.

\begin{corollary}\label{rate}
Let $L=L(k_1,\ldots,k_r)$ be an alternating link diagram. Then the sequence $\{J_2(L(k_1+i,\ldots,k_r+i)) \}_{i \in \mathbb{N}}$ stabilizes with rate $k+i+1$ where $k=\min_{1 \leq j \leq r }(k_j)$.
\end{corollary}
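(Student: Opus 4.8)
The plan is to deduce this from Theorem \ref{2} together with the change-of-variable relation (\ref{change of variable}), exactly as Corollary \ref{3} was obtained, but now keeping explicit track of the rate. By the remark following Definition \ref{stab}, a sequence $\{P_i\}$ stabilizes with rate $f$ if and only if $P_i \doteq_{f(i)} P_{i+1}$ for every $i$. Writing $P_i := J_2(L(k_1+i,\ldots,k_r+i))$ and noting that $f(i)=k+i+1$ is increasing in $i$, it therefore suffices to establish
\begin{equation*}
J_2(L(k_1+i,\ldots,k_r+i)) \doteq_{k+i+1} J_2(L(k_1+i+1,\ldots,k_r+i+1))
\end{equation*}
for all $i$.

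First I would apply Theorem \ref{2} with each $k_j$ replaced by $k_j+i+1$. Since $\min_j(k_j+i+1)=k+i+1$, this yields the Kauffman bracket congruence
\begin{equation*}
\langle L(k_1+i+1,\ldots,k_r+i+1)\rangle \doteq_{4(k+i+1)} \langle L(k_1+i,\ldots,k_r+i)\rangle .
\end{equation*}
It then remains to convert this statement about brackets (series in $A$) into the desired statement about $J_2$ (series in $q$). Here I would use that $J_2 = \tilde{J}_{1,L}/\Delta_1\big|_{A=q^{1/4}}$ from (\ref{change of variable}), with $\tilde{J}_{1,L}\doteq\langle D\rangle$ (the first Jones--Wenzl idempotent being a single strand) and $\Delta_1=-(A^2+A^{-2})$.

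The heart of the matter, and the step I expect to require the most care, is tracking how the rate $4(k+i+1)$ in the variable $A$ becomes the rate $k+i+1$ in the variable $q$. Two observations are needed. First, every exponent appearing in a Kauffman bracket $\langle D\rangle$ lies in a single residue class modulo $4$: a state $s$ with $a$ positive smoothings among $c$ crossings contributes exponents $\equiv 2a-c+2|s(D)|\pmod 4$, and flipping one smoothing changes this quantity by $-2\pm 2\in\{0,-4\}$. Hence under $A=q^{1/4}$ the bracket becomes $q^{\delta/4}$ times an honest power series in $q$, and the $4(k+i+1)$ lowest $A$-coefficients, only every fourth of which can be nonzero, collapse to exactly the $k+i+1$ lowest $q$-coefficients; this is where the factor of $4$ is produced. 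Second, dividing by $\Delta_1$ amounts to multiplying by $\tfrac{1}{\Delta_1}\big|_{A=q^{1/4}}=-q^{1/2}(1-q+q^2-\cdots)$, a unit power series (invertible leading coefficient $\pm 1$) times an overall monomial; multiplication by such a unit preserves $\doteq_n$, since the $m$-th coefficient of the product depends only on the first $m+1$ coefficients of each factor, and the overall monomial and sign are absorbed by $\doteq$. Combining these shows the bracket congruence descends to $P_i \doteq_{k+i+1} P_{i+1}$, which by the first paragraph is exactly the claim. The only genuine subtlety is the bookkeeping in the first observation, namely verifying that ``the first $4N$ coefficients in $A$'' and ``the first $N$ coefficients in $q$'' correspond under the substitution, which rests entirely on the mod-$4$ congruence of the bracket exponents.
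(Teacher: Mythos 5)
Your proposal is correct and follows the same route the paper intends: Corollary \ref{rate} is stated there without proof beyond the remark that it follows from Theorem \ref{new greatness} (via Theorem \ref{2} and the change of variable (\ref{change of variable})), which is exactly your argument. The one piece you supply that the paper leaves implicit --- that the bracket's exponents lie in a single residue class mod $4$, so a rate of $4N$ in $A$ becomes a rate of $N$ in $q$, and that dividing by $\Delta_1$ is multiplication by a monomial times a unit power series --- is correctly carried out.
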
 
The rate of stability for the sequence specified in Corollary \ref{rate} is maximal. This can be seen by considering the example of $J_2(P(8,6,i)$ where $P(c_1,c_2,c_3)$ is shown in Figure \ref{example0}. The following table shows that $J_2(P(8,6,i) \doteq_{i+1} J_2(P(8,6,i+1))$ for $1 \leq i \leq 3$, but this is not the case for the rate $i+2$.
\\

 \begin{tabular}{c|>{\raggedright\arraybackslash$}p{6.5cm}<{$}c}
 \hline
 \text{The link $P(8, 6, i)$ } & \text{List of lowest $i+2$ of coefficients of $J_2({P(8, 6, i)})$}\\
  \hline
$i=1$ & $1,-1,2$  \\
$i=2$ & $1,-1,3,-3$  \\ 
$i=3$ & $1,-1,3,-4,5$ \\
\hline 
\end{tabular}

\section{The Main Theorems}\label{sec4}
\subsection{The Colored Kauffman Skein Relation and the Minimal Degree of the Colored Jones Polynomial}
The minimum degree of the colored Jones polynomial of alternating links can be computed from the link diagram. For the purpose of this paper we need to state this fact in terms of the identity (\ref{greatness}). This identity (\ref{greatness}) generalizes the Kauffman skein relation. Motivated by this fact, we define the $n+1$ different states for a link diagram $D$ for every positive
integer $n$. More precisely, an $n$-colored state is a function $s^{n}: R_D \longrightarrow \{0,\ldots,n\}$. If $s^{n}$ is a state of $D$ then define $s^{n}(D)$ to be the skein element in $\mathcal{S}(S^2)$ obtained from $D$ by replacing every crossing labeled $0\leq i \leq n $ by the skein element shown in Figure \ref{general smoothing}. 
\begin{figure}[H]
	\centering
	{\includegraphics[scale=0.8]{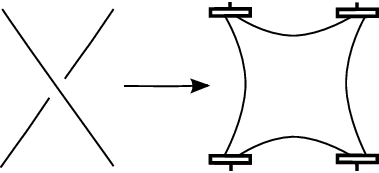}
		\put(-39,+61){\footnotesize{$n-i$}}
		\put(-10,+26){\footnotesize{$i$}}
		\caption{Given $n \geq 1$, there are $n+1$ colored smoothings of a crossing. }
		\label{general smoothing}}
\end{figure}
Furthermore, when it is necessary to specify the assignments of a state $s^{n}$ in the notation we will use $s^{n}(i_1,\ldots,i_k)$ to refer to $s^{n}$ where $ 0 \leq i_1,\ldots,i_k \leq n$ are the assignments for the crossings of $D$ defined by the state $s^{n}$. We will refer to the state which assigns $n$ to every crossing by $s^{n}_-$ and we will refer to the state which assigns $0$ to every crossing by $s^{n}_+$. For a state $s^{n}(i_1,\ldots,i_k)(D)$ of a diagram $D$ crossings define the skein element
\begin{equation}
\langle s^{n}(i_1,\ldots,i_k)(D) \rangle =\prod_{j=0}^k C_{n,i_j} s^{n}(i_1,\ldots,i_k)(D). 
\end{equation}
 It is clear from (\ref{greatness}) that
\begin{equation}
\langle S_n(D) \rangle=\sum_{s^n}  \langle s^{n}(D) \rangle.
\end{equation}
Using the identity (\ref{greatness}), the following was proved in \cite{Hajij3}.  
\begin{proposition} \cite{Hajij3}
\label{general_min}
Let $D$ be a reduced alternating link diagram with $c$ crossings. Then
\begin{equation}
m(\langle S_n(D) \rangle)=m(\langle s^{n}_- (D)\rangle)=-cn^2-2ns_-(D),
\end{equation}
and for any state $s^{n}$ different from $s^{n}_-$, we have
\begin{equation}
m(\langle s^{n}(D)\rangle)-2n \geq m(\langle s^{n}_- (D)\rangle),
\end{equation}
with equality if $s^{n}$ is obtained from $s^{n}_-$ by changing the label of one crossing from $n$ to $n-1$.
\end{proposition}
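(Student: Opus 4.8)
The plan is to run the state-sum expansion furnished by identity (\ref{greatness}) and to estimate the minimum degree of each summand separately, in direct analogy with Thistlethwaite's argument underlying Proposition \ref{mainprop1} for $n=1$. Expanding every crossing of $S_n(D)$ by (\ref{greatness}) gives
\begin{equation*}
\langle S_n(D)\rangle=\sum_{s^n}\langle s^n(D)\rangle=\sum_{s^n}\Big(\prod_{j}C_{n,i_j}\Big)\,G_{s^n},
\end{equation*}
where $G_{s^n}$ denotes the evaluation in $\mathcal{S}(S^2)$ of the trivalent graph obtained from $D$ by replacing each crossing by the corresponding $T_{n,i_j}$. Since $m$ is additive under multiplication, for each state I would split $m(\langle s^n(D)\rangle)$ into a \emph{coefficient part} $m(\prod_j C_{n,i_j})$ and a \emph{graph part} $m(G_{s^n})$, and then show that the all-minus state $s^n_-$ (every crossing labelled $n$) strictly minimizes the total, with every other state lying at least $2+2n$ above it.

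For the coefficient part, the key observation is that the $q$-binomial $\frac{(A^4;A^4)_n}{(A^4;A^4)_i(A^4;A^4)_{n-i}}$ is a genuine polynomial in $A^4$ with constant term $1$, hence of minimum degree $0$. Therefore $m(C_{n,i})=n^2+2i^2-4in=-n^2+2(n-i)^2$, a quantity minimized over $i\in\{0,\dots,n\}$ precisely at $i=n$, where it equals $-n^2$ and $C_{n,n}=A^{-n^2}$ exactly. Writing $i_\ell=n-j_\ell$ at each crossing, the coefficient part of a state equals $-cn^2+2\sum_\ell j_\ell^2$, so it exceeds the all-minus value $-cn^2$ by $2\sum_\ell j_\ell^2$, which is at least $2$ and equals $2$ only when a single crossing is relabelled from $n$ to $n-1$.

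Next I would identify the graph part of the all-minus state. Replacing every crossing by $T_{n,n}$ realizes, at the level of the underlying diagram, the all-negative smoothing $s_-(D)$, with the turnbacks cabling each resulting circle by the $n$-th Jones--Wenzl idempotent; hence $G_{s^n_-}$ is a disjoint union of $|s_-(D)|$ many $n$-colored unknots and evaluates to $\Delta_n^{|s_-(D)|}$. Since the lowest term of $\Delta_n$ is $(-1)^nA^{-2n}$, this gives $m(G_{s^n_-})=-2n|s_-(D)|$ with a single nonzero leading term, and combining with the coefficient part yields $m(\langle s^n_-(D)\rangle)=-cn^2-2n|s_-(D)|$, as claimed. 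The equality $m(\langle S_n(D)\rangle)=m(\langle s^n_-(D)\rangle)$ then follows once every other state is shown to have strictly larger minimum degree, since the unique minimizing leading monomial cannot be cancelled.

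The heart of the matter, and the step I expect to be the main obstacle, is to bound the graph part $m(G_{s^n})$ from below for an arbitrary state and to combine it with the coefficient estimate so that every state other than $s^n_-$ exceeds $-cn^2-2n|s_-(D)|$ by at least $2+2n$, with equality only for a single relabelling $n\mapsto n-1$. Here minus-adequacy of the reduced alternating diagram $D$ is essential: because $\mathbb{G}_-(D)$ carries no loops, the two arcs meeting at any crossing lie on \emph{distinct} circles of $s_-(D)$, so moving that crossing's label below $n$ opens a through-channel that \emph{merges} two different circles rather than pinching one, keeping the minimum-degree estimates additive and free of leading-term cancellation. A single relabelling $n\mapsto n-1$ merges two $n$-colored circles through a $T_{n,n-1}$-bridge, and a direct recoupling computation using (\ref{mn3}) and the formula for $\Delta_n$ should show the graph minimum degree rises by exactly $2n$; adding the coefficient increment $2$ gives the sharp value $2+2n$. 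The remaining, and most delicate, point is to establish that any deeper relabelling, or relabellings at several crossings, makes the sum of the coefficient increment $2\sum_\ell j_\ell^2$ and the graph increment strictly exceed $2+2n$; this is exactly where the trivalent-graph minimum-degree estimate and the cancellation-free consequence of the no-loop hypothesis must be pushed through in full generality.
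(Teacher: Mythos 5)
You should first note that the paper does not actually prove Proposition \ref{general_min}: it is quoted from \cite{Hajij3}, and the only guidance the paper gives about its proof is that it ``utilizes finding the minimum degree of certain skein elements called adequate skein elements,'' i.e.\ Lemma \ref{ArmondL}. Measured against that intended route, your decomposition of each state into a coefficient part and a graph part is the right opening move, and your coefficient computation is correct: $m(C_{n,i})=n^2+2i^2-4in=-n^2+2(n-i)^2$, minimized exactly at $i=n$, and the identification $G_{s^n_-}=\Delta_n^{|s_-(D)|}$ with $m=-2n|s_-(D)|$ is also right. But the proof stops precisely where the work is. The lower bound on $m(G_{s^n})$ for an arbitrary state is not an afterthought you can defer: it is needed already for the \emph{first} equality $m(\langle S_n(D)\rangle)=m(\langle s^n_-(D)\rangle)$, since without it you cannot rule out another state contributing at, or below, degree $-cn^2-2n|s_-(D)|$ and cancelling the leading term. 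The tool the paper points to, Lemma \ref{ArmondL}, gives $m(G_{s^n})\geq M(G_{s^n})=-2\,|\overline{G_{s^n}}|$, and what you must then supply is a combinatorial bound on how many circles $\overline{G_{s^n}}$ can have: each of the $\sum_\ell j_\ell$ strand-switches changes the circle count by $\pm 1$, and minus-adequacy forces the first switch at every crossing to be a merge, so $|\overline{G_{s^n}}|\leq n|s_-(D)|+\sum_\ell(j_\ell-1)$; combined with the coefficient increment $2\sum_\ell j_\ell^2$ and the inequality $j^2\geq j>j-1$ this closes the argument. None of this appears in your write-up.

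There is also a concrete error in the one quantitative claim you do make. You assert that a single relabelling $n\mapsto n-1$ raises the graph minimum degree by exactly $2n$ ``by a direct recoupling computation.'' It does not: by identity (\ref{greatness}) the label-$(n-1)$ smoothing sends exactly one strand of each $n$-cable across the crossing, so $\overline{G_{s^n}}$ is obtained from $\overline{G_{s^n_-}}$ by merging two circles into one; the resulting element is still adequate (this is where the no-loop hypothesis enters, and it is the argument you correctly gesture at), so by Lemma \ref{ArmondL} its minimum degree rises by exactly $2$, not $2n$. For the negative Hopf link with $n=2$ the state with labels $(1,2)$ has coefficient part $C_{2,1}C_{2,2}=A^{-6}(1+A^4)$ and an adequate graph with $3$ circles, giving $m=-12=m(\langle s^2_-(D)\rangle)+4$, not $+6$. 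So the mechanism you propose for producing the ``$2+2n$'' cannot work as described, and you should be aware that the sharp increment produced by the coefficient-plus-graph bookkeeping is $4$; reconciling this with the $2+2n$ appearing in the statement (which is transcribed from \cite{Hajij3}) is something your proof would have to confront explicitly rather than assume.
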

The proof of the previous fact utilizes finding the minimum degree of certain skein elements called \textit{adequate skein elements} due to Armond \cite{Armond}. We introduce this concept here since it will be needed later. Let $S$ be a crossingless skein element in $\mathcal{S}(S^2)$ consisting with circles and arcs connecting Jones-Wenzl projectors with various colors. Denote by $\bar{S}$ to the skein element obtained from $S$ by replacing every $i^{th}$ Jones-Wenzl idempotent with the identity element in $TL_{i}$. The skein element $S$ is called \textit{adequate} if $\bar{S}$ consists of circles each of which passes at most once through the regions where we had the boxes of the idempotents in $S$. Denote $M(S) := m(\bar{S})$. Computing the minimum degree of adequate skein elements can be done easily using the following Lemma:
\begin{lemma}
\cite{Armond}
\label{ArmondL}
Let $S$ be an in element in $\mathcal{S}(S^2)$ expressed as a single diagram containing the Jones-Wenzl idempotent, then $m(S) \geq M(S)$. Furthermore, if $S$ is adequate then $m(S)= M(S)$.
\end{lemma}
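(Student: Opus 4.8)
The plan is to expand every Jones--Wenzl projector occurring in $S$ in the Temperley--Lieb diagram basis and to play off two competing quantities: the $A$-degree gained from the expansion coefficients against the change in the number of resulting circles. Writing $f^{(n)}=\mathbf 1_n+\sum_{D\neq \mathbf 1_n}a_D\,D$, where $D$ ranges over the non-identity crossingless diagrams of $TL_n$ (each carrying at least one cap--cup turnback), the distributive expansion of all projectors in $S$ yields $\langle S\rangle=\sum_T c_T\,\langle T\rangle$. Here each $T$ is an honest crossingless diagram on $S^2$ (no boxes), $c_T$ is the product of the chosen coefficients $a_{D_p}$ over the projectors $p$, and $\langle T\rangle=\delta^{|T|}$ with $\delta=-A^2-A^{-2}$, so that $m(\langle T\rangle)=-2|T|$. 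The distinguished term in which every projector contributes its identity part is exactly $\bar S$, with $c_{\bar S}=1$; hence $m(\langle \bar S\rangle)=-2|\bar S|=M(S)$, and the whole problem reduces to showing that no other term drops below this degree.

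The two inputs I would isolate are: (i) a coefficient bound $m(a_D)\ge 2\,d(D)$, where $d(D)$ is the number of cap--cup pairs in $D$, so that $m(c_T)\ge 2t$ with $t=\sum_p d(D_p)$; and (ii) a circle-counting bound stating that toggling a single cap--cup in a planar diagram changes the number of circles by exactly $\pm 1$, whence $|T|\le |\bar S|+t$. Input (i) I would prove by induction on $n$ from Wenzl's recursion (\ref{recursive}): the single turnback term there carries the coefficient $-\Delta_{n-2}/\Delta_{n-1}$, whose minimal degree is $2$ (since $m(\Delta_m)=-2m$ gives $m(\Delta_{n-2}/\Delta_{n-1})=2$), so that each new turnback is paid for by precisely $+2$ in degree. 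Combining (i) and (ii) gives, for every term, $m(c_T\langle T\rangle)=m(c_T)-2|T|\ge 2t-2(|\bar S|+t)=-2|\bar S|=M(S)$, and therefore $m(S)\ge M(S)$.

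For the equality statement I would use adequacy to upgrade (ii) to a strict bound for every term with $t\ge 1$. Realize the passage from $\bar S$ to $T$ by $t$ elementary planar surgeries, one per cap--cup pair, each altering the circle count by $\pm 1$, and choose the first surgery to reconnect two adjacent through-strands of a resolved box. Since $S$ is adequate, each circle of $\bar S$ meets that box at most once, so those two strands lie on \emph{distinct} circles; the surgery therefore \emph{merges} them and decreases the count by one. The remaining $t-1$ surgeries change the count by at most $+1$ each, so $|T|\le |\bar S|+t-2$ and $m(c_T\langle T\rangle)\ge 2t-2(|\bar S|+t-2)=M(S)+4>M(S)$. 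Thus every non-identity term lies strictly above degree $M(S)$, while $\langle \bar S\rangle=\delta^{|\bar S|}$ supplies the monomial $(-1)^{|\bar S|}A^{-2|\bar S|}$ at degree $M(S)$ with a coefficient that cannot be cancelled. Hence $m(S)=M(S)$.

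The main obstacle is input (i): establishing the \emph{sharp} bound $m(a_D)\ge 2\,d(D)$ rather than merely $m(a_D)>0$, since only the exact constant $2$ per turnback balances the $\pm 1$ circle changes of (ii). Carrying the induction through Wenzl's recursion requires checking that composing the two copies of $f^{(n-1)}$ around $e_{n-1}$ in the turnback term does not produce stray closed loops that would lower the degree; the normalization $\Delta_{n-2}/\Delta_{n-1}$ together with the vanishing relations (\ref{AX}) are exactly what make this bookkeeping close, and verifying this is the technical heart of the argument.
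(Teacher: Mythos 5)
The paper does not actually prove this lemma: it is imported verbatim from \cite{Armond}, whose argument runs by induction on the Wenzl recursion (\ref{recursive}), peeling off one generator $e_i$ at a time so that each $\pm 1$ change in circle count is paid for by one factor of $\Delta_{n-2}/\Delta_{n-1}$. Your one-shot expansion is a legitimate reorganization of that idea, but the bookkeeping invariant you chose breaks the estimate. The problem is input (ii): replacing $\mathbf{1}_n$ by a diagram $D$ with $d(D)$ cap--cup pairs is \emph{not} realized by $d(D)$ elementary surgeries, and the circle count can increase by strictly more than $d(D)$. Concretely, take $D=e_1e_2e_3\in TL_4$: one cup $\{1,2\}$ on top, one cap $\{3',4'\}$ on the bottom, and two diagonal through-strands $3\to 1'$ and $4\to 2'$, so $d(D)=1$. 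Close the box with the outside matching $R=\{1\text{--}2,\,3\text{--}1',\,4\text{--}2',\,3'\text{--}4'\}$, which is non-crossing in the outer disk. Then $|\mathbf{1}_4\cup R|=1$ while $|D\cup R|=4$: the circle count jumps by $3$, not by at most $1$. Feeding this into your accounting with $m(a_D)\ge 2d(D)=2$ gives only $m(a_D\langle D\cup R\rangle)\ge 2-8=-6$, which lies strictly below $M(S)=-2|\bar S|=-2$, so your chain of inequalities does not establish $m(S)\ge M(S)$. (The lemma survives only because the actual coefficient of $e_1e_2e_3$ in $f^{(4)}$ has minimal degree $6$, not $2$.) The same defect undermines the adequacy half, since it reuses the bound $|T|\le|\bar S|+t-2$ with the same $t$.

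The repair is to replace the turnback count $d(D)$ throughout by the minimal word length $\ell(D)$ of $D$ in the generators $e_i$ (for $e_1e_2e_3$ this is $3$, not $1$). Then both halves of the balance become correct: the passage from $\mathbf{1}_n$ to $D$ is realized by $\ell(D)$ surgeries, one per letter of a reduced word, so $|T|\le|\bar S|+\sum_p\ell(D_p)$; and the known coefficient estimates for the Jones--Wenzl projector (or a more careful pass through Wenzl's recursion, in which each inserted $e_i$ is accompanied by its own factor of minimal degree $2$) give $m(a_D)\ge 2\ell(D)$. Note that your own sketch of input (i) — ``each new turnback is paid for by $+2$'' — if carried out honestly through the recursion, counts recursion steps, i.e.\ essentially $\ell(D)$; the error is in then pairing that degree bound with the coarser invariant $d(D)$ in the circle count. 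With $\ell$ in place of $d$ everywhere, your outline (including the adequacy argument that the first surgery inside a box must merge two distinct circles of $\bar S$) goes through and coincides in substance with Armond's proof.
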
 
\subsection{Twist Regions and Lowest Terms of the Colored Jones Polynomial}
We want to study the list of lowest terms of the colored Jones polynomial of a reduced alternating link diagram $L=L(k_1,\ldots ,k_r )$ as we increase the number of crossings in the labeled maximal twist regions. For this purpose we need to study the minimum degree of a certain trivalent graph obtained from the link diagram $L(k_1,\ldots ,k_r )$. Let $\Upsilon(n,p)$ be the skein element in $\mathcal{S}(S^2)$ obtained from $L(k_1,\ldots,k_r)$ by cabling all the strands outside of the first maximal twist region by the $n^{th}$ Jones-Wenzl projector and replacing this twist region by the trivalent graph $T_{n,p}$, where $0\leq p < n$, as illustrated in Figure \ref{replace}.

\begin{figure}[H]
	\centering
	{\includegraphics[scale=0.2]{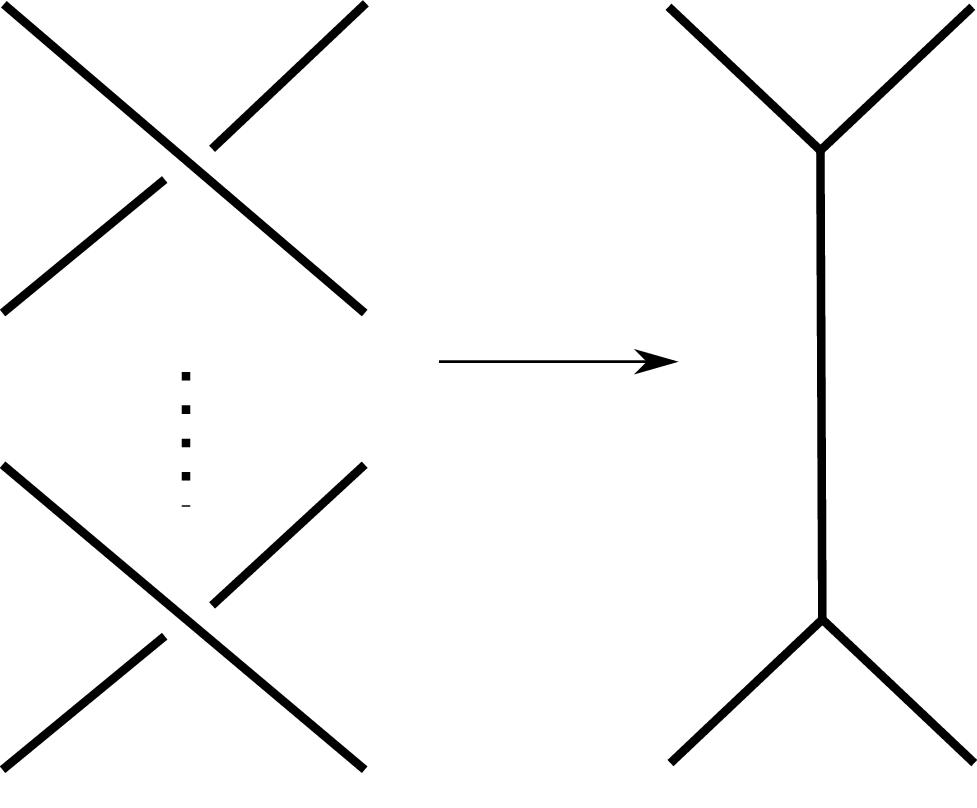}
		\put(-35,+69){\footnotesize{$n$}}
		\put(0,+69){\footnotesize{$n$}}
		\put(-35,8){\footnotesize{$n$}}
		\put(0,8){\footnotesize{$n$}}
		\put(-98,+37){\footnotesize{$k_1$}}
		\put(-10,+35){\footnotesize{$2p$}}
		\caption{Replacing a maximal twist region with $k_1$ crossings in the alternating diagram $D$ by the trivalent graph $T_{n,p}$. }
		\label{replace}}
\end{figure}See also Figure \ref{example} for an example.
\begin{figure}[H]
	\centering
	{\includegraphics[scale=0.1]{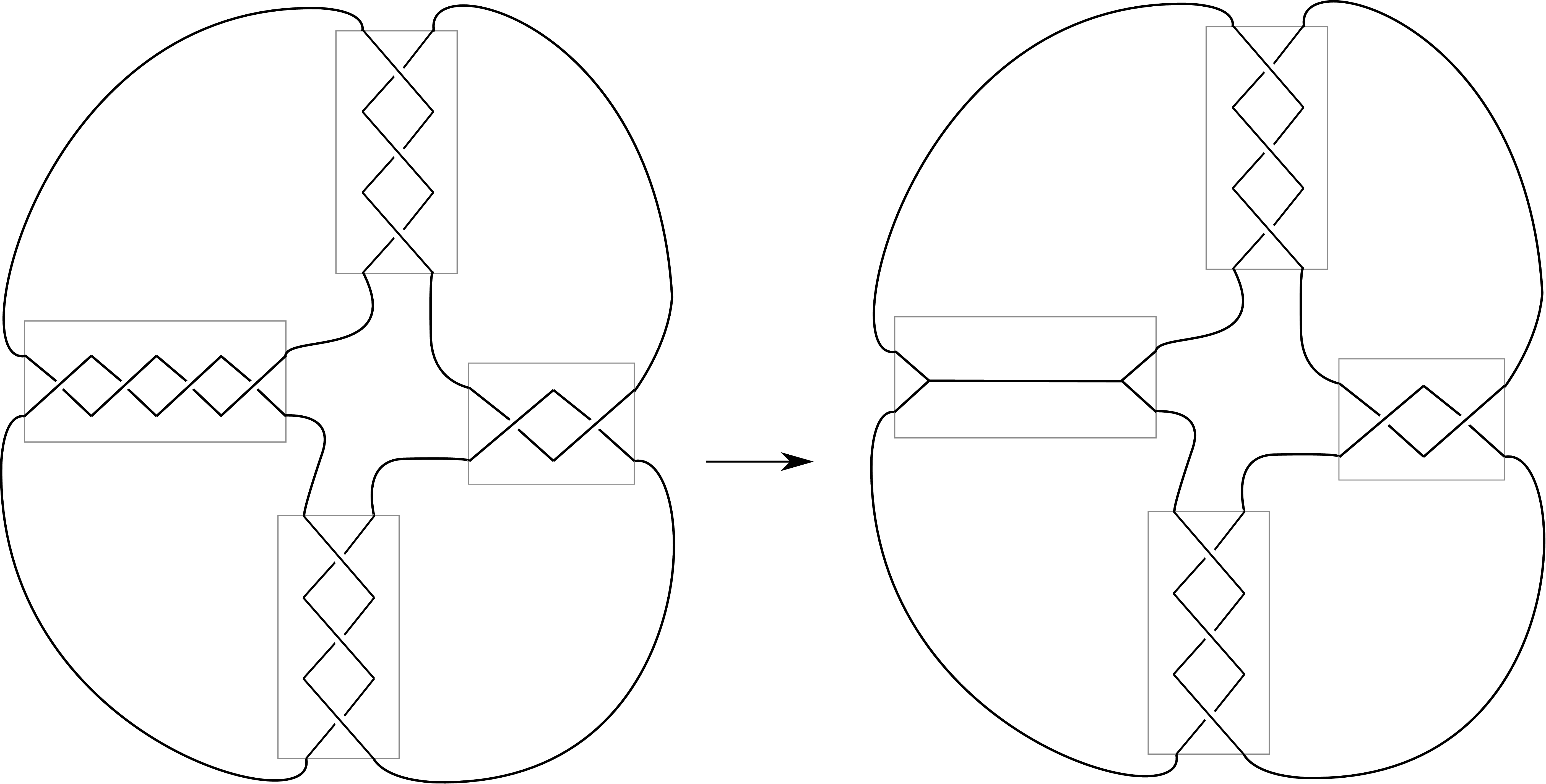}
		\put(-235,+83){\footnotesize{$1$}}
		\put(-185,+100){\footnotesize{$2$}}
		\put(-165,75){\footnotesize{$3$}}
		\put(-198,8){\footnotesize{$4$}}
		\put(-100,74){\footnotesize{$2p$}}
		\put(-100,130){\footnotesize{$n$}}
		\put(-100,15){\footnotesize{$n$}}
		\put(0,15){\footnotesize{$n$}}
		\put(-8,115){\footnotesize{$n$}}
		\caption{On the left $L=L(4,3,2,3)$ where the labels $1,2,3$ and $4$ for the maximal negative twist regions are shown on the diagram. On the right the skein element $\Upsilon(n,p)$ is obtained by replacing the twist region labeled $1$ by the trivalent $T_{n,p}$, where $0\leq p < n$.}
		\label{example}}
\end{figure}
Applying the fusion formula to the twist regions $2,\ldots,r$ in the skein element $\Upsilon(n,p)$ we obtain,
\begin{equation}
\label{newc}
\Upsilon(n,p)=\sum_{0 \leq j_2, \ldots, j_r \leq n}\prod_{i=2}^{r} (\mu_{2j_i}^{n,n})^{k_i}\frac{\Delta_{2j_i}}{\theta(n,n,2j_i)}\Gamma_{n,p,(j_2,\ldots,j_r)},
\end{equation}
where $\Gamma_{n,p,(j_2,\ldots,j_r)}$ is the trivalent graph obtained from $\Upsilon(n,p)$ by replacing $i$-th twist region by an edge colored $2 j_i$, where $0 \leq j_i \leq n$, and coloring all of the other edges by $n$. See Figure \ref{example2} for an example.
\begin{figure}[H]
	\centering
	{\includegraphics[scale=0.1]{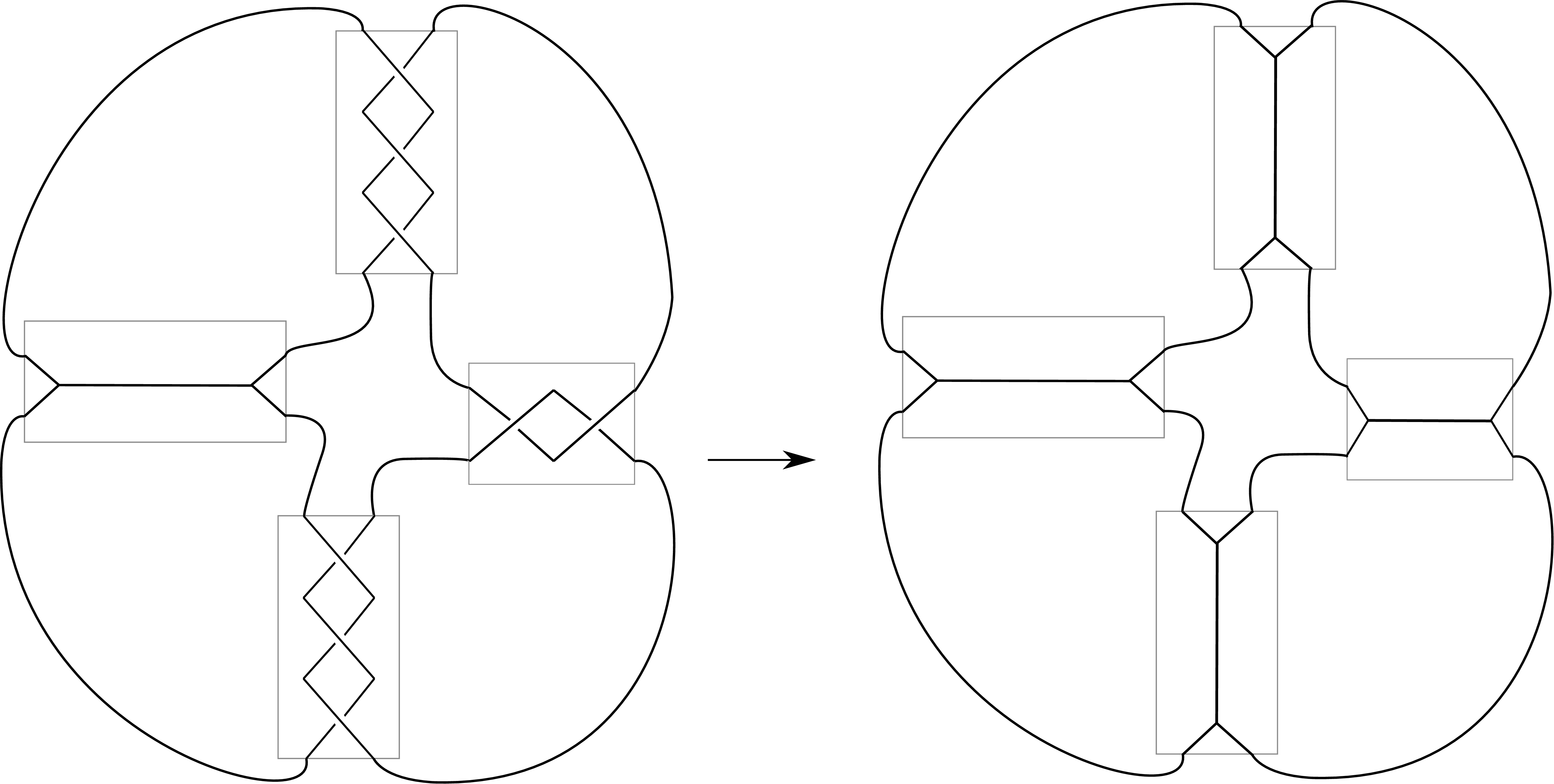}
		\put(-235,+83){\footnotesize{$1$}}
		\put(-185,+100){\footnotesize{$2$}}
		\put(-165,75){\footnotesize{$3$}}
		\put(-198,8){\footnotesize{$4$}}
		\put(-43,+110){\footnotesize{$2j_2$}}
		\put(-30,67){\footnotesize{$2j_3$}}
		\put(-72,20){\footnotesize{$2j_4$}}
		\put(-250,74){\footnotesize{$2p$}}
		\put(-100,74){\footnotesize{$2p$}}
		\put(-100,130){\footnotesize{$n$}}
		\put(-100,15){\footnotesize{$n$}}
		\put(0,15){\footnotesize{$n$}}
		\put(-8,115){\footnotesize{$n$}}
		\caption{On the left the skein element $\Upsilon(n,p)$ obtained from the link $L=L(4,3,2,3)$ shown in Figure \ref{example}. On right the trivalent graph $\Gamma_{n,p,(j_2,j_3,j_4)}$ obtained from $\Upsilon(n,p)$ by replacing the maximal twist regions labeled $2$, $3$ and $4$ by the labels $2j_2, 2j_3$ and $2j_4$ respectively.}
		\label{example2}}
\end{figure}
We want to find the minimum degree of the skein element $\Upsilon(n,p)$. For this purpose we recall the following fact from \cite{Armond}.
\begin{lemma}\cite{Armond}
\label{armond2}
\begin{enumerate}
\item Let $1 \leq j \leq n$, then
\begin{eqnarray*}
m(\mu_{2j}^{n,n})&=&m(\mu_{2(j-1)}^{n,n})-4j.
\end{eqnarray*}
\item 

Let $ 1 \leq j \leq n$, then
\begin{eqnarray*}
m(\frac{\Delta_{2j}}{\theta(n,n,2j)})=m(\frac{\Delta_{2(j-1)}}{\theta(n,n,2(j-1))})-2
\end{eqnarray*}
\item 

Let $1 \leq j_i,p \leq n$ for $2\leq i \leq r$. Then
\begin{eqnarray}
m(\Gamma_{n,p,(j_2,\ldots, j_{i-1},j_i,i_{j+1},\ldots,j_r)})=m(\Gamma_{n,p,(j_2,\ldots, j_{i-1},j_i-1,i_{j+1},\ldots,j_r)}) \pm 2.
\end{eqnarray}

\end{enumerate}
\end{lemma}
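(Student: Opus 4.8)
The plan is to handle the three parts separately: parts (1) and (2) are pure bookkeeping with the explicit formulas already recorded in the paper, whereas part (3) is the genuine combinatorial statement, which I would settle through Armond's adequacy criterion, Lemma \ref{ArmondL}.

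For part (1) I would use that $\mu_{2j}^{n,n}$ is, up to sign, a single power of $A$. Substituting $a=b=n$ and $c=2j$ into $\mu_c^{a,b}=(-1)^{\frac{a+b-c}{2}}A^{a+b-c+\frac{a^2+b^2-c^2}{2}}$ gives $\mu_{2j}^{n,n}=(-1)^{n-j}A^{\,n^2+2n-2j^2-2j}$, so $m(\mu_{2j}^{n,n})=n^2+2n-2j^2-2j$. Running the same substitution with $j$ replaced by $j-1$ and subtracting the exponents is a one-line computation yielding the stated difference $-4j$.

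For part (2) I would first record the two minimum degrees separately. Writing $\Delta_n=(-1)^n\sum_{k=0}^{n}A^{2n-4k}$ shows $m(\Delta_{2j})=-4j$. For the theta symbol I would exploit that every $q$-Pochhammer block $(A^4;A^4)_m$ has constant term $1$, hence minimum degree $0$ with an invertible lowest coefficient; therefore the whole Pochhammer quotient in \eqref{mn3} contributes minimum degree $0$, and the only genuine contribution comes from the prefactor $A^{-2(x+y+z)}$. Since the internal colors of $\tau_{n,n,2j}$ are $x=n-j$ and $y=z=j$, we obtain $m(\theta(n,n,2j))=-2(x+y+z)=-2(n+j)$. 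Because the lowest-degree coefficients of numerator and denominator are nonzero, the minimum degree of the quotient is the difference, $m\big(\tfrac{\Delta_{2j}}{\theta(n,n,2j)}\big)=-4j+2(n+j)=2n-2j$, and subtracting the value at $j-1$ gives exactly $-2$.

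The substance lies in part (3), where I would invoke Lemma \ref{ArmondL}. After expanding every Jones--Wenzl idempotent and every trivalent vertex of $\Gamma_{n,p,(j_2,\ldots,j_r)}$ into the Temperley--Lieb algebra and passing to $\overline{\Gamma}$ (each idempotent replaced by the identity), one gets a crossingless diagram that is a disjoint union of circles; granting adequacy, $m(\Gamma_{n,p,(j_2,\ldots,j_r)})=M(\Gamma_{n,p,(j_2,\ldots,j_r)})=-2\,|\overline{\Gamma}|$. The dependence on $j_i$ is entirely local: lowering $j_i$ to $j_i-1$ changes the color of the single edge at the $i$-th region from $2j_i$ to $2j_i-2$ and shifts the internal colors of the two adjacent vertices $\tau_{n,n,2j_i}$ from $(n-j_i,j_i,j_i)$ to $(n-j_i+1,j_i-1,j_i-1)$. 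Tracing this inside a disk containing the edge and its two endpoints, with everything outside held fixed, one sees that all but two of the middle strands and all of the turn-back strands are spectators, so the modification is a single reconnection of one pair of strands; this merges or splits exactly one circle, so $|\overline{\Gamma}|$ changes by exactly one and $m$ by $\pm2$, the sign recording whether the global closure of the remaining graph creates or destroys a loop. The main obstacle is precisely the verification that $\Gamma_{n,p,(j_2,\ldots,j_r)}$ is adequate for the relevant range of colorings, so that Lemma \ref{ArmondL} yields equality rather than the automatic inequality $m(\Gamma)\ge M(\Gamma)$; once adequacy is in hand, the local count of the unit change in $|\overline{\Gamma}|$ is the only remaining bookkeeping.
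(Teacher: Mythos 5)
The paper offers no proof of this lemma at all: it is imported verbatim from \cite{Armond}, so there is no internal argument to compare yours against. Judged on its own terms, your parts (1) and (2) are correct and complete. The exponent arithmetic giving $\mu_{2j}^{n,n}=(-1)^{n-j}A^{n^2+2n-2j-2j^2}$, the computation $m(\Delta_{2j})=-4j$, and the observation that every $q$-Pochhammer factor in (\ref{mn3}) has minimum degree $0$ with lowest coefficient $1$, so that $m(\theta(n,n,2j))=-2(x+y+z)=-2(n+j)$, yield exactly the stated differences $-4j$ and $-2$. These two parts need nothing more.

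Part (3) contains a genuine gap, and it is the one you flag yourself without closing: your argument requires $\Gamma_{n,p,(j_2,\ldots,j_r)}$ to be an \emph{adequate} skein element for every coloring $(j_2,\ldots,j_r)$, so that Lemma \ref{ArmondL} upgrades the automatic inequality $m\geq M$ to the equality $m=M=-2\,|\overline{\Gamma}|$ and the claim reduces to a circle count. That adequacy is neither automatic nor established: the paper's own Lemma \ref{technical} proves adequacy only for the extreme coloring $(n,\ldots,n)$ (together with the one region colored $p$), precisely because $\overline{\Gamma_{n,n,(n,\ldots,n)}}$ is the all-negative state of the cable of an alternating diagram. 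For intermediate colorings $j_i<n$, the turnback strands created at the two vertices of the $i$-th bubble can run through an adjacent projector and return, producing a circle that passes twice through an idempotent region, in which case only $m\geq M$ survives and your surgery count controls $M$ rather than $m$. (The local step itself --- that a single planar disk surgery exchanging the two through-strands for two turnbacks changes the circle number by exactly one, hence $M$ by $\pm 2$ --- is fine, but it is the wrong quantity once adequacy fails.) To repair this you would either have to prove adequacy for all the colorings that actually occur, or replace the equality by the one-sided degree estimate that the downstream application in Lemma \ref{technical} actually uses, obtained from properties of the Jones--Wenzl projector rather than from a circle count.
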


The following Lemma studies the minimum degree of the trivalent graph $\Upsilon(n,p)$. 
\begin{lemma}\label{lemma4.4}
\label{technical}
Let $L=L(k_1,\ldots,k_r)$ be a reduced alternating link diagram with $c$ crossings. Let $\Upsilon(n,p)$ be the skein element in $\mathcal{S}(S^2)$ obtained from $L(k_1,\ldots,k_r)$ as illustrated above. Then $m(\Upsilon(n,p))=-n^2(c-k_1)-2 (s_-(L)n-(n-p))$.
\end{lemma}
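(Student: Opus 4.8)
The plan is to read off $m(\Upsilon(n,p))$ from the fusion expansion (\ref{newc}) by isolating the single summand of smallest degree. Because each $\mu_{2j}^{n,n}$ is a monomial and the Pochhammer factors of $\Delta_{2j}/\theta(n,n,2j)$ have degree $0$, the bottom coefficients of a summand simply multiply, so
\[
m\!\left(\prod_{i=2}^{r}(\mu_{2j_i}^{n,n})^{k_i}\tfrac{\Delta_{2j_i}}{\theta(n,n,2j_i)}\,\Gamma_{n,p,(j_2,\dots,j_r)}\right)=\sum_{i=2}^{r}\Big[k_i\,m(\mu_{2j_i}^{n,n})+m\big(\tfrac{\Delta_{2j_i}}{\theta(n,n,2j_i)}\big)\Big]+m\big(\Gamma_{n,p,(j_2,\dots,j_r)}\big).
\]
First I would show this degree is strictly decreasing in each $j_i$: by Lemma \ref{armond2}(1),(2), replacing $j_i$ by $j_i+1$ lowers the bracketed term by exactly $4k_i(j_i+1)+2$, whereas by Lemma \ref{armond2}(3) it alters $m(\Gamma)$ by only $\pm 2$, so the net change is at most $-4k_i(j_i+1)<0$. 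Hence over the cube $\{0\le j_i\le n\}_{i=2}^{r}$ the degree is minimized \emph{uniquely} at the corner $j_2=\dots=j_r=n$; since that minimum is attained by a single summand, its bottom coefficient cannot cancel, and therefore $m(\Upsilon(n,p))$ equals the degree of the corner summand.

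Evaluating the corner is routine. From $m(\mu_0^{n,n})=n^2+2n$ and Lemma \ref{armond2}(1) one gets $m(\mu_{2n}^{n,n})=-n^2$, while $m(\Delta_{2j}/\theta(n,n,2j))=2n-2j$ (value $2n$ at $j=0$, dropping by $2$ each step by Lemma \ref{armond2}(2)) vanishes at $j=n$. Summing over the fused regions, and using that the labelled regions exhaust the crossings of $L$ so that $\sum_{i=2}^{r}k_i=c-k_1$, the coefficient part contributes $-n^2(c-k_1)$, which is exactly the first term of the asserted formula.

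It remains to prove
\[
m\big(\Gamma_{n,p,(n,\dots,n)}\big)=-2\big(s_-(L)\,n-(n-p)\big).
\]
Since $\Gamma_{n,p,(n,\dots,n)}$ is a crossingless diagram of Jones--Wenzl projectors, I would invoke Armond's Lemma \ref{ArmondL}: once adequacy is checked, $m(\Gamma_{n,p,(n,\dots,n)})=M(\Gamma_{n,p,(n,\dots,n)})=m(\overline{\Gamma}_{n,p,(n,\dots,n)})$, and the all-identity resolution $\overline{\Gamma}$ is a disjoint union of loops whose bracket has degree $-2$ times the number of loops. At each region $i\ge 2$ the channel $j_i=n$ has internal color $x_i=0$, so both $n$-cables pass straight through; this is precisely the untwisting that computes the all-negative state, so these regions reproduce the $n$-cabled state $s_-(L)$ and contribute $s_-(L)\,n$ loops (the colored count behind Proposition \ref{general_min}). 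At the first region the bubble $T_{n,p}$ has internal color $x_1=n-p$ at each of its two trivalent vertices, i.e.\ $n-p$ of the $n$ cables turn back instead of passing through; each such turnback merges two loops into one, lowering the count by exactly $n-p$. Thus $\overline{\Gamma}$ has $s_-(L)\,n-(n-p)$ loops, giving the displayed degree, and combining with the coefficient part proves the lemma.

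The main obstacle is this last paragraph: one must verify that $\Gamma_{n,p,(n,\dots,n)}$ is adequate and that the $n-p$ turnbacks coming from the $T_{n,p}$ bubble each decrease the loop count by one rather than spawning a trivial circle. Controlling this local surgery around the bubble is where the adequacy of the reduced alternating diagram $L$ (the absence of loops in $\mathbb{G}_-(L)$) must be used, exactly as in the uncolored count of Proposition \ref{general_min}. As a sanity check, when $r=1$ the graph $\Gamma_{n,p}$ is the theta graph $\theta(n,n,2p)$, whose degree $-2(n+p)$ indeed equals $-2\big(s_-(L)\,n-(n-p)\big)$ with $s_-(L)=2$.
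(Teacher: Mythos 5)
Your proposal is correct and follows essentially the same route as the paper: expand $\Upsilon(n,p)$ via the fusion formula (\ref{newc}), identify the dominant summand at the corner $j_2=\dots=j_r=n$ (whose coefficient collapses to $A^{-n^2(c-k_1)}$), compute $m(\Gamma_{n,p,(n,\dots,n)})$ by checking adequacy and counting the $n s_-(L)-(n-p)$ circles lost to the $n-p$ turnbacks of $T_{n,p}$, and rule out cancellation via Lemma \ref{armond2}. Your explicit monotonicity estimate for the no-cancellation step is in fact spelled out more carefully than in the paper, while the adequacy/merging verification you flag as the remaining obstacle is exactly the point the paper settles with the local picture in Figure \ref{complicated}.
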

\begin{proof}
The trivalent graph $\Upsilon(n,p)$ can be written as in equation (\ref{newc}). We claim that the minimum degree of $\Upsilon_{n,p}$ is coming from the term
\begin{equation}
\label{term}
 \prod_{i=2}^{r} (\mu_{2n}^{n,n})^{k_i}\frac{\Delta_{2n}}{\theta(n,n,2n)}\Gamma_{n,p,(n,\ldots,n)}
\end{equation} and the minimal degree of this term does not get canceled by any other terms from the summation (\ref{newc}).
Since $\mu_{2n}^{n,n}=A^{-n^2}$ and $\Delta_{2n}=\theta(n,n,2n)$, we can write \ref{term} as
 $$\prod_{i=2}^{r} (\mu_{2n}^{n,n})^{k_i}\frac{\Delta_{2n}}{\theta(n,n,2n)}\Gamma_{n,p,(n,\ldots,n)}=(A)^{-n^2(c-k_1)}\Gamma_{n,p,(n,\ldots,n)}.$$
We want to find the minimum degree of $\Gamma_{n,p,(n,\ldots,n)}$. We do this by comparing this element to $\Gamma_{n,n,(n,\ldots,n)}$. The state $\overline{\Gamma_{n,n,(n,\ldots,n)}}$ is equivalent to the all-negative state smoothing of the link $L^{n}$. Since $L$ is alternating then $L^n$ is minus-adequate and hence $\Gamma_{n,n,(n,\ldots,n)}$ is an adequate skein element. The skein element $\overline{\Gamma_{n,p,(n,\ldots,n)}}$ is obtained from $\overline{\Gamma_{n,n,(n,\ldots,n)}}$ by merging exactly $n-p$ circles as illustrated in Figure \ref{complicated} $(a)$ and $(b)$. Hence the number of connected components of $\overline{\Gamma_{n,p,(n,\ldots,n)}}$ is $n-p$ fewer than that number of $\overline{\Gamma_{n,n,(n,\ldots,n)}}$. In other words, the number of connected components of the state $\overline{\Gamma_{n,p,(n,\ldots,n)}}$ is $ns_-(L)-(n-p)$. Figure \ref{complicated} (b) shows the  local difference between the skein elements $\Gamma_{n,p,(n,\ldots,n)}$ and $\Gamma_{n,n,(n,\ldots,n)}$. All circles in $ \overline{\Gamma_{n,p,(n,\ldots,n)}}$ in the region outside the illustrated region in Figure \ref{complicated} (b) pass at most once through the regions of the idempotents since the outside of this skein element is identical to the outside of the adequate skein element $\Gamma_{n,n,(n,\ldots,n)}$. On the other hand, each one of the $n-p$ circles inside the region illustrated in Figure \ref{complicated} (b) passes at most once through the region of the idempontents. The same holds for the circles labeled $p$. Hence all circles in $\overline{\Gamma_{n,p,(n,\ldots,n)}}$ pass at most once though the regions of the idemponents and hence $\Gamma_{n,p,(n,\ldots,n)}$ is an adequate skein element. 
\begin{figure}[H]
	\centering
	{\includegraphics[scale=0.35]{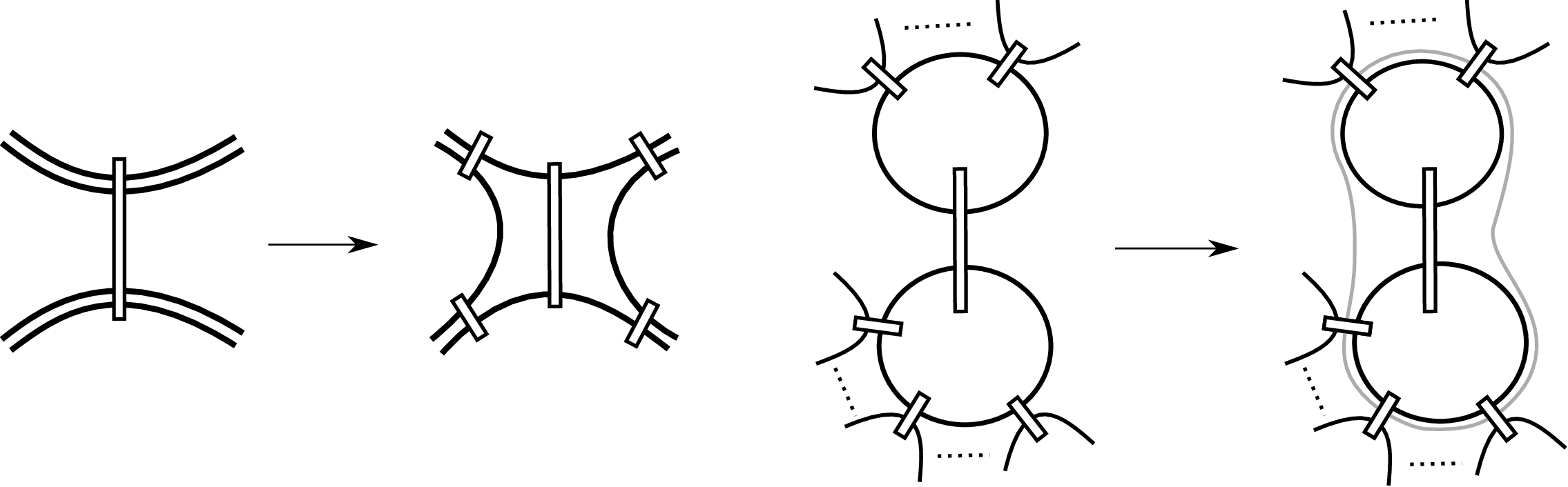}
		\put(-15,+63){\footnotesize{$n-p$}}
		\put(-42,+74){\footnotesize{$p$}}
		\put(-42,+44){\footnotesize{$p$}}
		\put(-157,+74){\footnotesize{$n$}}
		\put(-157,+44){\footnotesize{$n$}}
		\put(-205,+100){$(b)$}
		\put(-322,+82){\footnotesize{$n$}}
		\put(-322,+35){\footnotesize{$n$}}
		\put(-242,+82){\footnotesize{$p$}}
		\put(-242,+37){\footnotesize{$p$}}
		\put(-230,+63){\footnotesize{$n-p$}}
		\put(-284,+63){\footnotesize{$n-p$}}
		\put(-395,+100){$(a)$}
		\caption{(a) The local change that we need to do on the skein element $\Gamma_{n,n,(n,\ldots,n)}$ to obtain the skein element $\Gamma_{n,p,(n,\ldots,n)}$. (b) The skein element $\Gamma_{n,n,(n,\ldots,n)}$ is an adequate skein element and $\Gamma_{n,p,(n,\ldots,n)}$ is obtained from the skein element $\Gamma_{n,n,(n,\ldots,n)}$ by merging $n-p$ circles in $\overline{\Gamma_{n,n,(n,\ldots,n)}}$ to obtain the gray circle. }
		\label{complicated}}
\end{figure}
 Since $\Gamma_{n,p,(n,\ldots,n)}$ is an adequate skein element then by Lemma \ref{ArmondL} we have $m(\Gamma_{n,p,(n,\ldots,n)})=M(\Gamma_{n,p,(n,\ldots,n)})$. However,  $M(\Gamma_{n,p,(n,\ldots,n)})$ is equal to $-2(ns_-(L)-(n-p))$. Thus, $$m((A)^{-n^2(c-k_1)}\Gamma_{n,p,(n,\ldots,n)})= -n^2(c-k_1)-2 (s_-(L)n-(n-p)).$$ It is left to show that none of the terms in the summation (\ref{newc}) cancels the minimum term in $(A)^{-n^2(c-k_1)}\Gamma_{n,p,(n,\ldots,n)}$. This follows immediately from Lemma (\ref{armond2}). The result follows.
\end{proof}
 \begin{remark}
In Lemma \ref{lemma4.4}, we assumed that $r$ is the number of total maximal negative twist regions in the diagram $L$. It should be noted however that the proofs given here work when we choose to label a subset of the maximal twist regions set of the diagram. The reason for choosing $r$ to be maximal is to make the notation of Lemma  less cumbersome.
\end{remark}
\begin{theorem}\label{thm4.6}
\label{queen theorem}
Let $L=L_k$ be a reduced alternating link diagram with a marked maximal negative twist region with $k \geq 1$ crossings. Then,
\begin{equation}
\tilde{J}_n(L_{k})\doteq_{4n(k-1)+4}\tilde{J}_n(L_{k-1}).
\end{equation} 
\end{theorem}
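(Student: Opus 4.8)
The plan is to rerun the degree-counting argument of Theorem~\ref{new greatness}, replacing the Kauffman skein relation by the fusion expansion of the marked twist region. First I would decompose that region. Writing $c$ for the number of crossings of $L_k$, applying the fusion formula (\ref{fusion}) to the two $n$-cabled strands running through the marked region, and then absorbing its $k$ crossings into the twist coefficient $\mu_{2p}^{n,n}$ by (\ref{untwist}), I obtain the single-region analogue of (\ref{newc}),
\begin{equation*}
\langle S_n(L_k)\rangle=\sum_{p=0}^{n}\frac{\Delta_{2p}}{\theta(n,n,2p)}\,(\mu_{2p}^{n,n})^{k}\,\langle\Upsilon(n,p)\rangle,
\end{equation*}
together with the identical formula for $L_{k-1}$ in which $k$ is replaced by $k-1$. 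The decisive structural point is that the skein elements $\langle\Upsilon(n,p)\rangle$ and the scalars $\Delta_{2p}/\theta(n,n,2p)$ are literally the \emph{same} in both expansions, since $\Upsilon(n,p)$ is formed by deleting the marked region and the two diagrams coincide everywhere else; only the power of $\mu_{2p}^{n,n}$ differs. Because $\tilde J_n(L_k)\doteq\langle S_n(L_k)\rangle$ and $\doteq_N$ is insensitive to the sign and monomial framing factors, it suffices to compare these two sums.

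Next I would record the minimum degree of the $p$-th summand,
\begin{equation*}
D_k(p):=m\!\left(\tfrac{\Delta_{2p}}{\theta(n,n,2p)}\right)+k\,m(\mu_{2p}^{n,n})+m(\Upsilon(n,p)),
\end{equation*}
and compute the gap between consecutive summands. Substituting the three recursions already available, namely $m(\mu_{2p}^{n,n})-m(\mu_{2(p-1)}^{n,n})=-4p$ and $m(\Delta_{2p}/\theta)-m(\Delta_{2(p-1)}/\theta)=-2$ from Lemma~\ref{armond2}, together with $m(\Upsilon(n,p))=-n^2(c-k)-2\bigl(ns_-(L)-(n-p)\bigr)$ from Lemma~\ref{lemma4.4}, gives $D_k(p-1)-D_k(p)=4+4kp>0$. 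Hence $D_k(p)$ is strictly decreasing in $p$, its minimum is attained only at $p=n$, and the next summand lies exactly $D_k(n-1)-D_k(n)=4kn+4$ higher. Strict monotonicity forces the $D_k(p)$ to be pairwise distinct, so there is no cancellation of lowest-degree terms among different $p$; consequently, below degree $D_k(n-1)$ the entire sum equals its $p=n$ term $A^{-n^2k}\langle\Upsilon(n,n)\rangle$ (using $\Delta_{2n}=\theta(n,n,2n)$ and $\mu_{2n}^{n,n}=A^{-n^2}$). The same computation for $L_{k-1}$ shows that below degree $D_{k-1}(n-1)$ its expansion equals $A^{-n^2(k-1)}\langle\Upsilon(n,n)\rangle$.

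Finally I would align the two series from the bottom, as required by $\doteq_N$. The two leading blocks $A^{-n^2k}\langle\Upsilon(n,n)\rangle$ and $A^{-n^2(k-1)}\langle\Upsilon(n,n)\rangle$ differ only by the monomial $A^{-n^2}$, so after bottom-alignment they present \emph{identical} coefficient sequences; thus $\langle S_n(L_k)\rangle$ and $\langle S_n(L_{k-1})\rangle$ agree for as long as both remain purely their $p=n$ terms. The binding constraint is the shorter block, whose length is $D_{k-1}(n-1)-D_{k-1}(n)=4(k-1)n+4=4n(k-1)+4$ (the block for $L_k$ has the larger length $4kn+4$). This yields $\langle S_n(L_k)\rangle\doteq_{4n(k-1)+4}\langle S_n(L_{k-1})\rangle$, and transferring through $\tilde J_n\doteq\langle S_n\rangle$ gives the theorem.

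I expect the genuine content to be entirely front-loaded into Lemma~\ref{lemma4.4}: its adequacy argument is what guarantees both that $m(\Upsilon(n,p))$ equals $M(\Upsilon(n,p))$ and that the dominant $p=n$ contribution is not annihilated by cancellation. Granting that lemma, the only delicate points are bookkeeping ones. The hard part will be handling the two \emph{different} bottom degrees of $\langle S_n(L_k)\rangle$ and $\langle S_n(L_{k-1})\rangle$, which differ by $n^2$ precisely because each crossing contributes the factor $\mu_{2n}^{n,n}=A^{-n^2}$ in the dominant channel, and then verifying that the subdominant channels ($p<n$) cannot interfere before degree $4n(k-1)+4$. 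Strict monotonicity of $D_k(p)$ is exactly the tool that makes both of these rigorous.
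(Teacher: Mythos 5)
Your argument is correct, and it reaches the stated bound by a route that differs from the paper's in one structural respect. The paper does not fuse the marked region of $L_k$ wholesale: it first applies the colored Kauffman skein relation (\ref{greatness}) to a \emph{single} crossing of that region, producing the exact recursion $\langle S_n(L_k)\rangle = R(n,k) + A^{-n^2}\langle S_n(L_{k-1})\rangle$, and then fuses the remainder $R(n,k)$ into the elements $\Upsilon(n,p)$ and shows via Lemma \ref{lemma4.4} and Lemma \ref{armond2} that $m(R(n,k))$ sits exactly $4n(k-1)+4$ above $m(A^{-n^2}\langle S_n(L_{k-1})\rangle)$. You instead expand both $\langle S_n(L_k)\rangle$ and $\langle S_n(L_{k-1})\rangle$ by fusion over the whole region and compare the two sums channel by channel, noting that the $p=n$ channels agree up to the monomial $A^{-n^2}$ while the $p<n$ channels are pushed up by the gap $D_{k-1}(n-1)-D_{k-1}(n)=4n(k-1)+4$. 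The two computations are powered by the same inputs (Lemma \ref{lemma4.4} plus the degree recursions), and the exponent $k-1$ on $\mu_{2p}^{n,n}$ that produces the binding constant appears in the paper because one crossing has been consumed by (\ref{greatness}), and in your version because $L_{k-1}$ carries the shorter pure block; your version has the merit of making transparent why $L_{k-1}$, not $L_k$, is the binding constraint, and of running parallel to the proof of Theorem \ref{new greatness}. Two bookkeeping points you should make explicit, neither of which is a gap: the paper defines $\Upsilon(n,p)$ and states Lemma \ref{lemma4.4} only for $0\le p<n$, whereas you also use the $p=n$ channel, so you should note that the all-$n$ graph is the adequate state already handled inside the proof of that lemma, giving $m(\Upsilon(n,n))=-n^2(c-k)-2ns_-(L)$; and your assertion that the $\Upsilon(n,p)$ are literally the same skein elements in both expansions rests on the fact that $L_k$ and $L_{k-1}$ coincide outside the marked region, so the exterior has $c-k=(c-1)-(k-1)$ crossings in both cases.
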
	
\begin{proof}
Since
\begin{equation}
\langle S_n(L_k) \rangle \doteq \tilde{J}_n(L_{k}),
\end{equation}
then we will do the computations on $\langle S_n(L_k) \rangle$.
Using equation \ref{greatness} on a single crossing of the marked twist region we obtain : 
 \begin{eqnarray*}
    \langle S_n(L_k) \rangle \doteq \quad  \begin{minipage}[h]{0.15\linewidth}
        \vspace{0pt}
        \scalebox{0.18}{\includegraphics{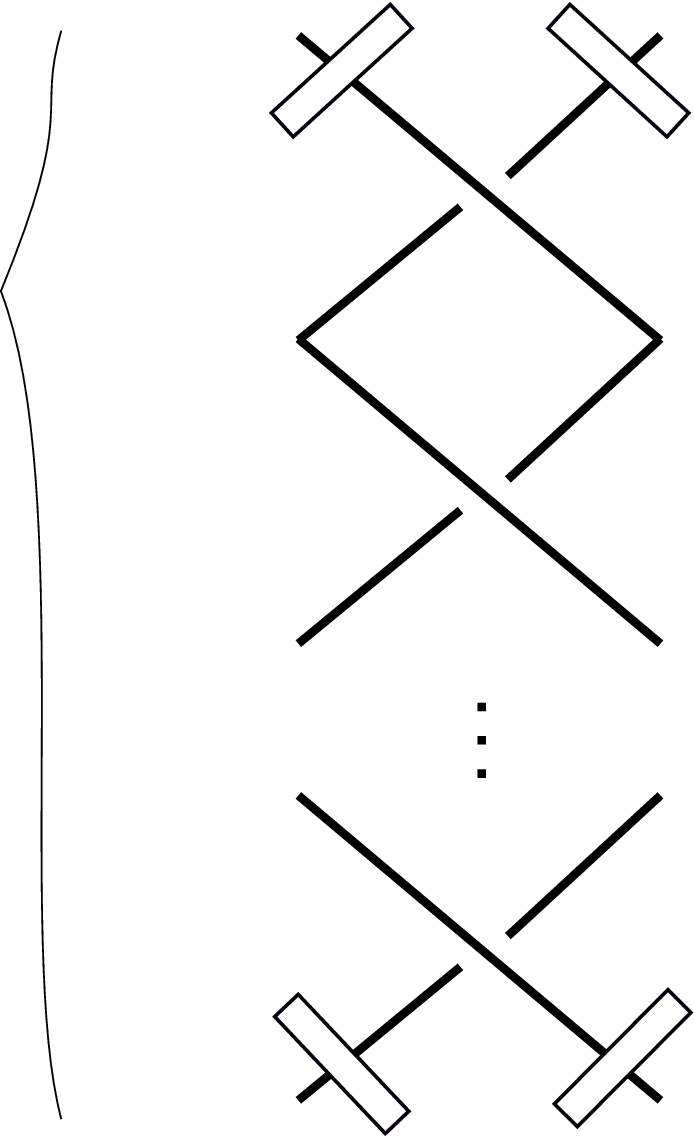}}
          \put(0,+100){\footnotesize{$n$}}
        \put(-44,+100){\footnotesize{$n$}}
         \put(0,+5){\footnotesize{$n$}}
        \put(-44,+5){\footnotesize{$n$}}
        \put(-70,+70){\footnotesize{$k$}}
   \end{minipage}
   &=&\displaystyle\sum\limits_{j=0}^{n-1} C_{n,j}\hspace{1 mm}  
    \begin{minipage}[h]{0.06\linewidth}
        \vspace{0pt}
        \scalebox{0.18}{\includegraphics{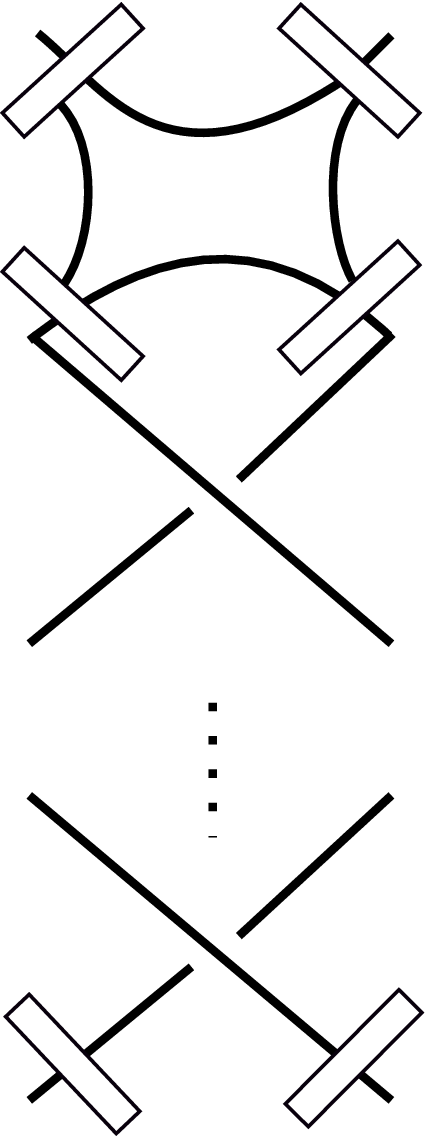}}
             \put(-0,+95){\footnotesize{$n$}}
        	\put(-44,+95){\footnotesize{$n$}}
        	\put(0,+5){\footnotesize{$n$}}
        \put(-44,+5){\footnotesize{$n$}}
        	\put(-34,+79){\tiny{$j$}}
   \end{minipage} \quad +\quad C_{n,n}\hspace{7 mm}  
    \begin{minipage}[h]{0.21\linewidth}
        \vspace{0pt}
        \scalebox{0.18}{\includegraphics{k-crossing}}
          \put(0,+100){\footnotesize{$n$}}
        \put(-44,+100){\footnotesize{$n$}}
         \put(0,+5){\footnotesize{$n$}}
        \put(-44,+5){\footnotesize{$n$}}
        \put(-82,+70){\footnotesize{$k-1$}}
   \end{minipage}\\
   &=&R(n,k)+A^{-n^2} \langle S_n(L_{k-1}) \rangle
\end{eqnarray*}
where
\begin{eqnarray}
R(n,k)=\displaystyle\sum\limits_{j=0}^{n-1} C_{n,j}\hspace{1 mm}  
    \begin{minipage}[h]{0.06\linewidth}
        \vspace{0pt}
        \scalebox{0.18}{\includegraphics{k-crossing_1}}
             \put(-0,+95){\footnotesize{$n$}}
        	\put(-44,+95){\footnotesize{$n$}}
        	\put(0,+5){\footnotesize{$n$}}
        \put(-44,+5){\footnotesize{$n$}}
        	\put(-34,+79){\tiny{$j$}}.
   \end{minipage}.
\end{eqnarray}
 Applying the fusion formula on the skein element that appears in $R(n,k)$, we obtain
\begin{eqnarray*}
R(n,k)=\displaystyle\sum\limits_{j=0}^{n-1}\displaystyle\sum\limits_{p=0}^{j} C_{n,j}\frac{\Delta_{2p}}{\theta(j,j,2p)}\hspace{1 mm}  
    \begin{minipage}[h]{0.06\linewidth}
        \vspace{0pt}
        \scalebox{0.2}{\includegraphics{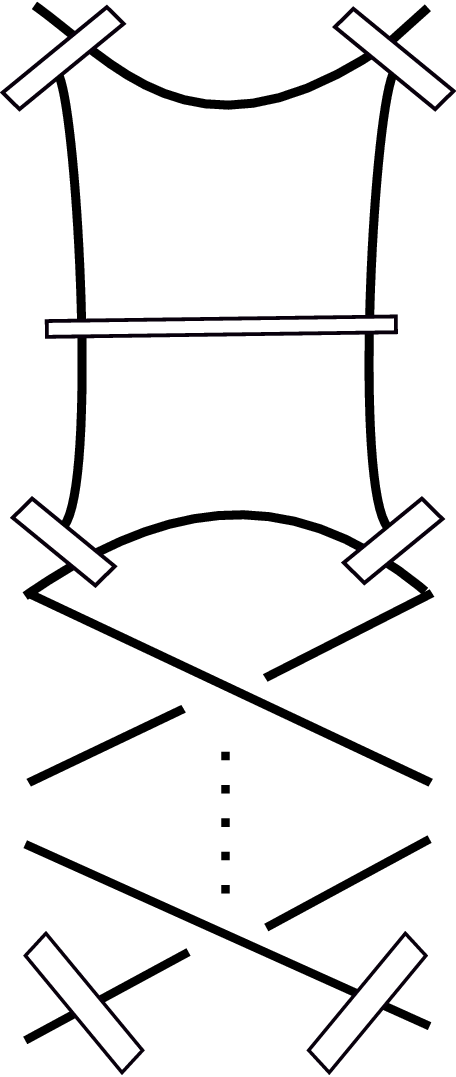}}
             \put(-0,+95){\footnotesize{$n$}}
        	\put(-50,+95){\footnotesize{$n$}}
        	\put(0,+5){\footnotesize{$n$}}
        \put(-50,+5){\footnotesize{$n$}}
        	\put(-31,+98){\tiny{$n-p$}}
        	\put(-31,+47){\tiny{$n-p$}}
        	\put(-44,+76){\tiny{$p$}}.
   \end{minipage}
\end{eqnarray*} 
Using the formula \ref{untwist}, we have 
\begin{eqnarray}
\label{finalR}
R(n,k)=\displaystyle\sum\limits_{j=0}^{n-1}\displaystyle\sum\limits_{p=0}^{j} C_{n,j}\frac{\Delta_{2p}}{\theta(j,j,2p)}(\mu_{2p}^{n,n})^{k-1}\hspace{1 mm}  
    \begin{minipage}[h]{0.06\linewidth}
        \vspace{0pt}
        \scalebox{0.3}{\includegraphics{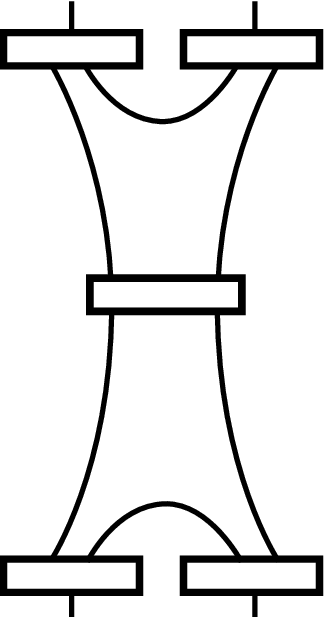}}
             \put(-5,+90){\footnotesize{$n$}}
        	\put(-44,+90){\footnotesize{$n$}}
        	\put(-12,+55){\footnotesize{$p$}}
        	\put(-32,+66){\tiny{$n-p$}}
   \end{minipage}
\end{eqnarray}
where
 \begin{equation*}
 \mu_{2p}^{n,n}=(-1)^{n-p}A^{n^2+2n-2p-2p^2}.
\end{equation*}
Now consider the equation that we derived earlier: 
\begin{equation}
\label{mainproof}
\langle S_n(L_{k}) \rangle=R(n,k)+A^{-n^2} \langle S_n(L_{k-1}) \rangle .
\end{equation}
We want to show that $R(n,k)$ does not contribute to the first $4n(k-1)+4$ terms of $\langle S_n(L_{k}) \rangle$. We show this by proving that the minimum degree of $R(n,k)$ is  $4n(k-1)+4$ higher than the minimum degree of $ A^{-n^2} \langle S_n(L_{k-1}) \rangle$.\\ 

Assume that the diagram $L_k$ has $c$ crossings. This implies that the number of crossings of the diagram $L_{k-1}$ is $c-1$. Moreover, $|s_-(L_{k-1})|=|s_-(L_{k})|$. Since $L_{k-1}$ is alternating, then $m( \langle S_n( L_{k-1}) \rangle )=-(c-1)n^2-2ns_-(L_k)$. Hence,
\begin{equation}
m(A^{-n^2} \langle S_n(L_{k-1})\rangle)= m(\langle S_n(L_k) \rangle). 
\end{equation}

 This implies that $\langle S_n(L_k) \rangle $ comes from the second summand which is to $A^{-n^2} \langle S_n(L_{k-1}) \rangle $.  Now, denote by $\Upsilon(n,p)$ the skein element that appears on the right hand side of equation (\ref{finalR}) and write

 \begin{eqnarray}\label{compli0}
 \label{compli}
    R(n,k)&=&C_{n,n-1}\frac{\Delta_{2(n-1)}}{\theta(n-1,n-1,2n-2)}(\mu_{2n-2}^{n,n})^{k-1} \Upsilon(n,n-1) \nonumber\\ &+&\displaystyle\sum\limits_{p=0}^{n-2} C_{n,n-1}\frac{\Delta_{2p}}{\theta(n-1,n-1,2p)}(\mu_{2p}^{n,n})^{k-1} \Upsilon(n,p) \nonumber \\ &+&\displaystyle\sum\limits_{j=0}^{n-2}\displaystyle\sum\limits_{p=0}^{j} C_{n,j}\frac{\Delta_{2p}}{\theta(j,j,2p)}(\mu_{2p}^{n,n})^{k-1} \Upsilon(n,p).
 \end{eqnarray} 

We claim that the minimum degree of $R(n,k)$ is coming from the first summand of (\ref{compli0}). To see this, first note that Lemma \ref{technical} implies
\begin{eqnarray*}
m(\Upsilon(n,p)) = -n^2(r - k)-2( s_-(L_k)n -(n-p)).
 \end{eqnarray*}
Moreover,
\begin{eqnarray*}
 C_{n,j}\frac{\Delta_{2p}}{\theta(j,j,2p)}(\mu_{2p}^{n,n})^{k-1}&=&(-1)^{-j - n + k n + 2 p - k p} A^{
2 j + 2 j^2 - 2 n - 4 j n + 2 k n + k n^2 - 2 k p + 2 p^2 - 2 k p^2}\\ &\times& \frac{ 
 (A^4; A^4)_{ j} (A^4; A^4)_{n} (A^4; A^4)_{1 + 2 p}}{
(A^4; A^4)_{-j + n} (A^4; A^4)_{j - p} (A^4; A^4)_{1 + j + p} (A^4; A^4)^2_{ p}}.
\end{eqnarray*}

\noindent
Hence,

\begin{eqnarray}
\label{vcomplicated}
m( C_{n,j}\frac{\Delta_{2p}}{\theta(j,j,2p)}(\mu_{2p}^{n,n})^{k-1} \Upsilon(n,p))&=&2 j + 2 j^2 - 4 j n + 2 k n + 2 k n^2 - 2 p - 2 k p + 2 p^2\nonumber\\ &-& 2 k p^2 - n^2 r - 2 n s_-(L_k).
\end{eqnarray}
\noindent
In particular,

\begin{eqnarray*}
m( C_{n,n-1}\frac{\Delta_{2(n-1)}}{\theta(n-1,n-1,2n-2)}(\mu_{2n-2}^{n,n})^{k-1} \Upsilon (n,n-1) )=- 2 n s_-(L_k) - n^2 r +  4n ( k-1)+4.
\end{eqnarray*}
Moreover, from (\ref{vcomplicated}) we see that
\begin{eqnarray*}
m( C_{n,n-1}\frac{\Delta_{2(n-1)}}{\theta(n-1,n-1,2n-2)}(\mu_{2n-2}^{n,n})^{k-1} \Upsilon (n,n-1) ) < m( C_{n,j}\frac{\Delta_{2p}}{\theta(j,j,2p)}(\mu_{2p}^{n,n})^{k-1} \Upsilon(n,p)), 
\end{eqnarray*}
when $j,p \neq n-1 $.
Thus,
\begin{eqnarray*}
m(R(n,k)) &=& m( C_{n,n-1}\frac{\Delta_{2(n-1)}}{\theta(n-1,n-1,2n-2)}(\mu_{2n-2}^{n,n})^{k-1} \Upsilon (n,n-1) )\\&=&- 2 n s_-(L_k) - n^2 r +  4n ( k-1)+4.
\end{eqnarray*}
 Since the minimum degree of the term $R(n,k)$ is higher than the minimum degree of $A^{-n^2} \langle  S_n ( L_{k-1} )\rangle $ by at least $4n( k-1)+4$ then by equation (\ref{mainproof}) we are done.
\end{proof}


 The following three Corollaries immediately follow from Theorem \ref{thm4.6} and they are analogous to Corollary \ref{1}, Theorem \ref{2} and Corollary \ref{3} respectively.

\begin{corollary}
Let $L=L_k$ be a reduced alternating link diagram with a marked maximal negative twist region with $k \geq 1$ crossings. Then the sequence $\{\tilde{J}_n(L_{k+i}) \}_{i \in \mathbb{N}}$ has a well-defined tail.
\end{corollary}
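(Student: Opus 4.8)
The plan is to deduce this Corollary directly from Theorem \ref{thm4.6} together with the change-of-variable formula (\ref{change of variable}) and the stability framework of Definition \ref{stab}. Theorem \ref{thm4.6} gives the key estimate
\[
\tilde{J}_n(L_{k})\doteq_{4n(k-1)+4}\tilde{J}_n(L_{k-1}),
\]
which, after reindexing $k \mapsto k+i$, says precisely that consecutive members of the sequence $\{\tilde{J}_n(L_{k+i})\}_{i\in\mathbb{N}}$ agree to higher and higher order as $i$ grows. First I would set $P_i(q) := J_{n}(L_{k+i})$ (the reduced colored Jones polynomial, obtained from $\tilde{J}_n$ via (\ref{change of variable})) and observe that dividing by the fixed factor $\Delta_{n-1}$ and substituting $A = q^{1/4}$ preserves $n$-equivalence up to relabeling the stabilization rate, since $\Delta_{n-1}$ does not depend on $k$ and contributes only a fixed shift in minimum degree.

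Next I would define the rate function $f(i) := 4n(k+i-1)+4$, which is strictly increasing in $i$, and verify the hypothesis of the criterion stated just after Definition \ref{stab}: a sequence $\mathcal{P}=\{P_i\}$ stabilizes with rate $f$ if and only if $P_i(q)\doteq_{f(i)}P_{i+1}(q)$ for all $i$. Applying Theorem \ref{thm4.6} with $k$ replaced by $k+i+1$ yields
\[
\tilde{J}_n(L_{k+i+1})\doteq_{4n(k+i)+4}\tilde{J}_n(L_{k+i}),
\]
so consecutive terms agree up to the required order. Passing through (\ref{change of variable}) converts each $\doteq_{m}$ relation on the unreduced invariants into the corresponding $\doteq$ relation on the $P_i$ (after accounting for the fixed $\Delta_{n-1}$ shift), which establishes the consecutive-agreement condition and hence, by the criterion, the existence of a well-defined tail $T_{\mathcal{P}}(q)\in\mathbb{Z}[[q]]$.

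The only genuine subtlety — and what I expect to be the main point requiring care rather than the main obstacle — is bookkeeping the effect of the normalization (\ref{change of variable}) on the stabilization rate. Dividing $\tilde{J}_n$ by $\Delta_{n-1}$ and reexpressing in $q=A^4$ rescales and shifts degrees, so the numerical rate $4n(k-1)+4$ on the $A$-side must be correctly translated into a rate on the $q$-side; because $\Delta_{n-1}$ is independent of the twisting parameter, this translation is uniform across the sequence and does not disrupt monotonicity of the rate. Once this is checked, the tail's independence from the chosen rate (noted after Definition \ref{stab}) guarantees that $T_{\mathcal{P}}(q)$ is well-defined, completing the argument. Everything reduces to citing Theorem \ref{thm4.6} and invoking the abstract stabilization criterion, so no new hard analysis is needed beyond this normalization check.
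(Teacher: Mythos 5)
Your argument is correct and takes essentially the same route as the paper, which simply asserts that this corollary follows immediately from Theorem \ref{thm4.6} via the consecutive-agreement criterion stated after Definition \ref{stab}, exactly as you do. One small remark: the corollary as stated concerns the \emph{unreduced} invariant $\tilde{J}_n$, so your detour through the normalization (\ref{change of variable}) and the division by $\Delta_{n-1}$ is superfluous here --- the reindexed form of Theorem \ref{thm4.6}, namely $\tilde{J}_n(L_{k+i+1})\doteq_{4n(k+i)+4}\tilde{J}_n(L_{k+i})$, already supplies the required relation directly for the sequence in question.
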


\begin{corollary}
\label{nice}
Let $L=L(k_1,\ldots,k_r)$ be a reduced alternating link diagram. Then
\begin{equation}
\langle S_n( L(k_1,\ldots, k_r))\rangle \doteq_{4n(k-1)+4} \langle S_n( L(k_1-1,\ldots, k_r-1) \rangle
\end{equation}
where $k=\min_{1 \leq i \leq r }(k_i)$.
\end{corollary}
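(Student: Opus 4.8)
The plan is to mirror the proof of Theorem~\ref{2}, reducing the multi-region statement to repeated application of the single-region result Theorem~\ref{thm4.6} and then composing the resulting $n$-equivalences. The only extra ingredient is the elementary transitivity of $\doteq_m$: if $P\doteq_a Q$ and $Q\doteq_b R$, then $P\doteq_{\min(a,b)}R$. This is immediate from the definition, since the first $\min(a,b)$ coefficients of $P$ already agree with those of $Q$, and the first $\min(a,b)$ coefficients of $Q$ already agree with those of $R$, all up to the common shift and sign.

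First I would lower the crossing numbers of the twist regions one at a time, forming the chain of reduced alternating diagrams
\[
L(k_1,\ldots,k_r)\ \to\ L(k_1-1,k_2,\ldots,k_r)\ \to\ \cdots\ \to\ L(k_1-1,\ldots,k_r-1).
\]
At the $i$-th step I regard the $i$-th twist region, which still carries $k_i\geq 1$ crossings, as the marked maximal negative twist region, and apply Theorem~\ref{thm4.6} to conclude
\[
\langle S_n(L(k_1-1,\ldots,k_{i-1}-1,k_i,k_{i+1},\ldots,k_r))\rangle \doteq_{4n(k_i-1)+4} \langle S_n(L(k_1-1,\ldots,k_i-1,k_{i+1},\ldots,k_r))\rangle .
\]
Composing these $r$ equivalences by the transitivity above yields
\[
\langle S_n(L(k_1,\ldots,k_r))\rangle \doteq_{m} \langle S_n(L(k_1-1,\ldots,k_r-1))\rangle,\qquad m=\min_{1\leq i\leq r}\bigl(4n(k_i-1)+4\bigr).
\]
Since $t\mapsto 4n(t-1)+4$ is increasing, the minimum is attained at $k=\min_i k_i$, giving $m=4n(k-1)+4$, which is the asserted rate.

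I expect the only real obstacle to be the bookkeeping needed to guarantee that every intermediate diagram in the chain is still a \emph{reduced alternating} diagram, so that the hypotheses of Theorem~\ref{thm4.6} hold at each stage. Lowering the crossing number of a single twist region by one preserves alternation (compare Figure~\ref{smoothings1}), and the subtle case is $k_i=1$, where the region collapses to a single negative smoothing and the number of twist regions drops; there one must check that no nugatory crossing is introduced and that the region marked at the next step is undisturbed. This is precisely the local diagrammatic verification already implicit in the two-region computation preceding Theorem~\ref{2}, so no genuinely new idea is required beyond the single-region Theorem~\ref{thm4.6} and the transitivity of $\doteq_m$.
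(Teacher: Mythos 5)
Your proof is correct and is essentially the paper's intended argument: the paper states that Corollary~\ref{nice} follows immediately from Theorem~\ref{thm4.6} in the same way that Theorem~\ref{2} follows from Theorem~\ref{new greatness}, namely by lowering the twist regions one at a time and composing the resulting $\doteq$-equivalences, with the rate given by the minimum $4n(k_i-1)+4$. Your extra remarks on transitivity of $\doteq_m$ and on the intermediate diagrams remaining reduced alternating only make explicit what the paper leaves implicit.
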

\begin{corollary}\label{Cor4.9}
Let $L=L(k_1,\ldots,k_r)$ be an alternating link diagram. Then the sequence $\{\tilde{J}_n(L(k_1+i,\ldots,k_r+i)) \}_{i \in \mathbb{N}}$ has a well-defined tail.
\end{corollary}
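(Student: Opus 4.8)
The plan is to deduce this directly from Corollary \ref{nice} together with the characterization of a well-defined tail recorded in the remark following Corollary \ref{1}, in exact parallel to the way Corollary \ref{3} was obtained from Theorem \ref{2} in the Jones polynomial case. Set $k=\min_{1\le j\le r}(k_j)$ and define $P_i := \tilde{J}_n\big(L(k_1+i,\ldots,k_r+i)\big)$ for $i\in\mathbb{N}$. By that remark, it suffices to exhibit an increasing function $f\colon\mathbb{N}\to\mathbb{N}$ with $P_i \doteq_{f(i)} P_{i+1}$ for all $i$: this is precisely what it means for $\{P_i\}$ to stabilize with rate $f$, and hence to possess a well-defined tail $T_{\mathcal{P}}(q)$ in the sense of Definition \ref{stab}.

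First I would apply Corollary \ref{nice} to the diagram $L(k_1+i+1,\ldots,k_r+i+1)$, whose smallest twist parameter is $\min_j(k_j+i+1)=k+i+1$. This gives
\begin{equation*}
\langle S_n(L(k_1+i+1,\ldots,k_r+i+1))\rangle \doteq_{4n(k+i)+4} \langle S_n(L(k_1+i,\ldots,k_r+i))\rangle .
\end{equation*}
Since $\tilde{J}_{n,L}\doteq \langle S_n(D)\rangle$ — the two differ only by the framing unit $\pm A^{e}$, which the relation $\doteq$ absorbs because it aligns the two series at their lowest-degree terms and permits a global sign — this relation transfers verbatim to the unreduced polynomials, yielding $P_{i+1}\doteq_{4n(k+i)+4} P_i$. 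Taking $f(i)=4n(k+i)+4$, which is strictly increasing in $i$ for $n\ge 1$, the existence of the tail follows at once. If instead one wants the statement phrased for the reduced $q$-series $J_{n+1}$ as in Corollary \ref{3}, I would then invoke the change of variable (\ref{change of variable}): dividing by the fixed Laurent polynomial $\Delta_n$ changes the number of matching leading coefficients by only a bounded amount, and because the nonzero $A$-powers occur in steps of $4$, the substitution $A=q^{1/4}$ (i.e.\ $A^4=q$) converts $\doteq_{4m}$ into $\doteq_m$; thus $f$ is merely rescaled to the still-increasing function $i\mapsto n(k+i)+1$, exactly the calibration already seen in passing from Theorem \ref{2} to Corollary \ref{rate}.

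The main point to check is not geometric or skein-theoretic at all — everything substantive is already packaged in Corollary \ref{nice} (and ultimately in Theorem \ref{thm4.6} and Lemma \ref{lemma4.4}). The only step demanding care is the bookkeeping of the change of variable: one must confirm that translating the coefficient count from the variable $A$ to the variable $q$ preserves the increasing and divergent character of the stabilization rate. Since a well-defined tail requires only that the number of agreeing coefficients grow without bound as $i\to\infty$, and $4n(k+i)+4\to\infty$ monotonically, this bookkeeping is routine and does not affect the conclusion.
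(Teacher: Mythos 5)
Your argument is correct and is essentially the paper's own: the paper derives this corollary immediately from Theorem \ref{thm4.6} via Corollary \ref{nice}, exactly as you do by applying Corollary \ref{nice} to the diagram with minimum twist parameter $k+i+1$ to obtain $P_{i+1}\doteq_{4n(k+i)+4}P_i$ and observing that this rate is increasing, which by the remark following Definition \ref{stab} yields a well-defined tail. The additional bookkeeping about dividing by $\Delta_n$ and passing to the variable $q$ is not needed here, since the statement concerns the unreduced polynomial $\tilde{J}_n$, but it is harmless and correctly anticipates the reduced version.
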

We will denote by $T_{n,L(k_1, \ldots,k_r)}$ the tail in the Corollary \ref{Cor4.9} associated with the alternating link diagram $L(k_1, \dots,k_r)$ and the Jones polynomial.
 
Finally, the following Corollary follows immediately from \ref{nice} and \ref{queen theorem}. 

\begin{corollary}
Let $L=L(k_1,\ldots,k_r)$ be an alternating link diagram. Then for every $n \geq 2$ the sequence $\{J_n(L(k_1+i,\ldots,k_r+i)) \}_{i \in \mathbb{N}}$ stabilizes with a rate $(n-1)k+i+1$ where $k=\min_{1 \leq j \leq r }(k_j)$.
\end{corollary}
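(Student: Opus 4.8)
The plan is to deduce the statement directly from the multi-twist estimate of Corollary \ref{nice} by transporting it through the normalization that turns the unreduced colored Jones polynomial into the reduced one. First I would invoke the characterization recorded just after Definition \ref{stab}: the sequence $\mathcal{P}=\{P_i\}_{i\in\mathbb{N}}$ with $P_i:=J_n(L(k_1+i,\ldots,k_r+i))$ stabilizes with rate $f$ if and only if $P_i\doteq_{f(i)}P_{i+1}$ for all $i$. Since $f(i)=(n-1)k+i+1$ is increasing in $i$, it therefore suffices to estimate how many of the lowest coefficients two consecutive members of the sequence share. For color $n\ge 2$ I set $m:=n-1\ge 1$ for the number of strands in the cabling, so that by the change of variable (\ref{change of variable}) one has $P_i(q)=\tilde{J}_{m}(L(k_1+i,\ldots,k_r+i))/\Delta_{m}\big|_{A=q^{1/4}}$.

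Next I would apply Corollary \ref{nice} (whose engine is Theorem \ref{queen theorem}) with the $m$-th Jones--Wenzl cabling to the two diagrams coming from the twist data $(k_1+i+1,\ldots,k_r+i+1)$ and $(k_1+i,\ldots,k_r+i)$. Because the minimum of the twist numbers of the larger diagram is $\min_j(k_j+i+1)=k+i+1$, Corollary \ref{nice} gives
\begin{equation*}
\langle S_{m}(L(k_1+i+1,\ldots,k_r+i+1))\rangle\doteq_{4m(k+i)+4}\langle S_{m}(L(k_1+i,\ldots,k_r+i))\rangle ,
\end{equation*}
that is, an agreement of the first $4m(k+i)+4$ coefficients in the variable $A$. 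Using $\tilde{J}_{m}\doteq\langle S_{m}(\cdot)\rangle$ this is the same relation for the unreduced colored Jones polynomials $\tilde{J}_m$ of the two diagrams.

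It then remains to push this $A$-level agreement through the normalization producing $J_n$. Dividing both sides by the fixed element $\Delta_{m}$ does not lower the matching order: the first $M$ coefficients of a quotient $U/\Delta_m$ are determined recursively by the first $M$ coefficients of $U$ (with $\Delta_m$ held fixed), and a global sign factors through, so $\doteq_{M}$ is preserved under division by $\Delta_m$. Finally I would substitute $A=q^{1/4}$. Since each $P_i(q)$ is an honest Laurent series in $q$, the normalized quantity $\tilde{J}_m(\cdot)/\Delta_m$ is supported on $A$-exponents lying in a single residue class modulo $4$; hence among any block of $4t$ consecutive $A$-coefficients exactly $t$ survive as consecutive $q$-coefficients, and an agreement $\doteq_{4t}$ in $A$ becomes an agreement $\doteq_{t}$ in $q$. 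Writing $4m(k+i)+4=4\big(m(k+i)+1\big)$ this yields $P_i(q)\doteq_{m(k+i)+1}P_{i+1}(q)$.

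Since $m=n-1$ and $m(k+i)+1=(n-1)k+(n-1)i+1\ge(n-1)k+i+1=f(i)$ for $n\ge 2$, consecutive terms agree at least to order $f(i)$, and the conclusion follows from the consecutive-terms characterization of stabilization (the argument in fact produces the slightly stronger rate $(n-1)(k+i)+1$). The one genuinely delicate point, and the step I would write out most carefully, is the bookkeeping in the third paragraph: namely that the quarter-power substitution contracts the coefficient index by the factor $4$ — which is exactly what forces the residue-class observation about the support and which reproduces the rate $k+i+1$ in the Jones case $n=2$ — together with the claim that dividing by the fixed $\Delta_m$ does not degrade the matching order. Everything else is a direct substitution into the already-established Corollary \ref{nice}.
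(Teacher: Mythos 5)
Your argument is correct and is exactly the derivation the paper has in mind when it says the corollary "follows immediately" from Corollary \ref{nice} and Theorem \ref{queen theorem}: apply Corollary \ref{nice} with cabling color $n-1$ to consecutive members of the sequence, then pass through division by $\Delta_{n-1}$ and the substitution $A=q^{1/4}$, which contracts the matching order by a factor of $4$ (the same contraction that turns $\doteq_{4(k+i+1)}$ into the rate $k+i+1$ in Corollary \ref{rate}). Your bookkeeping even recovers the slightly stronger rate $(n-1)(k+i)+1$, which dominates the stated $(n-1)k+i+1$, so the conclusion follows from the consecutive-terms characterization of stabilization.
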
 
 
\section{Connection with the Tail of the Colored Jones Polynomial}
\label{sec5}
The tail of the unreduced colored Jones polynomial of an alternating link $L$ is a $q$-series $T_L(q)$ that satisfies : \begin{equation}
T_L(q)\doteq \tilde{J}_n(L).
\end{equation}
See \cite{Armond,DL,GL} for more details. This follows from the fact that for every $n\geq 2$ one has \cite{Armond} :
\begin{equation}
\label{ColoredJonesTail}
\tilde{J}_{n}(L) \doteq_{4n} \tilde{J}_{n-1}(L).
\end{equation} It was proven in \cite{CodyOliver} that the tail of an alternating link with a reduced alternating link diagram $L$ depends only the reduced minus-graph of $\mathbb{G}_-^{\prime}(L)$. More precisely we state the following theorem.
\begin{theorem} \cite{Armond}
\label{cody thm2}
Let $L_1$ and $L_2$ be two alternating links with alternating diagrams $D_1$ and $D_2$. If the graph $\mathbb{G}_-^{\prime}(D_1)$ coincides with $\mathbb{G}_-^{\prime}(D_2)$, then $T_{L_1}=T_{L_2}$. 
\end{theorem}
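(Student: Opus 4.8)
The plan is to deduce the theorem from the twist-region stability proved in this paper, thereby reducing the statement to the combinatorial fact that a twist-reduced alternating diagram is determined by its reduced minus-graph. The key point is that our Theorem \ref{queen theorem} already encodes (and in fact reproves) the invariance of the tail under changing the number of twists in a maximal negative twist region, which is the geometric content behind collapsing parallel edges to a single edge.

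First I would show that $T_L(q)$ is unchanged when the number of crossings in one maximal negative twist region is decreased by one, provided a crossing remains. Fix a color $n$ and a reduced alternating diagram $L_k$ with $k\geq 2$. By Theorem \ref{queen theorem}, $\tilde J_n(L_k)\doteq_{4n(k-1)+4}\tilde J_n(L_{k-1})$, and since $4n(k-1)+4\geq 4n$ for $k\geq 2$ we obtain $\tilde J_n(L_k)\doteq_{4n}\tilde J_n(L_{k-1})$. On the other hand, equation (\ref{ColoredJonesTail}) shows that the tail satisfies $T_{L_k}(q)\doteq_{4n}\tilde J_n(L_k)$ and $T_{L_{k-1}}(q)\doteq_{4n}\tilde J_n(L_{k-1})$. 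Concatenating these three $n$-equivalences and using transitivity of $\doteq_{4n}$ gives $T_{L_k}(q)\doteq_{4n}T_{L_{k-1}}(q)$ for every $n$, hence $T_{L_k}(q)\doteq T_{L_{k-1}}(q)$. By induction, every maximal negative twist region of a reduced alternating diagram may be shrunk down to a single crossing without changing the tail.

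Next I would pass to a normal form. Given $D_1$ and $D_2$, apply the previous step to each of their maximal negative twist regions, producing twist-reduced diagrams $\widetilde D_1$ and $\widetilde D_2$ with $T_{L_i}\doteq T_{\widetilde D_i}$ in which every maximal twist region carries exactly one crossing. In a reduced alternating diagram the crossings of a maximal negative twist region are precisely those that produce a family of parallel edges joining a fixed pair of circles of the all-negative state $s_-$; once every twist region is a single crossing, the minus-graph therefore has no parallel edges and $\mathbb{G}_-(\widetilde D_i)=\mathbb{G}_-^{\prime}(\widetilde D_i)=\mathbb{G}_-^{\prime}(D_i)$. By hypothesis $\mathbb{G}_-^{\prime}(D_1)=\mathbb{G}_-^{\prime}(D_2)$, so $\mathbb{G}_-(\widetilde D_1)=\mathbb{G}_-(\widetilde D_2)$ as plane graphs. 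Since a reduced alternating diagram is recovered from its minus-graph (a checkerboard/Tait graph) by the medial-graph construction — the designation of the state as all-negative fixing the crossing convention and the absence of nugatory crossings guaranteeing that the graph has no bridges — the two twist-reduced diagrams coincide. Hence $T_{\widetilde D_1}\doteq T_{\widetilde D_2}$, and therefore $T_{L_1}\doteq T_{\widetilde D_1}\doteq T_{\widetilde D_2}\doteq T_{L_2}$, which is the desired equality of tails.

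The analytic input of the first paragraph is immediate from Theorem \ref{queen theorem}, so the main obstacle is the combinatorial bookkeeping of the second paragraph. Specifically, one must verify carefully that in a reduced alternating diagram every maximal collection of parallel edges of $\mathbb{G}_-(D)$ arises from a single maximal negative twist region — so that shrinking all twist regions genuinely removes all multi-edges and realizes $\mathbb{G}_-^{\prime}(D)$ — and that the planar reduced minus-graph determines the twist-reduced diagram uniquely up to isotopy. This is exactly where the medial-graph correspondence and the reducedness hypothesis are essential; handling the planarity and the all-negative crossing convention correctly (to rule out a spurious mirror-image ambiguity, noting that mirroring would replace $\mathbb{G}_-$ by the dual graph $\mathbb{G}_+$) is the delicate part of the argument.
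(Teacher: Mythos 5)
First, note that the paper does not prove this statement; it is quoted from Armond--Dasbach, whose argument works directly on the skein element built from the all-negative state, showing that \emph{any} family of parallel edges of $\mathbb{G}_-(D)$ can be collapsed to a single edge without changing the tail. Your plan instead tries to derive the theorem from Theorem \ref{queen theorem}, and this is where the genuine gap lies: in a reduced alternating diagram, a maximal family of parallel edges of $\mathbb{G}_-(D)$ need \emph{not} arise from a single maximal negative twist region. Two crossings contribute parallel edges whenever their smoothings join the same pair of circles of $s_-(D)$, whereas they lie in a common twist region only when they cobound a chain of bigons. For a concrete counterexample, take the plane graph with vertices $A,B$ joined by a double edge $e_1,e_2$, a vertex $C$ joined to $A$ by a double edge placed in one of the two faces bounded by $e_1\cup e_2$, and a vertex $D$ joined to $B$ by a double edge placed in the other face; the associated reduced alternating diagram has the crossings coming from $e_1$ and $e_2$ parallel in $\mathbb{G}_-$ but separated into distinct one-crossing twist regions, because the would-be bigons between them are subdivided by the $C$- and $D$-tangles. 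Consequently, after you shrink every twist region to a single crossing you do \emph{not} obtain $\mathbb{G}_-=\mathbb{G}_-^{\prime}$, and your normal form does not realize the reduced graph. What your first paragraph actually establishes (and establishes correctly --- the chain $T_{L_k}\doteq_{4n}\tilde J_n(L_k)\doteq_{4n}\tilde J_n(L_{k-1})\doteq_{4n}T_{L_{k-1}}$ for all $n$ is fine) is invariance of the tail under changes in twist regions, i.e.\ essentially Theorem \ref{codythm}; but the logical dependence in the paper runs the other way, with Theorem \ref{cody thm2} being the stronger statement that \emph{implies} the twist-region version.

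A secondary, also unresolved, issue is your final reconstruction step. Even in the situation where $\mathbb{G}_-(\widetilde D_i)=\mathbb{G}_-^{\prime}(D_i)$, the hypothesis that the reduced graphs ``coincide'' gives you at best an isomorphism of graphs (or of plane graphs); a planar graph generally admits inequivalent plane embeddings, and different embeddings produce alternating diagrams related by flypes or mutations, which are \emph{not} literally the same diagram. So the conclusion $\widetilde D_1=\widetilde D_2$ does not follow, and one would still need an argument that the tail is insensitive to the choice of embedding --- which is exactly the kind of state-graph-level argument the cited proof supplies and your reduction avoids. You flag both difficulties at the end of your write-up, but they are not bookkeeping: the first one is a counterexample to the stated combinatorial claim, so the proof as proposed does not go through.
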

Given a link diagram $L=L(k_1,\ldots,k_r)$, Theorem \ref{cody thm2} implies that adding  negative twists to the $r$ labeled twists in $L$ does not change the tail $T_L(q)$. More precisely we can restate theorem \ref{cody thm2} in terms of our notation :

\begin{theorem}
\label{codythm}
Let $L=L(k_1,\ldots,k_r)$ be a reduced alternating link diagram. Then for every $n\geq 2$ we have  
\begin{equation}
\langle S_n({L(k_1,\ldots, k_r)}) \rangle \doteq_{4n} \langle S_{n-1}({L(k_1+b_1,\ldots, k_r + b_r)}\rangle
\end{equation}
where $b_i \in \mathbb{Z}$ such that $ k_i+ b_i\geq 1 $ for $1\leq i \leq r$.
\end{theorem}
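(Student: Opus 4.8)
The plan is to reduce the statement to the two facts already available in the paper: the graph-dependence of the tail (Theorem \ref{cody thm2}) and the standard one-step stability \eqref{ColoredJonesTail}. Throughout I would write $L=L(k_1,\dots,k_r)$ and $L'=L(k_1+b_1,\dots,k_r+b_r)$, and use $\langle S_m(D)\rangle \doteq \tilde J_m(D)$, so that it suffices to prove $\tilde J_n(L)\doteq_{4n}\tilde J_{n-1}(L')$.

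First I would check that $L'$ is again a reduced alternating diagram: since every $k_i\ge 1$ and $k_i+b_i\ge 1$, each marked region remains a genuine maximal negative twist region with at least one crossing, so no nugatory crossing is created and alternation is preserved (in particular we never fall into the degenerate $k_i+b_i=0$ case that would require replacing a region by its negative smoothing). The key diagrammatic observation is then that $\mathbb{G}^{\prime}_{-}(L)=\mathbb{G}^{\prime}_{-}(L')$. Indeed, in the all-negative state each maximal negative twist region opens into two parallel strands, and its $k_i$ crossings contribute $k_i$ edges joining the same pair of vertices (the two circles carrying those strands) in $\mathbb{G}_{-}$. Passing to the reduced graph collapses these parallel edges to a single edge, independently of the number of crossings; hence replacing $k_i$ by $k_i+b_i$ leaves $\mathbb{G}^{\prime}_{-}$ unchanged. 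By Theorem \ref{cody thm2} this yields the equality of tails $T_L=T_{L'}$.

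The remaining work is rate bookkeeping. From \eqref{ColoredJonesTail}, $\tilde J_m \doteq_{4m}\tilde J_{m-1}$ for all $m\ge 2$, and since the rate $4m$ is increasing, iterating and using transitivity shows that for a fixed color the polynomials $\tilde J_{m-1},\tilde J_m,\tilde J_{m+1},\dots$ all agree in their first $4m$ coefficients; letting the color grow, the tail therefore satisfies $T\doteq_{4m}\tilde J_{m-1}$. Applying this with $m=n$ to both diagrams and invoking $T_L=T_{L'}$, transitivity of $\doteq_{4n}$ gives $\tilde J_{n-1}(L)\doteq_{4n}\tilde J_{n-1}(L')$. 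Finally, combining with $\tilde J_n(L)\doteq_{4n}\tilde J_{n-1}(L)$ (again \eqref{ColoredJonesTail} at $m=n$, valid since $n\ge 2$) produces $\tilde J_n(L)\doteq_{4n}\tilde J_{n-1}(L')$, which is the assertion after translating back through $\langle S_m(D)\rangle\doteq\tilde J_m(D)$.

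I expect the main obstacle to be twofold, and in both cases a matter of care rather than depth: pinning down the exact constant $4n$ so that the tail-approximation rate and the one-step stability rate line up (this needs the monotonicity of the rates $4m$, transitivity of $\doteq_{4n}$, and attention to the fact that $\doteq_{4n}$ allows only an overall sign), and giving a clean justification that inserting crossings inside a negative twist region alters only the multiplicity of a single edge of $\mathbb{G}_{-}$ and hence nothing in $\mathbb{G}^{\prime}_{-}$. Once these are settled the statement follows formally; unlike the proof of Theorem \ref{queen theorem}, there is no minimal-degree estimate or adequacy argument to carry out here, since Theorem \ref{cody thm2} supplies the essential content.
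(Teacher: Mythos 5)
Your proposal is correct and follows essentially the same route as the paper, which presents this theorem as a direct restatement of Theorem \ref{cody thm2} combined with the stability \eqref{ColoredJonesTail}. In fact you supply more detail than the paper does (the invariance of $\mathbb{G}^{\prime}_{-}$ under changing twist multiplicities and the transitivity/rate bookkeeping for $\doteq_{4n}$), and both of those steps are carried out correctly.
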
 
The rate of stabilization $4n$ is maximal for the sequence $\{\tilde{J}_n(L) \}_{n\in \mathbb{N}}$ where $L$ is an alternating link. This can be seen by considering the coefficient of the colored Jones polynomial of figure-eight knot. Theorem \ref{codythm} also implies the following:
\begin{theorem}
\label{cody_us}
Let $L=L(k_1, \ldots, k_r) $ be a reduced alternating diagram. 
Then the sequence 
$\{ \tilde{J}_{n+i} (L( k_1 +i, \ldots, k_r +i) ) \}_{i\in {\mathbb N}} $ 
has a well-defined tail.
\end{theorem}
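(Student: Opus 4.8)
The plan is to reduce the assertion to the consecutive-agreement criterion recorded just after Definition \ref{stab}: writing
$$P_i := \tilde{J}_{n+i}\bigl(L(k_1+i,\ldots,k_r+i)\bigr), \qquad k_0 := \min_{1\le \ell\le r} k_\ell,$$
the sequence $\mathcal{P}=\{P_i\}_{i\in\mathbb N}$ has a well-defined tail as soon as one exhibits an increasing function $f$ with $P_i \doteq_{f(i)} P_{i+1}$ for all $i$. The idea is to pass from $P_{i+1}$ to $P_i$ in two independent moves, each already available in the paper: first lower the color by one using Theorem \ref{codythm}, then lower every twist by one using Corollary \ref{nice}. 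Throughout I use $\langle S_m(D)\rangle \doteq \tilde{J}_m(L)$ and the fact that enlarging the negative twist regions of a reduced alternating diagram keeps it reduced and alternating, so every intermediate diagram meets the hypotheses of the invoked results.

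For the color step I apply Theorem \ref{codythm} to $L(k_1+i+1,\ldots,k_r+i+1)$ at color $n+i+1$ with the trivial shift $b_\ell=0$ (admissible since $k_\ell+i+1\ge 1$ and $n+i+1\ge 2$), obtaining
\begin{equation*}
\langle S_{n+i+1}\bigl(L(k_1+i+1,\ldots,k_r+i+1)\bigr)\rangle \doteq_{4(n+i+1)} \langle S_{n+i}\bigl(L(k_1+i+1,\ldots,k_r+i+1)\bigr)\rangle .
\end{equation*}
For the twist step I apply Corollary \ref{nice} at the fixed color $n+i$ to the same diagram, whose minimal twist is $k_0+i+1$; decrementing every twist by one gives
\begin{equation*}
\langle S_{n+i}\bigl(L(k_1+i+1,\ldots,k_r+i+1)\bigr)\rangle \doteq_{4(n+i)(k_0+i)+4} \langle S_{n+i}\bigl(L(k_1+i,\ldots,k_r+i)\bigr)\rangle .
\end{equation*}
The left-hand side of the first display is $\doteq P_{i+1}$ and the right-hand side of the second is $\doteq P_i$, so by transitivity of $\doteq_m$ (which passes to the smaller of the two depths, the global sign being absorbed consistently) I conclude $P_{i+1}\doteq_{f(i)} P_i$ with $f(i)=\min\{\,4(n+i+1),\,4(n+i)(k_0+i)+4\,\}$.

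Finally I observe that the color step is the binding constraint: since $k_0\ge 1$ and $i\ge 0$ we have $k_0+i\ge 1$, whence $4(n+i)(k_0+i)+4 \ge 4(n+i)+4 = 4(n+i+1)$, so $f(i)=4(n+i+1)$. This $f$ is strictly increasing and tends to infinity, so the criterion after Definition \ref{stab} yields a well-defined tail; the rate $4(n+i+1)$ is inherited from the colored Jones tail stability $\tilde{J}_m\doteq_{4m}\tilde{J}_{m-1}$ of equation (\ref{ColoredJonesTail}). I expect no genuine difficulty here: the only point requiring care is the bookkeeping after the simultaneous substitution $n\mapsto n+i$, $k_\ell\mapsto k_\ell+i$, namely checking the hypotheses of the two cited results for all $i$ and verifying that the quadratic twist rate dominates the linear color rate so that the combined rate is exactly $4(n+i+1)$. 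The genuinely useful observation is simply that the diagonal increment factors into an independent color decrement (Theorem \ref{codythm}) and twist decrement (Corollary \ref{nice}).
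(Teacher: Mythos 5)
Your proof is correct, but it takes a slightly different (and more roundabout) route than the one the paper intends. The paper derives this theorem from Theorem \ref{codythm} alone: that theorem already allows an \emph{arbitrary} twist shift $b_\ell$ when the color is lowered by one, so a single application to $L(k_1+i+1,\ldots,k_r+i+1)$ at color $n+i+1$ with $b_\ell=-1$ gives $P_{i+1}\doteq_{4(n+i+1)}P_i$ in one move, with no need for Corollary \ref{nice}. You instead take $b_\ell=0$ (which is just the ordinary tail stability of equation (\ref{ColoredJonesTail})) and then perform the twist decrement separately via Corollary \ref{nice}, paying the price of a $\min$ of two rates. Your bookkeeping is right: the twist rate $4(n+i)(k_0+i)+4$ dominates the color rate $4(n+i+1)$ because $k_0+i\ge 1$, so the combined rate is again $4(n+i+1)$ and the consecutive-agreement criterion after Definition \ref{stab} applies. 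What your factorization buys is a cleaner accounting of which of the two stabilization mechanisms is the binding constraint (the color, not the twists); what it costs is an extra invocation of the quadratic twist-rate result that the paper's one-step argument makes unnecessary. Either way the conclusion and the rate are identical.
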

\section{Open Questions}\label{sec6}
The tail of the colored Jones polynomial satisfy certain product structures \cite{CodyOliver,Hajij2}. Furthermore, it has found multiple connections with number theory \cite{CodyOliver,GL,Hajij1,Hajij2,Hajij3}. These properties and connections are yet to be addressed for the $q$-series that we introduced here. In \cite{Roz} a categorified version of the tail of the colored Jones polynomial was given. The result in \cite{Roz} is basically a catergofication of Theorem \ref{codythm}. It is an interesting question whether there is a similar categorification for the Theorems \ref{2} and \ref{Cor4.9}. 

Our calculations show that there are higher levels of stability for the coefficients of the colored Jones polynomial of alternating link diagrams. In other words, if we subtract the stabilized tail from the shifted colored Jones polynomials of alternating diagram we obtain another sequence of $q$-series whose coefficients stabilize in the sense of Theorem \ref{nice}. The process of subtracting can be iterated to obtain higher stabilities. See for example \cite{Katie}. We conjecture that this stability holds for all higher order coefficients for all alternating links. Finally, we conjecture that this stability also occur for other quantum invariants.

\
{\it Acknowledgement:}
The authors are grateful to W.E.~Clark for suggestions. We also thank A.~Champanerkar and I.~Kofman for valuable comments. M. Saito was partially supported by NIH R01GM109459.

\end{document}